\documentclass{article}
\usepackage{amsmath,amssymb,amsthm,color,graphicx,diagbox,colortbl,array}
\usepackage{tikz}
\usepackage{setspace}
\usepackage{bussproofs}
\usepackage{multicol}
\usepackage{subfiles}

\theoremstyle{plain}
\newtheorem{thm}{Theorem}[section]
\newtheorem*{thm*}{Theorem}
\newtheorem{lem}[thm]{Lemma}
\newtheorem{prop}[thm]{Proposition}
\newtheorem*{prop*}{Proposition}
\newtheorem{cor}[thm]{Corollary}

\newtheorem*{fact*}{Fact}

\newtheorem*{prob*}{Problem}
\newtheorem*{cl*}{Claim}

\newtheorem*{scl*}{Subclaim}

\theoremstyle{definition}
\newtheorem{defn}[thm]{Definition}
\newtheorem{ex}[thm]{Example}
\newtheorem{rem}[thm]{Remark}

\newcommand{\limp}{\to}

\newcommand{\proves}{\vdash}

\newcommand{\forces}{\Vdash}

\newcommand{\PropVar}{\mathrm{Prop}}
\newcommand{\FmlPL}{\mathrm{Fml}_\mathrm{P}}
\newcommand{\FmlML}{\mathrm{Fml}_\mathrm{M}}

\newcommand{\Vis}{\mathcal{V}}
\newcommand{\lconj}{\bigwedge}
\newcommand{\ldisj}{\bigvee}

\newcommand{\Logic}[1]{\mathbf{#1}}
\newcommand{\LogicGL}{\Logic{GL}}
\newcommand{\LogicD}{\Logic{D}}
\newcommand{\LogicS}{\Logic{S}}
\newcommand{\LogicK}{\Logic{K}}
\newcommand{\LogicKFour}{\Logic{K4}}
\newcommand{\LogicSFour}{\Logic{S4}}

\newcommand{\LogicInt}{\Logic{Int}}
\newcommand{\LogicBPL}{\Logic{BPL}}
\newcommand{\LogicFPL}{\Logic{FPL}}
\newcommand{\LogicDPL}{\Logic{DPL}}
\newcommand{\LogicSPL}{\Logic{SPL}}

\newcommand{\Theory}[1]{\mathbf{#1}}
\newcommand{\TheoryPA}{\Theory{PA}}
\newcommand{\TheoryTA}{\Theory{TA}}

\newcommand{\seqI}{\Rightarrow^1}
\newcommand{\seqII}{\Rightarrow^2}
\newcommand{\seqIII}{\Rightarrow^3}
\newcommand{\seq}[1]{\mathrel{\Rightarrow^{#1}}}
\newcommand{\lev}{\ell}
\newcommand{\Inv}[1]{{#1}^{-1}}

\newcommand{\RAx}[1]{(\mathrm{Ax}_{#1})}
\newcommand{\RBotL}[1]{({\bot}\mathrm{L}_{#1})}
\newcommand{\RWL}[2]{(\mathrm{WL}^{#1}_{#2})}
\newcommand{\RWR}[2]{(\mathrm{WR}^{#1}_{#2})}
\newcommand{\RImpL}[2]{({\limp}\mathrm{L}^{#1}_{#2})}
\newcommand{\RImpR}[2]{({\limp}\mathrm{R}^{#1}_{#2})}
\newcommand{\RAndL}[2]{({\land}\mathrm{L}^{#1}_{#2})}
\newcommand{\RAndR}[2]{({\land}\mathrm{R}^{#1}_{#2})}
\newcommand{\ROrL}[2]{({\lor}\mathrm{L}^{#1}_{#2})}
\newcommand{\ROrR}[2]{({\lor}\mathrm{R}^{#1}_{#2})}
\newcommand{\RBoxL}[2]{(\Box\mathrm{L}^{#1}_{#2})}
\newcommand{\RBoxGL}[2]{(\Box\LogicGL^{#1}_{#2})}
\newcommand{\RBoxKFour}[2]{(\Box\LogicKFour^{#1}_{#2})}
\newcommand{\RLift}[2]{(\mathrm{LU}^{#1}_{#2})}
\newcommand{\RCut}[2]{(\mathrm{Cut}^{#1}_{#2})}

\newcommand{\pagecenter}[1]{%
  \par\addvspace{\abovedisplayskip}%
  \noindent\makebox[\textwidth][c]{#1}%
  \par\addvspace{\belowdisplayskip}%
}

\newcommand{\Gentzen}[1]{\mathsf{G}_{#1}}
\newcommand{\GentzenS}{\Gentzen{\LogicS}}
\newcommand{\GentzenD}{\Gentzen{\LogicD}}
\newcommand{\GentzenSPL}{\Gentzen{\LogicSPL}}
\newcommand{\GentzenDPL}{\Gentzen{\LogicDPL}}

\title{Embeddings of Propositional Logics into the Provability Logics $\LogicS$ and $\LogicD$}
\author{Mashu Noguchi\footnote{Email: me@sno2wman.net}
\footnote{Graduate School of System Informatics, Kobe University, 1-1 Rokkodai, Nada, Kobe 657-8501, Japan.}}
\date{}

\begin{document}

\maketitle

\begin{abstract}
  Just as Visser showed that the formal propositional logic $\LogicFPL$ can be embedded into Gödel-Löb provability logic $\LogicGL$,
  Petrukhin proposed a propositional logic $\LogicSPL$ that can be embedded into Solovay's non-normal provability logic $\LogicS$.
  In this paper, we fix Petrukhin's proof and
  extend the result to Japaridze's provability logic $\LogicD$,
  and propose a propositional logic $\LogicDPL$ that can be embedded into $\LogicD$.
\end{abstract}

\section{Introduction}

In 1976, Solovay \cite{Sol76} proved the arithmetical completeness theorem, which is a fundamental landmark result in provability logic.
More precisely, he proved that a modal formula is provable in the logic $\LogicGL$ if and only if
every arithmetical sentence, obtained from the modal formula by interpreting the propositional variables as arithmetical sentences
and the modal operator $\Box$ as an appropriate provability predicate $\mathrm{Pr}_{\TheoryPA}(x)$ of $\TheoryPA$, is provable in $\TheoryPA$.
As an extension of this result, he also showed that a non-normal extension of $\LogicGL$, known as $\LogicS$, is arithmetically complete with respect to
true arithmetic $\TheoryTA$.
Furthermore, Japaridze (also transliterated as Dzhaparidze) discovered a provability logic $\LogicD$
that is strictly intermediate between $\LogicGL$ and $\LogicS$ in his thesis \cite{Jap86}.
The logic $\LogicD$ is the provability logic of $\TheoryPA$ relative to $\TheoryPA + \mathrm{Rfn}_{\TheoryPA}(\Sigma_1)$,
that is, any modal formula is provable in $\LogicD$ if and only if
every arithmetical sentence obtained from the formula by above interpretation is provable in $\TheoryPA + \mathrm{Rfn}_{\TheoryPA}(\Sigma_1)$.
Here, $\mathrm{Rfn}_{\TheoryPA}(\Sigma_1)$ denotes the reflection principle for $\Sigma_1$-sentences.
Subsequently, Beklemishev \cite{Bek87} proved the classification theorem for provability logics,
which states that every provability logic must belong to one of four families of logics,
each being an appropriate extension of one of $\LogicGL$, $\LogicS$, or $\LogicD$.
For more results and detailed discussions about $\LogicD$ and the classification theorem,
see Beklemishev's papers \cite{Bek87,Bek89,Bek90} or the survey by Artemov and Beklemishev \cite{AB05}.


Sequent calculi for provability logics have been well studied, particularly for $\LogicGL$ \cite{Lei81, SV82, Val83, Bor83, Avr84, Moe01, GR12, Bri16, GRS21}.
Kushida \cite{Kus20} proposed a new sequent calculus for $\LogicS$, which has two levels of sequents $\seqI$ and $\seqII$,
and proved cut-elimination for the calculus.
Roughly speaking, sequents with $\seqI$ correspond to provability in $\LogicGL$ and sequents with $\seqII$ correspond to provability in $\LogicS$.
There is a lift-up rule from $\seqI$ to $\seqII$, but not vice versa.
Extending this approach, Kashima et al.\ \cite{KKIM25} proposed a new sequent calculus for $\LogicD$,
which has three levels of sequents $\seqI$, $\seqII$ and $\seqIII$.
Sequents with $\seqI$ and $\seqII$ correspond to provability in $\LogicGL$ and $\LogicS$, respectively, and sequents with $\seqIII$ correspond to provability in $\LogicD$.

Another well-known result, due to Gödel, McKinsey and Tarski \cite{God33,MT48},
is that provability in propositional intuitionistic logic $\LogicInt$ can be interpreted as provability in the modal logic $\LogicSFour$
via a translation known as Gödel translation.
When such a correspondence holds, the modal logic is called a \emph{modal companion} of the propositional logic
(for instance, $\LogicSFour$ is a modal companion of $\LogicInt$).
It is well known which modal logics are the modal companions of logics stronger than $\LogicInt$, that is, of superintuitionistic logics
(see \cite{CZ97}).
On the other hand, it has also been investigated which propositional logics have modal companions weaker than or incomparable to $\LogicSFour$.
Visser \cite{Vis81} found two propositional logics whose modal companions, via a modified Gödel translation, are $\LogicKFour$ and $\LogicGL$, respectively,
and named them Basic Propositional Logic $\LogicBPL$ and Formal Propositional Logic $\LogicFPL$.
Simple sequent calculi for $\LogicBPL$ and $\LogicFPL$ were proposed by Ishii et al.\ \cite{IKK01}.
Yamasaki and Sano \cite{YS17} modified their sequent calculus for $\LogicBPL$ into a G3-style sequent calculus
and investigated its proof-theoretic properties.
Moreover, they showed the embedding of $\LogicBPL$ into $\LogicKFour$ via the modified Gödel translation syntactically
by means of analytic proofs.
We also note that, in the direction opposite to ours, Chen \cite{Che21} discussed such an embedding for the logic $\Logic{F}$ introduced by Corsi \cite{Cor87}, which is weaker than $\LogicBPL$ and corresponds to the modal logic $\LogicK$.

Petrukhin \cite{Pet23} combined the idea of Kushida's two-level sequent calculus for $\LogicS$ with a sequent calculus for $\LogicFPL$,
proposing a sequent calculus for a propositional logic with two levels of sequents,
and he named the propositional logic obtained from this sequent calculus Solovay Propositional Logic $\LogicSPL$.
Furthermore, by employing the method of Yamasaki and Sano,
he claims that the sequent calculus for $\LogicSPL$ can be syntactically embedded into Kushida's sequent calculus for $\LogicS$ via the modified Gödel translation,
that is, $\LogicS$ is a modal companion of $\LogicSPL$.
However, Petrukhin's proof appears to contain some errors,
and it seems that straightforward corrections are insufficient to fix them.

This paper is devoted to two objectives.
The first is to provide a complete and detailed proof from scratch that $\LogicS$ is the modal companion of $\LogicSPL$,
and in the course of doing so, to identify the errors in Petrukhin's paper.
For the sequent calculus of $\LogicS$, we use not Kushida's original calculus
but a further modification of the one corrected by Kashima et al.\ \cite{KK23}.
The second objective is to define a sequent calculus for a propositional logic,
which we name Dzhaparidze Propositional Logic $\LogicDPL$.
The sequent calculus for $\LogicDPL$ has three levels of sequents,
and is obtained by adding to the sequent calculus for $\LogicSPL$ one rule which arises naturally from the sequent calculus for $\LogicD$.
Then we show, by a slight adaptation of the proof we gave for $\LogicSPL$,
that $\LogicDPL$ can be embedded into $\LogicD$, that is, $\LogicD$ is a modal companion of $\LogicDPL$.

This paper is organized as follows.
In Section \ref{sect:preliminaries}, we introduce some notations and definitions that are used throughout the paper and mention some basic facts.
In Section \ref{sect:GentzenS}, we define a sequent calculus $\GentzenS$ for $\LogicS$
and show some syntactic results from \cite{Kus20,KK23}.
In Section \ref{sect:GentzenSPL}, we define a sequent calculus $\GentzenSPL$ for $\LogicSPL$.
In Section \ref{sect:Embedding_GentzenSPL_GentzenS}, we show that $\GentzenSPL$ can be embedded into $\GentzenS$ via the modified Gödel translation.
Additionally, we mention some errors in Petrukhin's proof in Section \ref{sect:errors_in_Petrukhin}.
In Section \ref{sect:GentzenD}, we define a sequent calculus $\GentzenD$ for $\LogicD$ from \cite{KKIM25}.
In Section \ref{sect:GentzenDPL}, we define a sequent calculus $\GentzenDPL$ for $\LogicDPL$.
In Section \ref{sect:Embedding_GentzenDPL_GentzenD}, we show that $\GentzenDPL$ can be embedded into $\GentzenD$ via the modified Gödel translation
by modifying the proof in Section \ref{sect:Embedding_GentzenSPL_GentzenS}.
Finally, we give some concluding remarks and open problems in Section \ref{sect:conclusion}.

\section{Preliminaries} \label{sect:preliminaries}

In this section, we introduce some notations and definitions that are used throughout the paper and mention some basic facts.
For more details, refer to the standard textbooks such as \cite{Boo94, CZ97}.

Propositional variables are denoted by lowercase letters $p, q, r, \ldots$, and the set of all propositional variables is denoted by $\PropVar$.
Formulas of propositional logic are constructed from propositional variables by the logical connectives $\bot, \land, \lor, \limp$.
Formulas of modal logic are constructed from propositional variables by $\bot, \land, \lor, \limp$ and the modal operator $\Box$.
Other connectives are defined as usual.
Formulas of both propositional logic and modal logic are denoted by uppercase letters $A, B, C, \ldots$.
We write $\FmlPL$ for the set of all formulas of propositional logic, and $\FmlML$ for the set of all formulas of modal logic.
For our purpose, uppercase Greek letters $\Gamma, \Delta, \ldots$ denote \emph{sets} of formulas.
For a set $\Gamma$ of modal formulas, we write $\Box\Gamma$ for the set $\{\Box A : A \in \Gamma\}$.
Moreover, for a finite set $\Gamma$, we write $\lconj \Gamma$ and $\ldisj \Gamma$ for the conjunction and the disjunction of all elements of $\Gamma$,
respectively, where the empty conjunction $\lconj \emptyset$ and the empty disjunction $\ldisj \emptyset$ are understood as $\top$ and $\bot$.

Modal logic $\LogicKFour$ is the smallest normal modal logic containing the axiom $\Box A \limp \Box \Box A$,
and Gödel-Löb provability logic $\LogicGL$ is the smallest normal modal logic containing the axiom $\Box(\Box A \limp A) \limp \Box A$.
Solovay's modal logic $\LogicS$ is a non-normal extension of $\LogicGL$ with the axiom $\Box A \limp A$;
that is, $\LogicS$ is the smallest set of formulas that contains all theorems of $\LogicGL$ and all instances of $\Box A \limp A$, and is closed under modus ponens only (but not under necessitation).
Japaridze's modal logic $\LogicD$ is the non-normal extension of $\LogicGL$ with the axioms $\Box (\Box A \lor \Box B) \limp \Box A \lor \Box B$ and $\lnot \Box \bot$, defined analogously.
The following proposition can be shown by using the semantic tools for $\LogicS$ and $\LogicD$ developed in \cite{Vis84,Bek89}.

\begin{prop}
  $\LogicGL \subset \LogicD \subset \LogicS$.
  That is, $\LogicGL \proves A$ implies $\LogicD \proves A$, and $\LogicD \proves A$ implies $\LogicS \proves A$, but the converses do not hold.
\end{prop}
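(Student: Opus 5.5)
The inclusions are syntactic; the strictness claims I would settle by exhibiting separating formulas and verifying them with the semantics of Visser and Beklemishev.

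\emph{The inclusions.} $\LogicGL \subseteq \LogicD$ holds by definition, since $\LogicD$ is the closure of $\LogicGL$ together with extra axioms under modus ponens. For $\LogicD \subseteq \LogicS$, it suffices to show that every axiom of $\LogicD$ is a theorem of $\LogicS$, because $\LogicS$ contains $\LogicGL$ and is closed under modus ponens.
\begin{itemize}
  \item The axiom $\lnot\Box\bot$ is the instance $\Box\bot \limp \bot$ of reflection.
  \item For $\Box(\Box A \lor \Box B) \limp \Box A \lor \Box B$, take the reflection instance $\Box(\Box A\lor\Box B)\limp(\Box A\lor\Box B)$. It is already exactly the required axiom.
\end{itemize}
So $\LogicD \subseteq \LogicS$.

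\emph{Strictness of $\LogicGL \subset \LogicD$.} The formula $\lnot\Box\bot$ lies in $\LogicD$ but not in $\LogicGL$. Since $\LogicGL$ is sound for finite transitive irreflexive Kripke frames, the one-point frame with no accessible worlds refutes it: $\Box\bot$ holds at that point.

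\emph{Strictness of $\LogicD \subset \LogicS$.} This is the main obstacle, since we need a formula of $\LogicS$ that $\LogicD$ does not prove, and hence a semantics for $\LogicD$. My candidate is $\Box p \limp p$. To refute it in $\LogicD$ I would use Beklemishev's characterisation: $\LogicD \proves A$ iff $A$ holds at the root of every finite tree-like $\LogicGL$-model whose root $r$ satisfies $r \forces \Box B \leftrightarrow B$ for every $B$ in a suitable finite set of subformulas. For $\LogicS$ this reflection condition ranges over all subformulas of $A$. For $\LogicD$ it is weakened to a condition capturing only $\Box$-disjunction and consistency, roughly that the root has a reflexive-like ``tail'' of depth one.

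Using this semantics, I would build the countermodel as follows. Take a root with a single successor $w$ forced to be a dead end, with $p$ true at $w$ and false at the root. Then $\Box p$ holds at the root while $p$ fails. I would then check that the model satisfies the $\LogicD$-conditions:
\begin{itemize}
  \item $\lnot\Box\bot$ holds at the root, because the root has a successor;
  \item the disjunction property holds for boxed formulas, because there is a unique successor.
\end{itemize}
Since the axioms hold at the root, and the $\LogicGL$-theorems hold everywhere, the formulas true at the root are closed under modus ponens. So this root model validates all of $\LogicD$ while refuting $\Box p\limp p$.

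If the precise Beklemishev semantics is awkward to state here, an alternative is the arithmetical route. Take $p$ to be a false $\Pi_1$ or $\Sigma_1$-independent sentence, and use the arithmetical completeness of $\LogicD$ relative to $\TheoryPA+\mathrm{Rfn}_{\TheoryPA}(\Sigma_1)$. However, I expect the Kripke argument to be the intended one, citing \cite{Vis84,Bek89}.
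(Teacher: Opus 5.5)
\paragraph{A gap in the strictness of $\LogicD \subset \LogicS$.} Your inclusions are correct, and so is the separation of $\LogicGL$ from $\LogicD$ by $\lnot\Box\bot$. The problem is that your countermodel for $\LogicD \nvdash \Box p \limp p$ does not validate $\LogicD$ at its root. Since $w$ is a dead end, $w \forces \Box\bot$, so $r \forces \Box(\Box\bot \lor \Box\bot)$. But $r \not\forces \Box\bot$, because $w \not\forces \bot$. So the instance $A = B = \bot$ of the $\LogicD$-axiom fails at $r$. Your ``unique successor'' justification conflates $w \forces \Box A$ with $r \forces \Box A$; the latter also needs $w \forces A$.

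No finite model can repair this. The instances with $A = B$ amount to $\Box\Box A \limp \Box A$, and together with $\lnot\Box\bot$ they give $\LogicD \proves \lnot\Box^n\bot$ for every $n \geq 1$. On the other hand, the root of any finite transitive irreflexive model whose longest chain has $n$ steps forces $\Box^{n+1}\bot$. So the strategy ``the axioms hold at the root, and truth at the root is closed under modus ponens'' cannot work with a finite model. The subformula-relative criterion you mention is never stated precisely or used. Moreover, for $\LogicS$ that criterion is reflection $\Box B \limp B$ for boxed subformulas $\Box B$, not $\Box B \leftrightarrow B$.

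\paragraph{The missing idea: an infinite conversely well-founded frame.} Take the ordinal $\omega+1$ with $x R y$ iff $y < x$ and root $\omega$, and make $p$ true exactly at the finite ordinals. Then:
\begin{itemize}
  \item $\LogicGL$ holds at every point, since the frame is transitive and conversely well-founded.
  \item $\omega \forces \lnot\Box\bot$, since $\omega$ has successors.
  \item The $\LogicD$-axiom holds at $\omega$. If every $n < \omega$ forces $\Box A \lor \Box B$, then one disjunct, say $\Box A$, is forced at infinitely many $n$. Since each $n$ sees all $m < n$, $A$ holds at every finite ordinal, i.e.\ $\omega \forces \Box A$.
\end{itemize}
Hence the theory of $\omega$ contains $\LogicD$, while $\omega \forces \Box p$ and $\omega \not\forces p$.

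Your arithmetical fallback also fails as stated. For a $\Sigma_1$ sentence $\varphi$, $\TheoryPA + \mathrm{Rfn}_{\TheoryPA}(\Sigma_1)$ proves $\mathrm{Pr}_{\TheoryPA}(\varphi) \limp \varphi$ directly. For a $\Pi_1$ sentence $\varphi$ it proves the same, via provable $\Sigma_1$-completeness for $\lnot\varphi$ and $\mathrm{Con}(\TheoryPA)$. So a realization refuting $\Box p \limp p$ must be more complex than your suggestion.
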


The Gödel translation is a well-known embedding of intuitionistic propositional logic into modal logic.
However, for our purpose, we need a different translation, introduced by Visser \cite{Vis81}, which we call Visser's Gödel translation.

\begin{defn} \label{defn:visser}
  Visser's Gödel translation $(\cdot)^\Vis : \FmlPL \to \FmlML$ is defined as follows.
  \begin{itemize}
    \item $p^\Vis = p \land \Box p$
    \item $\bot^\Vis = \bot$
    \item $(A \land B)^\Vis = A^\Vis \land B^\Vis$
    \item $(A \lor B)^\Vis = A^\Vis \lor B^\Vis$
    \item $(A \limp B)^\Vis = \Box(A^\Vis \limp B^\Vis)$
  \end{itemize}
\end{defn}

For a set $\Gamma$ of propositional formulas, we write $\Gamma^\Vis$ for the set $\{A^\Vis : A \in \Gamma\}$.

The difference between the original translation and Visser's lies only in the translation of propositional variables:
the original Gödel translation maps $p$ to $\Box p$, while Visser's maps $p$ to $p \land \Box p$.

\begin{prop}[\cite{Vis81}]
  Let $A \in \FmlPL$.
  \begin{itemize}
    \item $\LogicBPL \proves A$ iff $\LogicKFour \proves A^\Vis$.
    \item $\LogicFPL \proves A$ iff $\LogicGL \proves A^\Vis$.
  \end{itemize}
\end{prop}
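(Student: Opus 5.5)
We prove both items by the same two-directional scheme, treating $\LogicBPL$ and $\LogicFPL$ through Hilbert-style (or natural-deduction) presentations in Visser's sense. The easier direction is soundness of the translation: $\LogicBPL \proves A$ implies $\LogicKFour \proves A^\Vis$, and similarly for $\LogicFPL$ and $\LogicGL$.

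\emph{Soundness.} We first establish a persistence lemma: for every $A \in \FmlPL$,
\[
\LogicKFour \proves A^\Vis \limp \Box A^\Vis .
\]
The proof is by induction on $A$.
\begin{itemize}
  \item For variables this is exactly why $p^\Vis = p \land \Box p$ was chosen, together with the $\LogicKFour$ axiom $\Box p \limp \Box\Box p$.
  \item For $\bot$ it is trivial.
  \item For $\land$ and $\lor$ it follows from normality.
  \item For implications it is the $\LogicKFour$ axiom itself.
\end{itemize}
With this lemma we check that every axiom of $\LogicBPL$ translates to a theorem of $\LogicKFour$ of the form $\Box(\cdots)$, and that every rule preserves this. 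Formally, we show that if $A \limp B$ is derivable then $\LogicKFour \proves \Box(A^\Vis \limp B^\Vis)$, using necessitation and distribution. The characteristic weak modus ponens of $\LogicBPL$, i.e.\ from $A$ and $A \limp B$ infer $B$ only at the level of derived implications, is handled by transitivity of boxed implications, which is the translation of the rule $(A\limp B)\land(B\limp C) \limp (A\limp C)$. For $\LogicFPL$ we additionally translate its extra Löb rule, from $(B\limp A)\limp B$ infer $B$ (or its axiom form $((A\limp B)\limp A)\limp A$ restricted as in Visser). This translation reduces to an instance of the Löb axiom, which $\LogicGL$ proves. We use $A^\Vis \limp \Box A^\Vis$ to commute boxes past translated formulas.

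\emph{Faithfulness.} We go semantically. Both $\LogicBPL$ and $\LogicFPL$ are complete for transitive Kripke frames (respectively finite irreflexive transitive frames) with \emph{persistent} valuations, where implication is evaluated at strict successors: $w \Vdash A\limp B$ iff every $v$ with $wRv$ and $v\Vdash A$ satisfies $v \Vdash B$. We assume this completeness from Visser \cite{Vis81}, citing it rather than reproving it. Suppose $\LogicBPL \nvdash A$, and take a countermodel $(W,R,V)$ with persistent $V$. Viewed as a $\LogicKFour$-model with the same valuation, we show by induction that $w \Vdash A$ in the propositional sense iff $w \Vdash A^\Vis$ in the modal sense. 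For variables this uses persistence, since $p$ true at $w$ implies $p$ true at all successors, so $p\land\Box p$ holds. The implication clause matches $\Box(A^\Vis\limp B^\Vis)$ exactly. Hence $A^\Vis$ fails, and $\LogicKFour \nvdash A^\Vis$. For $\LogicFPL$ the frames are finite, transitive and conversely well-founded, hence $\LogicGL$-frames, and the same argument applies.

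The main obstacle is the faithfulness direction. It depends on having the Kripke completeness of $\LogicBPL$ and $\LogicFPL$ with respect to exactly the frame classes that validate $\LogicKFour$ and $\LogicGL$, especially the finite-model completeness of $\LogicFPL$. If that completeness cannot be cited, we would instead follow Yamasaki and Sano and argue syntactically. That route uses a cut-free sequent calculus for $\LogicKFour$ (resp.\ $\LogicGL$) and proves, by induction on cut-free derivations of $\Gamma^\Vis \Rightarrow \Delta^\Vis$, that the corresponding propositional sequent is derivable. The delicate point there is the modal rule, where the boxed context must be matched with translated implications and variables.
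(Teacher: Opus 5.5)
Your overall architecture works, but the one step specific to $\LogicFPL$ fails as written. The rule you translate, from $(B \limp A) \limp B$ infer $B$, is not admissible in $\LogicFPL$. Take $B := \top \limp \bot$ and $A := \bot$. In a finite irreflexive transitive model, a point sees no endpoint iff it is itself an endpoint. So $(\top \limp \bot) \limp \bot$ and $\top \limp \bot$ hold at exactly the same points. Hence $((\top \limp \bot) \limp \bot) \limp (\top \limp \bot)$ is valid and, by the completeness you cite, an $\LogicFPL$-theorem. But $\top \limp \bot$ fails at every point with a successor.

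On the modal side (using that $\top^\Vis$ is $\LogicGL$-equivalent to $\top$), the premise translates to $\Box(\Box(\Box\bot \limp \bot) \limp \Box\bot)$, which is indeed a necessitated L\"ob instance. The conclusion, however, translates to $\Box\bot$, which $\LogicGL$ does not prove. So the inference itself is not sound for $\LogicGL$. Your alternative, Peirce's law $((A \limp B) \limp A) \limp A$, is not $\LogicFPL$-valid either: with $A := p$ it fails at any point seeing an endpoint that refutes $p$.

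What you need is the actual L\"ob principle of $\LogicFPL$. One option is the axiom $((\top \limp A) \limp A) \limp (\top \limp A)$, whose translation is $\LogicGL$-equivalent to $\Box(\Box(\Box A^\Vis \limp A^\Vis) \limp \Box A^\Vis)$. Another is the rule ``from $(A \limp B) \land A \proves B$ infer $\proves A \limp B$'' (the case $k = 0$, $\Sigma = \emptyset$ of $\RImpR{1}{1}$), whose translation follows from necessitation and L\"ob's axiom. With this repair the soundness direction goes through; your persistence lemma is the level-$1$ case of Lemma~\ref{lem:embedding_SPL_to_S_aux1}. Your semantic faithfulness argument is correct as it stands.

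The paper itself gives no proof of this proposition; it cites Visser. Your semantic argument is the standard way to derive the result from his Kripke completeness theorems. The route the paper does indicate is proof-theoretic: the remark on $\LogicFPL$ and $\LogicGL$ in Section~\ref{sect:Embedding_GentzenSPL_GentzenS}, and Yamasaki--Sano for $\LogicBPL$ and $\LogicKFour$.
\begin{itemize}
\item The forward direction goes by induction on derivations in the sequent calculus of Ishii et al. The multi-premise rule $\RImpR{1}{1}$ is handled by the persistence lemma together with the admissible rule from $\Sigma^\Vis, \Gamma, \Box A \seq{1} A$ to $\Sigma^\Vis, \Box\Gamma \seq{1} \Box A$.
\item The converse goes by induction on cut-free $\LogicGL$-derivations of $\Gamma^\Vis, \Phi_1, \Box\Phi_2 \seq{1} \Psi, \Delta^\Vis$, with height-preserving inversion.
\end{itemize}
Your route is shorter, but it places all the weight on Visser's completeness theorems (for $\LogicFPL$, completeness for finite irreflexive transitive models). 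It also cannot be transported to $\LogicS$ and $\LogicD$, because no independent semantics for $\LogicSPL$ or $\LogicDPL$ is known. The paper lists this as open, and it is exactly why the paper argues syntactically. The syntactic route needs no completeness theorem on the propositional side and yields cut-admissibility for the propositional calculus as a by-product. Its cost is that it relies on cut-admissibility of the modal calculus.
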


Finally, we fix the proof-theoretic terminology used throughout this paper.
For the sequent calculi introduced in the following sections,
the notions of a proof-tree and its height, and of (height-preserving) admissible rules,
are defined as usual (see e.g.\ \cite{TS00, NvP11}).
We write $\mathsf{G} \proves S$ if a sequent $S$ is derivable in a sequent calculus $\mathsf{G}$,
and $\mathsf{G} \proves^h S$ if $S$ is derivable in $\mathsf{G}$ by a proof-tree of height at most $h$.
For a formula $A$ and sets $\Gamma, \Delta$ of formulas, in the context of sequents,
we write $A, \Gamma$ for $\{A\} \cup \Gamma$ and $\Gamma, \Delta$ for $\Gamma \cup \Delta$.
Note that $A, \Gamma = \Gamma$ when $A \in \Gamma$, so that contraction is built into this notation.

\section{Sequent calculus $\GentzenS$ for modal logic $\LogicS$} \label{sect:GentzenS}

Although Kushida \cite{Kus20} first introduced a sequent calculus for $\LogicS$, his calculus is not suitable for our purpose.
Here we introduce a sequent calculus $\GentzenS$ obtained by slightly modifying the sequent calculus for $\LogicS$ due to Kashima and Kato \cite{KK23}.
We present below the definition of $\GentzenS$ together with its basic proof-theoretic properties.

\begin{defn}
  Let $\Gamma, \Delta$ be sets of formulas and let $\lev = 1, 2$.
  An expression of the form $\Gamma \seq{\lev} \Delta$ is called a \emph{sequent}.
  The sequent calculus $\GentzenS$ for $\LogicS$, in which sequents of level $\lev = 1, 2$ occur, is defined by the following rules.

  \begin{center}
    \begin{minipage}{0.46\textwidth}\centering
      \begin{prooftree}
        \AxiomC{}
        \RightLabel{$\RAx{\lev}$}
        \UnaryInfC{$p \seq{\lev} p$}
      \end{prooftree}
      where $p \in \PropVar$.
    \end{minipage}\hfill
    \begin{minipage}{0.46\textwidth}\centering
      \begin{prooftree}
        \AxiomC{}
        \RightLabel{$\RBotL{\lev}$}
        \UnaryInfC{$\bot \seq{\lev} {}$}
      \end{prooftree}
    \end{minipage}

    \medskip

    \begin{minipage}{0.46\textwidth}\centering
      \begin{prooftree}
        \AxiomC{$\Gamma \seq{\lev} \Delta$}
        \RightLabel{$\RWL{\lev}{\lev}$}
        \UnaryInfC{$A, \Gamma \seq{\lev} \Delta$}
      \end{prooftree}
    \end{minipage}\hfill
    \begin{minipage}{0.46\textwidth}\centering
      \begin{prooftree}
        \AxiomC{$\Gamma \seq{\lev} \Delta$}
        \RightLabel{$\RWR{\lev}{\lev}$}
        \UnaryInfC{$\Gamma \seq{\lev} \Delta, A$}
      \end{prooftree}
    \end{minipage}

    \medskip

    \begin{prooftree}
      \AxiomC{$\Gamma \seq{1} \Delta$}
      \RightLabel{$\RLift{1}{2}$}
      \UnaryInfC{$\Gamma \seq{2} \Delta$}
    \end{prooftree}

    \medskip

    \begin{minipage}{0.46\textwidth}\centering
      \begin{prooftree}
        \AxiomC{$A, B, \Gamma \seq{\lev} \Delta$}
        \RightLabel{$\RAndL{\lev}{\lev}$}
        \UnaryInfC{$A \land B, \Gamma \seq{\lev} \Delta$}
      \end{prooftree}
    \end{minipage}\hfill
    \begin{minipage}{0.46\textwidth}\centering
      \begin{prooftree}
        \AxiomC{$\Gamma \seq{\lev} \Delta, A$}
        \AxiomC{$\Gamma \seq{\lev} \Delta, B$}
        \RightLabel{$\RAndR{\lev}{\lev}$}
        \BinaryInfC{$\Gamma \seq{\lev} \Delta, A \land B$}
      \end{prooftree}
    \end{minipage}

    \medskip

    \begin{minipage}{0.46\textwidth}\centering
      \begin{prooftree}
        \AxiomC{$A, \Gamma \seq{\lev} \Delta$}
        \AxiomC{$B, \Gamma \seq{\lev} \Delta$}
        \RightLabel{$\ROrL{\lev}{\lev}$}
        \BinaryInfC{$A \lor B, \Gamma \seq{\lev} \Delta$}
      \end{prooftree}
    \end{minipage}\hfill
    \begin{minipage}{0.46\textwidth}\centering
      \begin{prooftree}
        \AxiomC{$\Gamma \seq{\lev} \Delta, A, B$}
        \RightLabel{$\ROrR{\lev}{\lev}$}
        \UnaryInfC{$\Gamma \seq{\lev} \Delta, A \lor B$}
      \end{prooftree}
    \end{minipage}

    \medskip

    \begin{minipage}{0.46\textwidth}\centering
      \begin{prooftree}
        \AxiomC{$\Gamma \seq{\lev} \Delta, A$}
        \AxiomC{$B, \Gamma \seq{\lev} \Delta$}
        \RightLabel{$\RImpL{\lev}{\lev}$}
        \BinaryInfC{$A \limp B, \Gamma \seq{\lev} \Delta$}
      \end{prooftree}
    \end{minipage}\hfill
    \begin{minipage}{0.46\textwidth}\centering
      \begin{prooftree}
        \AxiomC{$A, \Gamma \seq{\lev} \Delta, B$}
        \RightLabel{$\RImpR{\lev}{\lev}$}
        \UnaryInfC{$\Gamma \seq{\lev} \Delta, A \limp B$}
      \end{prooftree}
    \end{minipage}

    \medskip

    \begin{minipage}{0.46\textwidth}\centering
      \begin{prooftree}
        \AxiomC{$\Box\Gamma, \Gamma, \Box A \seq{1} A$}
        \RightLabel{$\RBoxGL{1}{1}$}
        \UnaryInfC{$\Box\Gamma \seq{1} \Box A$}
      \end{prooftree}
    \end{minipage}\hfill
    \begin{minipage}{0.46\textwidth}\centering
      \begin{prooftree}
        \AxiomC{$A, \Gamma \seq{2} \Delta$}
        \RightLabel{$\RBoxL{2}{2}$}
        \UnaryInfC{$\Box A, \Gamma \seq{2} \Delta$}
      \end{prooftree}
    \end{minipage}
  \end{center}
\end{defn}

The provability of $\seq{1}$ is the same as that of the sequent calculus for $\LogicGL$ (cf.\ \cite{SV82}).

\begin{rem} \label{rem:GentzenS_differences}
  Our calculus differs from the system of \cite{KK23} in two respects:
  the axiom $\RAx{\lev}$ is restricted to propositional variables, and the rules for $\land$ and $\lor$ are adopted as primitive rules.
  Nevertheless, by suitably adapting the semantic arguments of \cite{KK23}, one can verify that the facts stated below,
  such as cut-admissibility and the coincidence of provability with that of $\LogicS$, remain valid for our calculus.
\end{rem}

In particular, the identity sequents for arbitrary formulas, which are the axioms of the system of \cite{KK23}, are derivable.

\begin{lem} \label{lem:GentzenS_original_axioms}
  For every $A \in \FmlML$, $\GentzenS \proves A \seq{\lev} A$.
\end{lem}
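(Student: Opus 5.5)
We prove the statement by induction on the construction of $A$, simultaneously for $\lev = 1, 2$.

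\emph{Base cases.} If $A = p$, the sequent $p \seq{\lev} p$ is an instance of $\RAx{\lev}$. If $A = \bot$, we apply $\RBotL{\lev}$ to get $\bot \seq{\lev}$ and then $\RWR{\lev}{\lev}$ to get $\bot \seq{\lev} \bot$.

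\emph{Propositional connectives.} Suppose $A \seq{\lev} A$ and $B \seq{\lev} B$ are derivable.
\begin{itemize}
  \item For $A \land B$: weaken to $A, B \seq{\lev} A$ and $A, B \seq{\lev} B$, apply $\RAndR{\lev}{\lev}$, then apply $\RAndL{\lev}{\lev}$.
  \item For $A \lor B$: weaken to $A \seq{\lev} A, B$ and $B \seq{\lev} A, B$, apply $\ROrL{\lev}{\lev}$, then apply $\ROrR{\lev}{\lev}$.
  \item For $A \limp B$: weaken to $A \seq{\lev} A, B$ and $B, A \seq{\lev} B$. Applying $\RImpL{\lev}{\lev}$ (with $\Gamma = \{A\}$, $\Delta = \{B\}$) gives $A \limp B, A \seq{\lev} B$, and $\RImpR{\lev}{\lev}$ then gives $A \limp B \seq{\lev} A \limp B$.
\end{itemize}
Since $\Gamma, \Delta$ are sets, the side formulas coincide with the principal contexts as required, so no contraction issue arises.

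\emph{The modal case $\Box A$.} This is the only case that needs care, because the rules for $\Box$ differ between levels.

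At level $1$, weakening the hypothesis $A \seq{1} A$ by $\RWL{1}{1}$ twice gives $\Box A, A, \Box A \seq{1} A$, that is, $\Box A, A \seq{1} A$. This is exactly the premise of $\RBoxGL{1}{1}$ with $\Gamma = \{A\}$ and principal formula $A$, whose conclusion is $\Box A \seq{1} \Box A$.

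At level $2$, we do not use $\RBoxL{2}{2}$ (which would need $A \seq{2} \Box A$). Instead, we take the level-$1$ derivation of $\Box A \seq{1} \Box A$ and apply $\RLift{1}{2}$.

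For this reason the induction must establish level $1$ for all formulas. Level $2$ follows at each stage: from the level-$2$ induction hypotheses in the propositional cases, or, more simply, uniformly from level $1$ by a single application of $\RLift{1}{2}$. The main obstacle is only noticing this level interaction in the $\Box$ case; everything else is routine.
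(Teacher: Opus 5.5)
Your proof is correct and takes the same route as the paper: induction on $A$, the standard derivations for $\bot$, $\land$, $\lor$, $\limp$, and in the $\Box$ case an application of $\RBoxGL{1}{1}$ (with context $\{A\}$) to the weakened level-$1$ hypothesis, followed by $\RLift{1}{2}$ at level $2$. Your remark that level $2$ could be obtained for every formula by a single final lift is only a harmless reorganization of that argument.
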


\begin{proof}
  We show by induction on $A$, uniformly in $\lev = 1, 2$.
  If $A \equiv p$, the claim follows from $\RAx{\lev}$.
  If $A \equiv \bot$, it follows from $\RBotL{\lev}$ and $\RWR{\lev}{\lev}$.
  The cases $A \equiv B \land C$ and $A \equiv B \lor C$ follow from the induction hypothesis by the rules for $\land$ and $\lor$ together with weakening, as usual.
  The case $A \equiv B \limp C$ is proved by the following proof-tree:
  \begin{prooftree}
    \AxiomC{\scriptsize(I.H.)}
    \noLine
    \UnaryInfC{$B \seq{\lev} B$}
    \RightLabel{$\RWR{\lev}{\lev}$}
    \UnaryInfC{$B \seq{\lev} C, B$}
    \AxiomC{\scriptsize(I.H.)}
    \noLine
    \UnaryInfC{$C \seq{\lev} C$}
    \RightLabel{$\RWL{\lev}{\lev}$}
    \UnaryInfC{$C, B \seq{\lev} C$}
    \RightLabel{$\RImpL{\lev}{\lev}$}
    \BinaryInfC{$B \limp C, B \seq{\lev} C$}
    \RightLabel{$\RImpR{\lev}{\lev}$}
    \UnaryInfC{$B \limp C \seq{\lev} B \limp C$}
  \end{prooftree}
  The case $A \equiv \Box B$ is proved by the following proof-tree:
  \begin{prooftree}
    \AxiomC{\scriptsize(I.H.)}
    \noLine
    \UnaryInfC{$B \seq{1} B$}
    \RightLabel{$\RWL{1}{1}$}
    \UnaryInfC{$B, \Box B \seq{1} B$}
    \RightLabel{$\RBoxGL{1}{1}$}
    \UnaryInfC{$\Box B \seq{1} \Box B$}
  \end{prooftree}
  When $\lev = 2$, we apply $\RLift{1}{2}$ to obtain $\Box B \seq{2} \Box B$.
\end{proof}

\begin{prop}[Provability of $\GentzenS$ \cite{KK23}] \label{prop:GentzenS_provability}
  The following equivalences hold.
  \begin{enumerate}
    \item $\GentzenS \proves \Gamma \seq{1} \Delta$ iff $\LogicGL \proves \lconj \Gamma \limp \ldisj \Delta$.
    \item $\GentzenS \proves \Gamma \seq{2} \Delta$ iff $\LogicS \proves \lconj \Gamma \limp \ldisj \Delta$.
  \end{enumerate}
\end{prop}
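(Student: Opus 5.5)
We prove both equivalences by splitting each into soundness (left to right) and completeness (right to left). Soundness goes by induction on the height of a $\GentzenS$-proof. Completeness uses cut-admissibility together with the Hilbert-style axiomatizations of $\LogicGL$ and $\LogicS$ from Section~\ref{sect:preliminaries}. As Remark~\ref{rem:GentzenS_differences} says, the closest model is \cite{KK23}, and we adapt their argument to our primitive $\land,\lor$ rules and atomic axioms.

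For soundness, we show simultaneously that every derivable $\Gamma \seq{1} \Delta$ satisfies $\LogicGL \proves \lconj\Gamma \limp \ldisj\Delta$, and every derivable $\Gamma \seq{2}\Delta$ satisfies $\LogicS \proves \lconj\Gamma\limp\ldisj\Delta$. The propositional rules and weakenings are classical tautology manipulations, and both logics are closed under modus ponens, so these cases are routine. $\RLift{1}{2}$ is sound because $\LogicGL \subseteq \LogicS$. For $\RBoxL{2}{2}$ we use the axiom $\Box A \limp A$ of $\LogicS$ together with modus ponens; this is precisely why the rule is confined to level~2. The main case is $\RBoxGL{1}{1}$. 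From $\LogicGL \proves \lconj\Box\Gamma \land \lconj\Gamma \land \Box A \limp A$ we apply necessitation and distribution to obtain $\Box\lconj\Box\Gamma \land \Box\lconj\Gamma \limp \Box(\Box A\limp A)$. Using $\Box B \limp \Box\Box B$ (derivable in $\LogicGL$) and Löb's axiom, this yields $\lconj\Box\Gamma \limp \Box A$. Necessitation is available here because level~1 is tied to the normal logic $\LogicGL$.

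For completeness, we first establish admissibility of cut at each level:
\[
\Gamma \seq{\lev}\Delta, A \quad\text{and}\quad A,\Gamma\seq{\lev}\Delta \quad\Longrightarrow\quad \Gamma\seq{\lev}\Delta.
\]
With cut available, we show that every theorem $A$ of $\LogicGL$ gives $\seq{1} A$, and every theorem of $\LogicS$ gives $\seq{2} A$, by induction on Hilbert derivations. Propositional axioms are handled via Lemma~\ref{lem:GentzenS_original_axioms}. The K axiom and Löb's axiom at level~1 are derived using $\RBoxGL{1}{1}$. Necessitation at level~1 follows from $\RBoxGL{1}{1}$ with empty $\Gamma$ after weakening in $\Box A$. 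Modus ponens is handled by cut. At level~2, theorems of $\LogicGL$ are imported via $\RLift{1}{2}$, the axiom $\Box A\limp A$ comes from $\RBoxL{2}{2}$ applied to $A \seq{2} A$, and modus ponens is again handled by cut. Finally, from $\seq{\lev} \lconj\Gamma\limp\ldisj\Delta$ we recover $\Gamma\seq{\lev}\Delta$ by inverting $\limp$R, $\land$L and $\lor$R, either by cutting against identity sequents or by proving height-preserving invertibility.

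The main obstacle is cut admissibility, especially at level~2 and across $\RLift{1}{2}$. The troublesome case is a level-2 cut formula $\Box A$ introduced on the right by $\RBoxGL{1}{1}$ followed by a lift, and on the left by $\RBoxL{2}{2}$; the level-1 premise does not directly match a level-2 context. I would first try a syntactic route: cut the level-1 premise $\Box\Gamma,\Gamma,\Box A\seq{1}A$ (lifted to level~2) against $A,\ldots\seq{2}\Delta$. The residual $\Box A$ in the antecedent would then be removed by a cut of lower rank, using $\Box\Gamma\seq{1}\Box A$ lifted, and the $\Gamma$ would be recovered from $\Box\Gamma$ by $\RBoxL{2}{2}$. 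If the induction measure fails to decrease, I would fall back on the semantic argument of \cite{KK23}. That argument proves completeness of the cut-free calculus directly: from a failed proof search one builds a finite transitive irreflexive countermodel for level~1, and for level~2 a Visser-style $\LogicS$-model, namely a tail of reflexive-looking root copies satisfying $\Box B\limp B$ for the relevant subformulas. This makes cut admissibility a corollary and avoids the delicate reduction case.
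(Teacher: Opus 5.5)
Your proposal is correct and ends up resting on the same thing the paper does. The paper gives no independent proof: it cites \cite{KK23} and asserts in Remark~\ref{rem:GentzenS_differences} that the semantic completeness argument adapts to atomic axioms and primitive $\land,\lor$ rules. Your soundness induction and Hilbert simulation are correct routine additions, and the Hilbert detour becomes redundant once the countermodel construction gives completeness of the cut-free calculus directly.

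One caution about your tentative syntactic reduction: it would not work as written. The residual cut on $\Box A$ has the same degree as the original cut, and its right premise has uncontrolled height; this is the familiar obstacle from $\LogicGL$. It goes through only if you first remove $\Box A$ by a level-1 cut of $\Box\Gamma \seq{1} \Box A$ against $\Box\Gamma,\Gamma,\Box A \seq{1} A$, which uses cut-admissibility for the $\LogicGL$ fragment. You then lift, turn $\Gamma$ into $\Box\Gamma$ with $\RBoxL{2}{2}$, and only then perform the lower-degree cut on $A$.
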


\begin{prop}[Cut-admissibility of $\GentzenS$ {\cite[Theorem 3.1]{KK23}}] \label{prop:GentzenS_cut_elimination}
  The cut rule is admissible in $\GentzenS$.
  That is, when the cut rule $\RCut{\lev}{\lev}$ is defined as below, if $\GentzenS + \RCut{\lev}{\lev} \proves \Gamma \seq{\lev} \Delta$ then $\GentzenS \proves \Gamma \seq{\lev} \Delta$.
  \begin{prooftree}
    \AxiomC{$\Gamma_1 \seq{\lev} \Delta_1, A$}
    \AxiomC{$A, \Gamma_2 \seq{\lev} \Delta_2$}
    \RightLabel{$\RCut{\lev}{\lev}$}
    \BinaryInfC{$\Gamma_1, \Gamma_2 \seq{\lev} \Delta_1, \Delta_2$}
  \end{prooftree}
\end{prop}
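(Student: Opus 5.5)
The plan is a syntactic cut-elimination argument in the style of Gentzen. We show that a single topmost application of $\RCut{\lev}{\lev}$, whose premises are cut-free, can be eliminated; the general statement then follows by induction on the number of cuts. For the single cut we use a double induction: the outer induction is on the complexity of the cut formula $A$, and the inner induction is on the sum of the heights of the two premise derivations. Because contraction is absorbed into the set notation, we use a multicut-like formulation where needed, deleting every occurrence of $A$ at once. This way we never have to permute cuts over implicit contractions. We first record the routine facts that weakening is height-preserving admissible at each level and that $\RLift{1}{2}$ commutes with everything. We then reduce the level-2 cut as follows: if both premises end in $\RLift{1}{2}$, we cut at level 1 and lift afterwards.

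The case analysis proceeds as usual. If either premise is an axiom $\RAx{\lev}$ or $\RBotL{\lev}$, or if the cut formula was introduced by weakening, the conclusion follows by weakening. If $A$ is not principal in one premise, we permute the cut upward into the premises of that rule, using the inner induction hypothesis. This works for all propositional rules and for $\RBoxL{2}{2}$. It also works for $\RLift{1}{2}$ on the left premise when the right premise is at level 2 and has $A$ non-principal. If $A$ is principal on both sides and is built by $\land,\lor,\limp$, we replace the cut by cuts on the immediate subformulas, using the outer induction hypothesis.

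The modal cases are the main obstacle. At level 1, suppose the left premise ends in $\RBoxGL{1}{1}$ with conclusion $\Box\Gamma_1 \seq{1} \Box B$, and the right premise ends in $\RBoxGL{1}{1}$ with $\Box B, \Box\Gamma_2 \seq{1} \Box C$. The standard $\LogicGL$ trick applies here. We first cut $\Box\Gamma_1 \seq{1} \Box B$ against the premise $\Box B, B, \Box\Gamma_2, \Gamma_2, \Box C \seq{1} C$, which has smaller height, to remove $\Box B$ (inner induction). We then cut with $\Box\Gamma_1, \Gamma_1, \Box B \seq{1} B$ on $B$ (outer induction). This leaves a residual $\Box B$ on the left, which we remove once more by the inner cut against $\Box\Gamma_1\seq{1}\Box B$. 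Finally we reapply $\RBoxGL{1}{1}$. The delicate point is that the last of these cuts is on $\Box B$ with no decrease in formula complexity. It is justified only if the inner measure decreases, so we measure by the height of the right derivation, or we fall back on the Valentini-style third parameter counting boxed formulas. I would follow Valentini's width argument and expect this to be the hardest step.

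At level 2, the principal case pairs $\RLift{1}{2}$ applied to $\Box\Gamma_1\seq{1}\Box B$ (via $\RBoxGL{1}{1}$) with $\RBoxL{2}{2}$ on the right. We need $\Box\Gamma_1 \seq{2} B$, which we obtain from the premise $\Box\Gamma_1,\Gamma_1,\Box B\seq{1}B$. We cut $\Box B$ away at level 1 using $\Box\Gamma_1\seq{1}\Box B$, lift the result to level 2, and then eliminate each $\Gamma_1$-formula by cutting against $D\seq{2}D$ after applying $\RBoxL{2}{2}$. In practice it is easier to apply $\RBoxL{2}{2}$ to each $D\in\Gamma_1$, because $\Box D$ is already present and contraction is built in. A final cut on $B$, which has lower complexity, with the right premise $B,\Gamma_2\seq{2}\Delta_2$ completes the case.
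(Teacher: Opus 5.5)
The paper gives no syntactic argument for this statement. It imports cut-admissibility from \cite{KK23}, where it is obtained semantically (and adapted to atomic axioms and primitive $\land,\lor$), and it notes that this is non-constructive. A Gentzen-style proof would therefore be a genuinely different route. However, yours has a gap exactly where all the difficulty sits: the $\RBoxGL{1}{1}$/$\RBoxGL{1}{1}$ principal case, i.e.\ cut-elimination for $\LogicGL$ itself.

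Your first two cuts there are the standard ones and are fine. The residual cut of $\Box\Gamma_1 \seq{1} \Box B$ against $\Box\Gamma_1,\Gamma_1,\Box B,\Box\Gamma_2,\Gamma_2,\Box C \seq{1} C$ is the problem. It is again on $\Box B$, and its right premise is the \emph{output} of the induction hypothesis applied to the cut on $B$, so nothing bounds its height. Your first justification (height of the right derivation, or sum of heights) therefore fails outright. Your second is only named, not supplied. Valentini's width is a specific and delicate measure, and whether his reductions really make it decrease was itself disputed and only later settled \cite{Val83,Moe01,GR12}. A parameter literally ``counting boxed formulas'' (say, the number of boxed subformulas not yet in the antecedent of the right premise) does drop in your residual cut, since $\Box C$ has entered the antecedent. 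But it can increase when the cut is permuted above an $\RWL{1}{1}$ introducing a boxed formula, so it cannot simply be appended to your lexicographic induction without reorganising the calculus or the case analysis. Until such a parameter is fixed and checked against every case, level~1 is not proved. Your alternatives are to import a complete syntactic $\LogicGL$ cut-elimination as a black box, after checking that it applies to this set-based calculus with explicit weakening and the context-free rule $\RBoxGL{1}{1}$, or to argue semantically as the paper does.

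Your induction is also mis-organised, though this is easy to repair. In the level-2 principal case ($\RLift{1}{2}$ over $\RBoxGL{1}{1}$ against $\RBoxL{2}{2}$) you cut $\Box\Gamma_1 \seq{1} \Box B$ against $\Box\Gamma_1,\Gamma_1,\Box B \seq{1} B$. The cut formula is still $\Box B$, and the premises have heights $h_L-1$ and $h_L-2$, where $h_L$ is the height of the original left premise; their sum need not be below $h_L+h_R$. So a single induction on (complexity, sum of heights) across both levels does not license this step. Instead, prove cut-admissibility at level~1 outright first and use it as a lemma at level~2. This is legitimate because a level-1 derivation never contains level-2 sequents, and with that change the rest of your level-2 analysis goes through.

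Finally, two of your ``routine facts'' are false for $\GentzenS$ and must not be used. Weakening is not height-preservingly admissible, since $\RAx{\lev}$, $\RBotL{\lev}$ and $\RBoxGL{1}{1}$ allow no arbitrary context in their conclusions; the paper makes exactly this point when discussing Petrukhin's proof. And $\RLift{1}{2}$ cannot be permuted above $\RBoxGL{1}{1}$, which is precisely why the level-2 principal case needs a level-1 cut on $\Box B$ at all.
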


\begin{lem}[Inversion rules $\Inv{\RImpL{\lev}{\lev}}$] \label{lem:GentzenS_inversion_ImpL}
  In $\GentzenS$ the inversion rules $\Inv{\RImpL{\lev}{\lev}}$ are height-preservingly admissible.
  \begin{center}
    \begin{minipage}{0.46\textwidth}\centering
      \begin{prooftree}
        \AxiomC{$\GentzenS \proves^h A \limp B, \Gamma \seq{\lev} \Delta$}
        \RightLabel{$\Inv{\RImpL{\lev}{\lev}}$}
        \UnaryInfC{$\GentzenS \proves^h \Gamma \seq{\lev} \Delta, A$}
      \end{prooftree}
    \end{minipage}\hfill
    \begin{minipage}{0.46\textwidth}\centering
      \begin{prooftree}
        \AxiomC{$\GentzenS \proves^h A \limp B, \Gamma \seq{\lev} \Delta$}
        \RightLabel{$\Inv{\RImpL{\lev}{\lev}}$}
        \UnaryInfC{$\GentzenS \proves^h B, \Gamma \seq{\lev} \Delta$}
      \end{prooftree}
    \end{minipage}
  \end{center}
\end{lem}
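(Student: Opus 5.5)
We prove both inversion rules simultaneously by induction on the height $h$ of a proof-tree of $A \limp B, \Gamma \seq{\lev} \Delta$, uniformly in $\lev = 1, 2$. Since sequents are built from sets, we must keep in mind that $A \limp B$ may or may not belong to $\Gamma$; we may assume without loss of generality that $A \limp B \notin \Gamma$. Otherwise we replace $\Gamma$ by $\Gamma \setminus \{A \limp B\}$, and recover the required conclusion at the end by a height-preserving weakening. For that we need an auxiliary lemma: $\RWL{\lev}{\lev}$ and $\RWR{\lev}{\lev}$ are height-preservingly admissible. This holds by a routine induction on height that pushes the weakening formula upward. In the case of $\RBoxGL{1}{1}$, where the context must consist of boxed formulas only, the weakening formula is instead introduced by a single application of $\RWL{1}{1}$ or $\RWR{1}{1}$ directly below; this is possible without increasing height because the weakening rules are primitive and the last rule can simply be replaced. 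I would state and prove this lemma first.

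The induction splits on the last rule $R$ of the given proof-tree.
\begin{itemize}
  \item If the sequent is an axiom $\RAx{\lev}$ or $\RBotL{\lev}$, then $A \limp B$ cannot occur on the left, so this case is vacuous.
  \item If $R$ is $\RImpL{\lev}{\lev}$ with $A \limp B$ principal, the two premises are exactly $\Gamma \seq{\lev} \Delta, A$ and $B, \Gamma \seq{\lev} \Delta$, of height $< h$. We are done.
  \item If $R$ is $\RWL{\lev}{\lev}$ introducing $A \limp B$, the premise is $\Gamma \seq{\lev} \Delta$ of height $h-1$. We obtain the targets by one application of $\RWR{\lev}{\lev}$ (adding $A$) or $\RWL{\lev}{\lev}$ (adding $B$), giving height $\le h$.
  \item If $R$ is any other rule in which $A \limp B$ is a side formula, including $\RLift{1}{2}$, all the propositional rules, $\RBoxL{2}{2}$, and weakenings of other formulas, then $A \limp B$ appears in the antecedent of every premise. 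We apply the induction hypothesis to each premise and then reapply $R$; the height stays $\le h$.
\end{itemize}

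The one delicate case is $\RBoxGL{1}{1}$. Its conclusion has the form $\Box\Gamma' \seq{1} \Box C$, so every formula on the left is boxed. An implication $A \limp B$ therefore cannot occur in the antecedent, and this case is also vacuous. The same observation settles the auxiliary weakening lemma. I expect the main care to be needed here, and in bookkeeping with set notation. When $A \limp B$ is principal but also occurs in the premise context, the premises are $\Gamma \seq{\lev} \Delta, A$ together with $A \limp B$ still present. In that situation we first apply the induction hypothesis to the premises to remove the residual copy, and then we are done. This is exactly why the induction hypothesis must be applied even in the principal case.
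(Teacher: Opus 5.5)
Your skeleton is the paper's: simultaneous induction on height, with $\RWL{\lev}{\lev}$ and $\RImpL{\lev}{\lev}$ the only rules that can have $A \limp B$ principal, and $\RBoxGL{1}{1}$ vacuous. Your handling of a residual copy of $A \limp B$ in the premises of a principal $\RImpL{\lev}{\lev}$ is correct, and it is a detail the paper leaves implicit.

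The gap is your auxiliary lemma: $\RWL{\lev}{\lev}$ and $\RWR{\lev}{\lev}$ are \emph{not} height-preservingly admissible in $\GentzenS$. The axioms $\RAx{\lev}$ and $\RBotL{\lev}$ have no context that could absorb a weakening formula. For example, with axioms of height $1$, $p \seq{1} p$ has a derivation of height $1$ but $q, p \seq{1} p$ does not. Likewise, a weakening placed below an $\RBoxGL{1}{1}$ step always adds a node; the $\RBoxGL{1}{1}$ step cannot be ``replaced''. The paper stresses exactly this point in its discussion of Petrukhin's proof. So your reduction to $A \limp B \notin \Gamma$ fails. Worse, the case $A \limp B \in \Gamma$ is actually false. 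Take $\Gamma = \{p \limp q, r\}$ and $\Delta = \{r\}$: then $\GentzenS \proves^{2} p \limp q, r \seq{1} r$ (axiom, then $\RWL{1}{1}$), but $p \limp q, r \seq{1} r, p$ has no derivation of height $2$. The statement must be read with $A \limp B \notin \Gamma$, which the paper tacitly assumes when it takes the premise in its $\RWL{\lev}{\lev}$ case to be exactly $\Gamma \seq{\lev} \Delta$.

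Your side-formula case (``apply the induction hypothesis to each premise and reapply $R$'') also breaks for the same set-theoretic reason, even when $A \limp B \notin \Gamma$. The inverted formula can lie in the conclusion's context and at the same time be an active formula of the premise. Consider the instance of $\RAndL{1}{1}$ whose context already contains $p \limp q$: it infers $(p \limp q) \land r, p \limp q \seq{1} r$ (height $3$) from $p \limp q, r \seq{1} r$ (height $2$). This conclusion is $A \limp B, \Gamma \seq{1} \Delta$ with $\Gamma = \{(p \limp q) \land r\}$ and $\Delta = \{r\}$. The induction hypothesis on the premise yields $r \seq{1} r, p$, which has lost the active formula $p \limp q$, so $\RAndL{1}{1}$ cannot be reapplied without an extra weakening. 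Indeed, $(p \limp q) \land r \seq{1} r, p$ has no derivation of height $3$: a shortest derivation of it must end with $\RWR{1}{1}$ or $\RAndL{1}{1}$ applied to a sequent that itself needs height $3$. So the height-preserving claim is false as literally stated, and the paper's ``follows immediately from the induction hypothesis'' overlooks the same case.

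One way to rescue your argument is to let height count only non-weakening inferences, and to induct on the derivation itself. Then weakening costs nothing, both problematic cases are closed by a free weakening, and the rest of your case analysis goes through. The later uses of the lemma, e.g.\ in Lemma~\ref{lem:embedding_S_to_SPL_general}, would then need a matching induction measure.
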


\begin{proof}
  We prove the two claims simultaneously by induction on the height of the proof-tree.
  In order to derive $A \limp B, \Gamma \seq{\lev} \Delta$, the rules in which $A \limp B$ can occur as the principal formula are the two rules $\RWL{\lev}{\lev}$ and $\RImpL{\lev}{\lev}$.
  For the other rules, and when $A \limp B$ is not principal, the claim follows immediately from the induction hypothesis.
  The case of $\RImpL{\lev}{\lev}$ is also clear.
  In the case of $\RWL{\lev}{\lev}$ we have $\GentzenS \proves^{h-1} \Gamma \seq{\lev} \Delta$, so by means of $\RWR{\lev}{\lev}$ or $\RWL{\lev}{\lev}$ we obtain $\GentzenS \proves^h \Gamma \seq{\lev} \Delta, A$ and $\GentzenS \proves^h B, \Gamma \seq{\lev} \Delta$.
\end{proof}

\begin{lem}[Inversion rule $\Inv{\RImpR{\lev}{\lev}}$] \label{lem:GentzenS_inversion_ImpR}
  In $\GentzenS$ the inversion rule $\Inv{\RImpR{\lev}{\lev}}$ is height-preservingly admissible.
  \begin{prooftree}
    \AxiomC{$\GentzenS \proves^h \Gamma \seq{\lev} \Delta, A \limp B$}
    \RightLabel{$\Inv{\RImpR{\lev}{\lev}}$}
    \UnaryInfC{$\GentzenS \proves^h A, \Gamma \seq{\lev} \Delta, B$}
  \end{prooftree}
\end{lem}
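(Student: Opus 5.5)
We argue by induction on the height $h$ of a proof-tree of $\Gamma \seq{\lev} \Delta, A \limp B$, following the pattern of Lemma \ref{lem:GentzenS_inversion_ImpL}. We split into cases according to the last rule applied and whether $A \limp B$ is principal in it. Because sequents are built from sets, $A \limp B$ may already occur in $\Delta$. We therefore treat the conclusion as $\Gamma \seq{\lev} \Delta', A \limp B$ with $A \limp B \notin \Delta'$, and apply the induction hypothesis to whichever premises still contain $A \limp B$ on the right.

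Axioms $\RAx{\lev}$ and $\RBotL{\lev}$ cannot have $A \limp B$ on the right. The case $\RAx{\lev}$ is excluded because its succedent is a variable, and $\RBotL{\lev}$ has an empty succedent. If the last rule is $\RImpR{\lev}{\lev}$ with $A \limp B$ principal, its premise is already $A, \Gamma \seq{\lev} \Delta, B$ of height $h-1$. If the premise still contains $A \limp B$ as a side formula, we apply the induction hypothesis to it. If the last rule is $\RWR{\lev}{\lev}$ introducing $A \limp B$, we have $\proves^{h-1} \Gamma \seq{\lev} \Delta$. One weakening on each side would cost two steps and overshoot height $h$. To avoid this, we first use height-preserving admissibility of weakening, proved by a routine induction, to obtain $\proves^{h-1} A, \Gamma \seq{\lev} \Delta$. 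A single $\RWR{\lev}{\lev}$ then gives $\proves^h A, \Gamma \seq{\lev} \Delta, B$.

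For every other one-level rule ($\RWL{}{}$, the $\land$, $\lor$, $\limp$ rules at level $\lev$, and $\RBoxL{2}{2}$), $A \limp B$ lies in the right-hand context of every premise. We apply the induction hypothesis to each premise and then reapply the same rule, which preserves height. For $\RLift{1}{2}$, the premise is $\Gamma \seq{1} \Delta, A \limp B$. The induction hypothesis at level $1$ gives $A, \Gamma \seq{1} \Delta, B$, and lifting yields the level-$2$ sequent. This is why we prove the claim uniformly in $\lev$.

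The main obstacle is $\RBoxGL{1}{1}$. Its conclusion $\Box\Gamma \seq{1} \Box C$ has a single boxed formula on the right, so it never has the shape $\Gamma \seq{1} \Delta, A \limp B$: the formula $A \limp B$ is not boxed. The case is therefore vacuous. Besides this, we must state and verify height-preserving weakening for the $\RWR{\lev}{\lev}$ case. That induction is straightforward except at $\RBoxGL{1}{1}$, where adding a formula to the conclusion is not height-preserving. There we instead derive the weakened sequent at the same height by replacing the final rule with the weakening itself.
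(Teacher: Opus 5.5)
\textbf{The gap: height-preserving weakening.} Your case split matches the paper's. Axioms and $\RBoxGL{1}{1}$ are vacuous, the principal $\RImpR{\lev}{\lev}$ case is immediate, every other rule is handled by the induction hypothesis and re-application, and $\RLift{1}{2}$ is covered by proving the claim for both levels at once. You also correctly isolate the one delicate point. When $A \limp B$ is introduced by $\RWR{\lev}{\lev}$, getting from $\GentzenS \proves^{h-1} \Gamma \seq{\lev} \Delta$ to $A, \Gamma \seq{\lev} \Delta, B$ takes one weakening on each side. The paper says this case is handled ``in the same way'' as for $\Inv{\RImpL{\lev}{\lev}}$, but there each target needs only one weakening, so the paper passes over exactly the overshoot you noticed.

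Your repair fails, though: height-preserving weakening is not admissible in $\GentzenS$, as the paper itself says in Section~\ref{sect:errors_in_Petrukhin}. The base case of your ``routine induction'' already breaks, because $\RAx{\lev}$ has no context. The sequent $p \seq{1} p$ is an axiom, but $q, p \seq{1} p$ can only be reached by an explicit $\RWL{1}{1}$. The same thing happens at $\RBoxGL{1}{1}$, whose conclusion needs a purely boxed antecedent. ``Replacing the final rule with the weakening itself'' does not help there: the premise of $\RBoxGL{1}{1}$ is a different sequent, and weakening its conclusion costs a step.

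\textbf{The statement fails as written.} No argument can close this gap, because the statement is false for the usual notion of height. Let $h_0$ be the height of the axiom $p \seq{1} p$. One $\RWR{1}{1}$ gives $\GentzenS \proves^{h_0+1} p \seq{1} p, q \limp r$. Now consider $q, p \seq{1} p, r$. It is not an axiom, contains no connective, and is of level $1$ with an unboxed antecedent. So any derivation of it must end with $\RWL{1}{1}$ or $\RWR{1}{1}$, whose premise ($q, p \seq{1} p$, $p \seq{1} p, r$, etc.) is again not an axiom. Hence it needs height $h_0+2$.

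\textbf{What can be salvaged.} The inversion does hold if height does not count weakening inferences. Then the two weakenings in the $\RWR{\lev}{\lev}$ case are free, and all your other cases go through unchanged, by induction on the derivation. The cost appears downstream. Lemma~\ref{lem:embedding_S_to_SPL_general} applies the inversions and then its induction hypothesis. Its induction must therefore run on the pair (weakening-free height, size of derivation), ordered lexicographically, with the weakening cases decreasing only the second component. Alternatively, you can use a G3-style variant with contexts in $\RAx{\lev}$, $\RBotL{\lev}$ and $\RBoxGL{1}{1}$ and no primitive weakening. There weakening is height-preservingly admissible and the problematic case disappears.
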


\begin{proof}
  We show by induction on the height of the proof-tree.
  In order to derive $\Gamma \seq{\lev} \Delta, A \limp B$, the rules in which $A \limp B$ can occur as the principal formula are the two rules $\RWR{\lev}{\lev}$ and $\RImpR{\lev}{\lev}$.
  For the other rules, and when $A \limp B$ is not principal, the claim follows immediately from the induction hypothesis.
  These two rules are handled in the same way as in the case of $\Inv{\RImpL{\lev}{\lev}}$.
\end{proof}

\section{Sequent calculus $\GentzenSPL$ for propositional logic $\LogicSPL$} \label{sect:GentzenSPL}

We define $\GentzenSPL$, a modification of the sequent calculus for $\LogicSPL$ due to Petrukhin \cite{Pet23}.

\begin{defn}
  Take natural numbers $k \geq 0$ and $n < 2^k$, and let $n = \sum_{i=0}^{k-1} 2^{i} c_i$ be the $k$-bit binary representation of $n$.
  We then set $\langle n \rangle^k_i = c_i$.
  To be explicit, $\langle 0 \rangle^0_0 = 0$.
  Moreover, regarding $n$ as a $k$-bit string, we write $\bar{n}$ for its one's complement.
  That is, for $k \geq 1$ we have $\langle \bar{n} \rangle^k_i = 1 - \langle n \rangle^k_i$, and $\langle \bar{0} \rangle^0_0 = 0$.
\end{defn}

\begin{defn}
  Let $\Gamma, \Delta$ be sets of formulas and let $\lev = 1, 2$.
  An expression of the form $\Gamma \seq{\lev} \Delta$ is called a \emph{sequent}.
  The sequent calculus $\GentzenSPL$ for $\LogicSPL$, in which sequents of level $\lev = 1, 2$ occur, is defined by the following rules.

  \begin{center}
    \begin{minipage}{0.46\textwidth}\centering
      \begin{prooftree}
        \AxiomC{}
        \RightLabel{$\RAx{\lev}$}
        \UnaryInfC{$p \seq{\lev} p$}
      \end{prooftree}
      where $p \in \PropVar$.
    \end{minipage}\hfill
    \begin{minipage}{0.46\textwidth}\centering
      \begin{prooftree}
        \AxiomC{}
        \RightLabel{$\RBotL{\lev}$}
        \UnaryInfC{$\bot \seq{\lev} {}$}
      \end{prooftree}
    \end{minipage}

    \medskip

    \begin{minipage}{0.46\textwidth}\centering
      \begin{prooftree}
        \AxiomC{$\Gamma \seq{\lev} \Delta$}
        \RightLabel{$\RWL{\lev}{\lev}$}
        \UnaryInfC{$A, \Gamma \seq{\lev} \Delta$}
      \end{prooftree}
    \end{minipage}\hfill
    \begin{minipage}{0.46\textwidth}\centering
      \begin{prooftree}
        \AxiomC{$\Gamma \seq{\lev} \Delta$}
        \RightLabel{$\RWR{\lev}{\lev}$}
        \UnaryInfC{$\Gamma \seq{\lev} \Delta, A$}
      \end{prooftree}
    \end{minipage}

    \medskip

    \begin{prooftree}
      \AxiomC{$\Gamma \seq{1} \Delta$}
      \RightLabel{$\RLift{1}{2}$}
      \UnaryInfC{$\Gamma \seq{2} \Delta$}
    \end{prooftree}

    \medskip

    \begin{minipage}{0.46\textwidth}\centering
      \begin{prooftree}
        \AxiomC{$A, B, \Gamma \seq{\lev} \Delta$}
        \RightLabel{$\RAndL{\lev}{\lev}$}
        \UnaryInfC{$A \land B, \Gamma \seq{\lev} \Delta$}
      \end{prooftree}
    \end{minipage}\hfill
    \begin{minipage}{0.46\textwidth}\centering
      \begin{prooftree}
        \AxiomC{$\Gamma \seq{\lev} \Delta, A$}
        \AxiomC{$\Gamma \seq{\lev} \Delta, B$}
        \RightLabel{$\RAndR{\lev}{\lev}$}
        \BinaryInfC{$\Gamma \seq{\lev} \Delta, A \land B$}
      \end{prooftree}
    \end{minipage}

    \medskip

    \begin{minipage}{0.46\textwidth}\centering
      \begin{prooftree}
        \AxiomC{$A, \Gamma \seq{\lev} \Delta$}
        \AxiomC{$B, \Gamma \seq{\lev} \Delta$}
        \RightLabel{$\ROrL{\lev}{\lev}$}
        \BinaryInfC{$A \lor B, \Gamma \seq{\lev} \Delta$}
      \end{prooftree}
    \end{minipage}\hfill
    \begin{minipage}{0.46\textwidth}\centering
      \begin{prooftree}
        \AxiomC{$\Gamma \seq{\lev} \Delta, A, B$}
        \RightLabel{$\ROrR{\lev}{\lev}$}
        \UnaryInfC{$\Gamma \seq{\lev} \Delta, A \lor B$}
      \end{prooftree}
    \end{minipage}

    \medskip

    \begin{prooftree}
      \AxiomC{$\Gamma \seq{2} \Delta, A$}
      \AxiomC{$B, \Gamma \seq{2} \Delta$}
      \RightLabel{$\RImpL{2}{2}$}
      \BinaryInfC{$A \limp B, \Gamma \seq{2} \Delta$}
    \end{prooftree}

    \medskip

    \begin{prooftree}
      \AxiomC{$\Delta_i, \Sigma, A \limp B, A \seq{1} B, \Gamma_i : 0 \leq i < 2^k$}
      \RightLabel{$\RImpR{1}{1}$}
      \UnaryInfC{$\Sigma, \{C_j \limp D_j : 0 \le j < k\} \seq{1} A \limp B$}
    \end{prooftree}

    where $k \geq 0$ and the formulas $C_0, \dots, C_{k-1}$ and $D_0, \dots, D_{k-1}$ are arbitrary.
    Moreover, for $0 \leq i < 2^k$ we set $\Delta_i := \{D_j : \langle i \rangle^k_j = 1\}$ and $\Gamma_i := \{C_j : \langle \bar{i} \rangle^k_j = 1\}$.
    The expression displayed as the premise of $\RImpR{1}{1}$ denotes the family of the $2^k$ sequents obtained by letting $i$ range over $0 \leq i < 2^k$; that is, the rule $\RImpR{1}{1}$ has $2^k$ premises.
  \end{center}
\end{defn}

Note that $\RImpR{1}{1}$ corresponds to the rule in the sequent calculus for $\LogicFPL$ given in \cite{IKK01}.

\begin{ex} \label{ex:GentzenSPL_ImpR}
  The rule $\RImpR{1}{1}$ for $k = 0, 1, 2$ takes the following forms.

  \medskip\noindent
  $k = 0$:
  \begin{prooftree}
    \AxiomC{$\Sigma, A \limp B, A \seq{1} B$}
    \RightLabel{$\RImpR{1}{1}$}
    \UnaryInfC{$\Sigma \seq{1} A \limp B$}
  \end{prooftree}

  \medskip\noindent
  $k = 1$:
  \begin{prooftree}
    \AxiomC{$D_0, \Sigma, A \limp B, A \seq{1} B$}
    \AxiomC{$\Sigma, A \limp B, A \seq{1} B, C_0$}
    \RightLabel{$\RImpR{1}{1}$}
    \BinaryInfC{$\Sigma, C_0 \limp D_0 \seq{1} A \limp B$}
  \end{prooftree}

  \medskip\noindent
  $k = 2$:
  \pagecenter{\scalebox{0.85}{%
      \AxiomC{$D_0, D_1, \Sigma, A \limp B, A \seq{1} B$}
      \AxiomC{$D_0, \Sigma, A \limp B, A \seq{1} B, C_1$}
      \AxiomC{$D_1, \Sigma, A \limp B, A \seq{1} B, C_0$}
      \AxiomC{$\Sigma, A \limp B, A \seq{1} B, C_0, C_1$}
      \RightLabel{$\RImpR{1}{1}$}
      \QuaternaryInfC{$\Sigma, C_0 \limp D_0, C_1 \limp D_1 \seq{1} A \limp B$}
      \DisplayProof
    }}
\end{ex}

As in Section \ref{sect:GentzenS}, although the rule $\RAx{\lev}$ is stated only for propositional variables, the identity sequent is derivable for every formula.

\begin{lem} \label{lem:GentzenSPL_axioms}
  For every $A \in \FmlPL$, $\GentzenSPL \proves A \seq{\lev} A$.
\end{lem}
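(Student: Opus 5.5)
We argue by induction on the structure of $A$, uniformly in $\lev = 1, 2$, following the same pattern as Lemma \ref{lem:GentzenS_original_axioms}.

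The cases $A \equiv p$ and $A \equiv \bot$ follow from $\RAx{\lev}$, and from $\RBotL{\lev}$ followed by $\RWR{\lev}{\lev}$, respectively. The cases $A \equiv B \land C$ and $A \equiv B \lor C$ are identical to those in $\GentzenS$. The rules for $\land$ and $\lor$ in $\GentzenSPL$ are exactly the same as there and are available at both levels. We combine the induction hypotheses $B \seq{\lev} B$ and $C \seq{\lev} C$ with weakening.

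The only new case is $A \equiv B \limp C$, because the implication rules now differ between levels. We treat level $1$ first and then obtain level $2$ from it.

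For $\lev = 1$, we apply $\RImpR{1}{1}$ with $k = 1$, $\Sigma = \emptyset$, $C_0 = B$ and $D_0 = C$, deriving $B \limp C \seq{1} B \limp C$. By Example \ref{ex:GentzenSPL_ImpR}, the two premises are
\[
C, B \limp C, B \seq{1} C
\qquad\text{and}\qquad
B \limp C, B \seq{1} C, B.
\]
The first follows from the induction hypothesis $C \seq{1} C$ by two applications of $\RWL{1}{1}$. The second follows from $B \seq{1} B$ by $\RWL{1}{1}$ and $\RWR{1}{1}$. Since sequents are built from sets, the conclusion is exactly $B \limp C \seq{1} B \limp C$.

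For $\lev = 2$, we simply apply $\RLift{1}{2}$ to the level-$1$ derivation. Note that there is no $\RImpR{2}{2}$, so a direct level-$2$ derivation is unavailable; the lift is therefore essential here.

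This last point is the one real subtlety. The induction hypothesis is only ever used at level $1$ in the implication case, so carrying out the induction uniformly for both levels is harmless. One could even prove the level-$1$ statement alone by induction and deduce level $2$ by a single application of $\RLift{1}{2}$. The main thing to check is that the instantiation of $\Delta_i$ and $\Gamma_i$ for $k = 1$ is correct: $\Delta_1 = \{D_0\}$ with $\Gamma_1 = \emptyset$, and $\Delta_0 = \emptyset$ with $\Gamma_0 = \{C_0\}$. This matches the displayed $k = 1$ instance.
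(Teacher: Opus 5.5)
Your proposal is correct and follows the paper's own proof: induction on $A$ uniformly in $\lev$, with the implication case handled by the $k = 1$ instance of $\RImpR{1}{1}$ ($\Sigma = \emptyset$, $C_0 \limp D_0 := B \limp C$) and then $\RLift{1}{2}$ for level $2$. Your instantiation of $\Delta_i, \Gamma_i$ and the weakenings used to obtain the two premises are the same as the paper's.
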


\begin{proof}
  We show by induction on $A$, uniformly in $\lev = 1, 2$.
  If $A \equiv p$, the claim follows from $\RAx{\lev}$.
  If $A \equiv \bot$, it follows from $\RBotL{\lev}$ and $\RWR{\lev}{\lev}$.
  The cases $A \equiv B \land C$ and $A \equiv B \lor C$ follow from the induction hypothesis by the rules for $\land$ and $\lor$ together with weakening, as usual.
  For the case $A \equiv B \limp C$, we first derive $B \limp C \seq{1} B \limp C$ by the following instance of $\RImpR{1}{1}$ for $k = 1$ with $\Sigma = \emptyset$ and $C_0 \limp D_0 := B \limp C$.
  \begin{prooftree}
    \AxiomC{\scriptsize(I.H.)}
    \noLine
    \UnaryInfC{$C \seq{1} C$}
    \RightLabel{$\RWL{1}{1}$}
    \UnaryInfC{$C, B \limp C, B \seq{1} C$}
    \AxiomC{\scriptsize(I.H.)}
    \noLine
    \UnaryInfC{$B \seq{1} B$}
    \RightLabel{$\RWL{1}{1}$, $\RWR{1}{1}$}
    \UnaryInfC{$B \limp C, B \seq{1} C, B$}
    \RightLabel{$\RImpR{1}{1}$}
    \BinaryInfC{$B \limp C \seq{1} B \limp C$}
  \end{prooftree}
  When $\lev = 2$, we further apply $\RLift{1}{2}$.
\end{proof}

We define \emph{Solovay Propositional Logic} $\LogicSPL$ as the set of formulas $\{A \in \FmlPL : \GentzenSPL \proves \seqII A\}$.

\section{Embedding of $\GentzenSPL$ into $\GentzenS$} \label{sect:Embedding_GentzenSPL_GentzenS}

We give a detailed proof of the embedding of $\GentzenSPL$ into $\GentzenS$ based on the proofs in \cite{YS17, Pet23}.

\begin{lem} \label{lem:GentzenS_admits_BoxK4}
  The following rule $\RBoxKFour{1}{\lev}$ is admissible in $\GentzenS$.
  \begin{prooftree}
    \AxiomC{$\Box\Gamma, \Gamma \seq{1} A$}
    \RightLabel{$\RBoxKFour{1}{\lev}$}
    \UnaryInfC{$\Box\Gamma \seq{\lev} \Box A$}
  \end{prooftree}
\end{lem}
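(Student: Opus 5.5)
Since $\RBoxGL{1}{1}$ is the only rule of $\GentzenS$ introducing $\Box$ on the right, the plan is to derive its premise $\Box\Gamma, \Gamma, \Box A \seq{1} A$ from the given $\Box\Gamma, \Gamma \seq{1} A$ by a single left weakening $\RWL{1}{1}$ adding $\Box A$. Applying $\RBoxGL{1}{1}$ then yields $\Box\Gamma \seq{1} \Box A$. This settles the case $\lev = 1$. For $\lev = 2$ we append one application of $\RLift{1}{2}$ to obtain $\Box\Gamma \seq{2} \Box A$.

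Concretely, the derivation I would write is
\begin{prooftree}
  \AxiomC{$\Box\Gamma, \Gamma \seq{1} A$}
  \RightLabel{$\RWL{1}{1}$}
  \UnaryInfC{$\Box\Gamma, \Gamma, \Box A \seq{1} A$}
  \RightLabel{$\RBoxGL{1}{1}$}
  \UnaryInfC{$\Box\Gamma \seq{1} \Box A$}
\end{prooftree}
followed, when $\lev = 2$, by $\RLift{1}{2}$.

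One bookkeeping point needs a remark. Because sequents are built from sets, the weakening step may be vacuous if $\Box A$ already belongs to $\Box\Gamma$. In that case the sequent is unchanged and we simply omit the weakening. Since $\Gamma$ is a set, $\RBoxGL{1}{1}$ applies with exactly this $\Gamma$, and no cut or inversion lemma is needed.

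I do not expect a real obstacle here. The only care point is to check that $\RBoxGL{1}{1}$ permits the exact context $\Box\Gamma$ with no extra side formulas on either side, which holds because the conclusion of $\RBoxGL{1}{1}$ has precisely the form $\Box\Gamma \seq{1} \Box A$. The rule is therefore derivable, and not merely admissible.
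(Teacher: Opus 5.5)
Your proposal is correct and matches the paper's proof exactly: weaken with $\Box A$ via $\RWL{1}{1}$, apply $\RBoxGL{1}{1}$ to get $\Box\Gamma \seq{1} \Box A$, and add $\RLift{1}{2}$ when $\lev = 2$. Your remark about vacuous weakening when $\Box A \in \Box\Gamma$ is a harmless extra bookkeeping point that the paper does not need to mention.
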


\begin{proof}
  For $\lev = 1$, the claim is proved by the following proof-tree:
  \begin{prooftree}
    \AxiomC{$\Gamma, \Box\Gamma \seq{1} A$}
    \RightLabel{$\RWL{1}{1}$}
    \UnaryInfC{$\Gamma, \Box\Gamma, \Box A \seq{1} A$}
    \RightLabel{$\RBoxGL{1}{1}$}
    \UnaryInfC{$\Box\Gamma \seq{1} \Box A$}
  \end{prooftree}
  When $\lev = 2$, we further apply $\RLift{1}{2}$ to obtain $\Box\Gamma \seq{2} \Box A$.
\end{proof}

\begin{lem}[cf.\ {\cite[Lemma 3]{YS17}}] \label{lem:embedding_SPL_to_S_aux1}
  For every $A \in \FmlPL$, $\GentzenS \proves A^\Vis \seq{\lev} \Box A^\Vis$.
\end{lem}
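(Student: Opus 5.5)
We argue by induction on $A$, uniformly in $\lev = 1, 2$. Since $\RLift{1}{2}$ is available in $\GentzenS$, it suffices to establish $\GentzenS \proves A^\Vis \seq{1} \Box A^\Vis$ and then lift. The statement is the analogue of the well-known fact that translated formulas are ``persistent'' ($A^\Vis \limp \Box A^\Vis$ in $\LogicKFour$), and the proof mirrors \cite[Lemma 3]{YS17}.

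The cases run as follows.
\begin{itemize}
  \item \emph{Base cases.} For $A \equiv p$, we have $p^\Vis = p \land \Box p$. From $\Box p \seq{1} \Box p$ (Lemma~\ref{lem:GentzenS_original_axioms}) and $p \seq{1} p$, weakening gives $\Box p, p \seq{1} p \land \Box p$ via $\RAndR{1}{1}$. Then $\RBoxKFour{1}{1}$ (Lemma~\ref{lem:GentzenS_admits_BoxK4}) with $\Gamma = \{p\}$ yields $\Box p \seq{1} \Box(p \land \Box p)$. Weakening and $\RAndL{1}{1}$ then give $p \land \Box p \seq{1} \Box(p\land\Box p)$. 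For $A \equiv \bot$, we use $\RBotL{1}$ and $\RWR{1}{1}$.
  \item \emph{Implication.} For $A \equiv B \limp C$, $A^\Vis = \Box(B^\Vis \limp C^\Vis)$ is already boxed. From the identity $B^\Vis \limp C^\Vis \seq{1} B^\Vis \limp C^\Vis$ (Lemma~\ref{lem:GentzenS_original_axioms}), weaken to $\Box(B^\Vis\limp C^\Vis), B^\Vis \limp C^\Vis \seq{1} \Box(B^\Vis\limp C^\Vis)$? More precisely, we first get $\Box(B^\Vis \limp C^\Vis), B^\Vis\limp C^\Vis \seq{1} \Box(B^\Vis\limp C^\Vis)$ from the boxed identity by weakening. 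Then $\RBoxKFour{1}{1}$ with $\Gamma = \{B^\Vis \limp C^\Vis\}$ gives $\Box(B^\Vis\limp C^\Vis) \seq{1} \Box\Box(B^\Vis\limp C^\Vis)$. This is the 4-axiom instance and needs no induction hypothesis.
  \item \emph{Conjunction.} For $A \equiv B \land C$, the induction hypothesis gives $B^\Vis \seq{1} \Box B^\Vis$ and $C^\Vis \seq{1} \Box C^\Vis$. We need $\Box B^\Vis, \Box C^\Vis \seq{1} \Box(B^\Vis \land C^\Vis)$, obtained by $\RBoxKFour{1}{1}$ from $\Box B^\Vis, \Box C^\Vis, B^\Vis, C^\Vis \seq{1} B^\Vis\land C^\Vis$. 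That premise follows from identities, weakening and $\RAndR{1}{1}$. We then combine the three sequents using $\RCut{1}{1}$, which is admissible by Proposition~\ref{prop:GentzenS_cut_elimination}, and finish with $\RAndL{1}{1}$.
  \item \emph{Disjunction.} For $A \equiv B \lor C$, we similarly derive $\Box B^\Vis \seq{1} \Box(B^\Vis\lor C^\Vis)$ by $\RBoxKFour{1}{1}$ from $\Box B^\Vis, B^\Vis \seq{1} B^\Vis \lor C^\Vis$, and symmetrically for $C$. Cutting with the induction hypotheses and applying $\ROrL{1}{1}$ completes the case.
\end{itemize}

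The use of cut can be avoided if a cut-free derivation is desired, but since admissibility of cut is already stated, using it is legitimate and simplest. The only mildly delicate point is bookkeeping in $\RBoxKFour{1}{1}$: its antecedent must consist solely of boxed formulas $\Box\Gamma$ together with the unboxed $\Gamma$. This is why the weakenings are arranged so that the premise has exactly the form $\Box\Gamma, \Gamma \seq{1} \cdot$ before the rule is applied. No genuine obstacle is expected; the variable case, where the conjunct $p$ must be retained in the premise of $\RBoxKFour{1}{1}$, is the step where Visser's choice $p \land \Box p$ is essential.
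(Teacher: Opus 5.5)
Your proposal is correct and follows essentially the paper's route. Both argue by induction on $A$: $\RBoxKFour{1}{\lev}$ handles the variable and implication cases, and in the conjunction case an admissible cut combines the induction hypotheses with a sequent obtained by $\RBoxKFour{1}{\lev}$; you also spell out the disjunction case, which the paper omits. The one difference is that you prove everything at level $1$ and then apply $\RLift{1}{2}$, whereas the paper works directly at level $\lev$. That is harmless in $\GentzenS$, but the paper's uniform version is what lets the same proof be reused for $\GentzenD$ at level $3$: there $\RLift{2}{3}$ only applies to all-boxed sequents, so it cannot lift a sequent such as $p \land \Box p \seq{2} \Box(p \land \Box p)$.
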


\begin{proof}
  We show by induction on $A$. The cases $A \equiv \bot$ and $A \equiv B \lor C$ are omitted.

  \medskip\noindent
  \textbf{Case} $A \equiv p$:
  \begin{prooftree}
    \AxiomC{\scriptsize(Lemma \ref{lem:GentzenS_original_axioms}, $\RWL{1}{1}$)}
    \noLine
    \UnaryInfC{$\Box p, p \seq{1} p$}
    \AxiomC{\scriptsize(Lemma \ref{lem:GentzenS_original_axioms}, $\RWL{1}{1}$)}
    \noLine
    \UnaryInfC{$\Box p, p \seq{1} \Box p$}
    \RightLabel{$\RAndR{1}{1}$}
    \BinaryInfC{$\Box p, p \seq{1} p \land \Box p$}
    \RightLabel{$\RBoxKFour{1}{\lev}$}
    \UnaryInfC{$\Box p \seq{\lev} \Box(p \land \Box p)$}
    \RightLabel{$\RWL{\lev}{\lev}$}
    \UnaryInfC{$p, \Box p \seq{\lev} \Box(p \land \Box p)$}
    \RightLabel{$\RAndL{\lev}{\lev}$}
    \UnaryInfC{$p \land \Box p \seq{\lev} \Box(p \land \Box p)$}
  \end{prooftree}

  \medskip\noindent
  \textbf{Case} $A \equiv B \land C$:
  \pagecenter{%
    \AxiomC{\scriptsize(I.H.)}
    \noLine
    \UnaryInfC{$B^\Vis \seq{\lev} \Box B^\Vis$}
    \RightLabel{$\RWL{\lev}{\lev}$}
    \UnaryInfC{$B^\Vis, C^\Vis \seq{\lev} \Box B^\Vis$}
    \AxiomC{\scriptsize(I.H.)}
    \noLine
    \UnaryInfC{$C^\Vis \seq{\lev} \Box C^\Vis$}
    \RightLabel{$\RWL{\lev}{\lev}$}
    \UnaryInfC{$B^\Vis, C^\Vis \seq{\lev} \Box C^\Vis$}
    \RightLabel{$\RAndR{\lev}{\lev}$}
    \BinaryInfC{$B^\Vis, C^\Vis \seq{\lev} \Box B^\Vis \land \Box C^\Vis$}
    \AxiomC{\scriptsize(Lemma \ref{lem:GentzenS_original_axioms}, $\RWL{1}{1}$)}
    \noLine
    \UnaryInfC{$B^\Vis, C^\Vis, \Box B^\Vis, \Box C^\Vis \seq{1} B^\Vis$}
    \AxiomC{\scriptsize(Lemma \ref{lem:GentzenS_original_axioms}, $\RWL{1}{1}$)}
    \noLine
    \UnaryInfC{$B^\Vis, C^\Vis, \Box B^\Vis, \Box C^\Vis \seq{1} C^\Vis$}
    \RightLabel{$\RAndR{1}{1}$}
    \BinaryInfC{$B^\Vis, C^\Vis, \Box B^\Vis, \Box C^\Vis \seq{1} B^\Vis \land C^\Vis$}
    \RightLabel{$\RBoxKFour{1}{\lev}$}
    \UnaryInfC{$\Box B^\Vis, \Box C^\Vis \seq{\lev} \Box(B^\Vis \land C^\Vis)$}
    \RightLabel{$\RAndL{\lev}{\lev}$}
    \UnaryInfC{$\Box B^\Vis \land \Box C^\Vis \seq{\lev} \Box(B^\Vis \land C^\Vis)$}
    \RightLabel{$\RCut{\lev}{\lev}$}
    \BinaryInfC{$B^\Vis, C^\Vis \seq{\lev} \Box(B^\Vis \land C^\Vis)$}
    \RightLabel{$\RAndL{\lev}{\lev}$}
    \UnaryInfC{$B^\Vis \land C^\Vis \seq{\lev} \Box(B^\Vis \land C^\Vis)$}
    \DisplayProof
  }
  Here the use of the cut rule $\RCut{\lev}{\lev}$ is legitimate by Proposition \ref{prop:GentzenS_cut_elimination}; we use the cut rule freely in proof-trees in what follows.

  \medskip\noindent
  \textbf{Case} $A \equiv B \limp C$:
  \begin{prooftree}
    \AxiomC{\scriptsize(Lemma \ref{lem:GentzenS_original_axioms}, $\RWL{1}{1}$)}
    \noLine
    \UnaryInfC{$\Box(B^\Vis \limp C^\Vis), B^\Vis \limp C^\Vis \seq{1} \Box(B^\Vis \limp C^\Vis)$}
    \RightLabel{$\RBoxKFour{1}{\lev}$}
    \UnaryInfC{$\Box(B^\Vis \limp C^\Vis) \seq{\lev} \Box\Box(B^\Vis \limp C^\Vis)$}
  \end{prooftree}
\end{proof}

\begin{lem}[cf.\ {\cite[Lemma 5]{YS17}}] \label{lem:embedding_SPL_to_S_aux2}
  The following rule is admissible in $\GentzenS$.
  \begin{prooftree}
    \AxiomC{$\Sigma^\Vis, \Gamma, \Box A \seq{1} A$}
    \UnaryInfC{$\Sigma^\Vis, \Box\Gamma \seq{\lev} \Box A$}
  \end{prooftree}
\end{lem}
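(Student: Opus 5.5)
The plan is to reduce the rule to the rule $\RBoxGL{1}{1}$ by first boxing the translated context $\Sigma^\Vis$ via Lemma \ref{lem:embedding_SPL_to_S_aux1}, and then removing the boxed copies with cuts.

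Assume $\GentzenS \proves \Sigma^\Vis, \Gamma, \Box A \seq{1} A$, and write $\Sigma = \{S_1, \dots, S_m\}$. The argument has three steps.

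\textbf{Step 1 (strengthening the premise).} By $\RWL{1}{1}$ we add $\Box\Sigma^\Vis$ and $\Box\Gamma$ to the left, obtaining
\[
\Box\Sigma^\Vis, \Sigma^\Vis, \Box\Gamma, \Gamma, \Box A \seq{1} A .
\]
This sequent has exactly the shape of the premise of $\RBoxGL{1}{1}$ for the context $\Sigma^\Vis \cup \Gamma$. Applying $\RBoxGL{1}{1}$ gives
\[
\Box\Sigma^\Vis, \Box\Gamma \seq{1} \Box A ,
\]
and, when $\lev = 2$, $\RLift{1}{2}$ lifts this to $\Box\Sigma^\Vis, \Box\Gamma \seq{\lev} \Box A$.

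\textbf{Step 2 (removing the boxes).} The boxed formulas $\Box S_j^\Vis$ must now be replaced by $S_j^\Vis$. Lemma \ref{lem:embedding_SPL_to_S_aux1} provides $\GentzenS \proves S_j^\Vis \seq{\lev} \Box S_j^\Vis$ for each $j$. We cut successively on $\Box S_1^\Vis, \dots, \Box S_m^\Vis$ with $\RCut{\lev}{\lev}$. Each cut trades one $\Box S_j^\Vis$ on the left for $S_j^\Vis$ and leaves the right-hand side $\Box A$ unchanged, since $\Delta_1 = \emptyset$ in every cut. After all $m$ cuts we reach
\[
\Sigma^\Vis, \Box\Gamma \seq{\lev} \Box A .
\]
The cuts are legitimate by Proposition \ref{prop:GentzenS_cut_elimination}, exactly as in the proof of Lemma \ref{lem:embedding_SPL_to_S_aux1}. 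Finiteness of $\Sigma$ is harmless, because sequents are finite.

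\textbf{Step 3 (bookkeeping).} Because sequents are sets, a formula may lie in both $\Sigma^\Vis$ and $\Box\Gamma$, or a formula $\Box S_j^\Vis$ may already belong to $\Box\Gamma$. If $\Box S_j^\Vis \in \Box\Gamma$, we skip the cut on it and keep it. Every other $\Box S_j^\Vis$ enters only through the weakening in Step 1 and is removed by its cut. So the end sequent has precisely $\Sigma^\Vis, \Box\Gamma$ on the left.

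The only delicate point is the observation in Step 1 that the premise, weakened with $\Box\Sigma^\Vis$ and $\Box\Gamma$, really is an instance of the $\RBoxGL{1}{1}$ premise for the combined context. Once that is checked, Steps 2 and 3 are routine applications of Lemma \ref{lem:embedding_SPL_to_S_aux1} and cut-admissibility.
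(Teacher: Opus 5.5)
Your proposal is correct and is essentially the paper's proof: weaken by $\Box\Sigma^\Vis, \Box\Gamma$, apply $\RBoxGL{1}{1}$, and cut away $\Box\Sigma^\Vis$ using Lemma~\ref{lem:embedding_SPL_to_S_aux1}. The only difference in route is that you lift to level $\lev$ before cutting, whereas the paper cuts at level $1$ and lifts afterwards; both orders work, and yours is the one the paper uses for $\GentzenD$.

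There is one small slip in Step 3. If $\Box S_j^\Vis \in \Box\Gamma$ and you skip the cut, $S_j^\Vis$ never reappears on the left, so the end sequent is not yet $\Sigma^\Vis, \Box\Gamma \seq{\lev} \Box A$. Either perform the cut anyway with $\Box S_j^\Vis$ kept in the context $\Gamma_2$ (the notation $A, \Gamma_2$ allows $A \in \Gamma_2$), or add $S_j^\Vis$ back with $\RWL{\lev}{\lev}$.
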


\begin{proof}
  First, we obtain $\Box\Sigma^\Vis, \Box\Gamma \seq{1} \Box A$ as follows.
  \begin{prooftree}
    \AxiomC{$\Sigma^\Vis, \Gamma, \Box A \seq{1} A$}
    \RightLabel{$\RWL{1}{1}$}
    \UnaryInfC{$\Box\Sigma^\Vis, \Sigma^\Vis, \Box\Gamma, \Gamma, \Box A \seq{1} A$}
    \RightLabel{$\RBoxGL{1}{1}$}
    \UnaryInfC{$\Box\Sigma^\Vis, \Box\Gamma \seq{1} \Box A$}
  \end{prooftree}
  Now, by Lemma \ref{lem:embedding_SPL_to_S_aux1}, for every $B \in \Sigma$ we have $\GentzenS \proves B^\Vis \seq{1} \Box B^\Vis$.
  Hence, by applying $\RCut{1}{1}$ an appropriate number of times to successively replace the elements of $\Box\Sigma^\Vis$, we obtain $\Sigma^\Vis, \Box\Gamma \seq{1} \Box A$.
  When $\lev = 2$, we further apply $\RLift{1}{2}$.
\end{proof}

\begin{lem}[{\cite[Lemma 4]{Pet23}}] \label{lem:embedding_SPL_to_S}
  If $\GentzenSPL \proves \Gamma \seq{\lev} \Delta$, then $\GentzenS \proves \Gamma^\Vis \seq{\lev} \Delta^\Vis$.
\end{lem}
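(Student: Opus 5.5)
Induction on the height of the $\GentzenSPL$-derivation of $\Gamma \seq{\lev} \Delta$, with a case split on the last rule; cut is used freely by Proposition~\ref{prop:GentzenS_cut_elimination}.

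\textbf{Routine cases.}
\begin{itemize}
\item $\RAx{\lev}$: we need $p \land \Box p \seq{\lev} p \land \Box p$, an instance of Lemma~\ref{lem:GentzenS_original_axioms}.
\item $\RBotL{\lev}$, $\RWL{\lev}{\lev}$, $\RWR{\lev}{\lev}$, $\RLift{1}{2}$ and the rules for $\land$ and $\lor$: these commute with $(\cdot)^\Vis$, so the corresponding $\GentzenS$-rule applies to the induction hypothesis.
\item $\RImpL{2}{2}$: the induction hypothesis gives $\Gamma^\Vis \seq{2} \Delta^\Vis, A^\Vis$ and $B^\Vis, \Gamma^\Vis \seq{2} \Delta^\Vis$. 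Then $\RImpL{2}{2}$ of $\GentzenS$ yields $A^\Vis \limp B^\Vis, \Gamma^\Vis \seq{2} \Delta^\Vis$, and $\RBoxL{2}{2}$ turns this into $\Box(A^\Vis \limp B^\Vis), \Gamma^\Vis \seq{2} \Delta^\Vis$. This is exactly the step that needs level~2, since $\RBoxL{2}{2}$ exists only there.
\end{itemize}

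\textbf{Main case $\RImpR{1}{1}$.} The induction hypothesis gives, for each $0 \le i < 2^k$,
\[
\Delta_i^\Vis, \Sigma^\Vis, \Box(A^\Vis \limp B^\Vis), A^\Vis \seq{1} B^\Vis, \Gamma_i^\Vis .
\]
The target is $\Sigma^\Vis, \{\Box(C_j^\Vis \limp D_j^\Vis)\}_{j<k} \seq{1} \Box(A^\Vis \limp B^\Vis)$. We aim to apply Lemma~\ref{lem:embedding_SPL_to_S_aux2} with its $\Gamma$ taken to be $\{C_j^\Vis \limp D_j^\Vis\}_{j<k}$. So it suffices to derive
\[
\Sigma^\Vis, \{C_j^\Vis \limp D_j^\Vis\}_{j<k}, \Box(A^\Vis \limp B^\Vis) \seq{1} A^\Vis \limp B^\Vis .
\]
By $\RImpR{1}{1}$ of $\GentzenS$, this reduces to
\[
A^\Vis, \Sigma^\Vis, \{C_j^\Vis \limp D_j^\Vis\}_{j<k}, \Box(A^\Vis \limp B^\Vis) \seq{1} B^\Vis .
\]
Derive this by decomposing the implications $C_j^\Vis \limp D_j^\Vis$ one at a time with $\RImpL{1}{1}$ (classical at level~1 in $\GentzenS$). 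Each application sends $D_j^\Vis$ to the left or $C_j^\Vis$ to the right. This produces a binary tree of depth $k$ whose $2^k$ leaves are in bijection with the bit strings $i$: bit $j$ equal to $1$ means $D_j$ went left, and bit $j$ equal to $0$ means $C_j$ went right. That is precisely the pair $\Delta_i$, $\Gamma_i$. Each leaf is, up to weakening, the induction hypothesis for that $i$, so the case closes.

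\textbf{Expected obstacle.} The delicate bookkeeping is matching the binary-representation indexing $\langle i \rangle^k_j$ and $\langle \bar{i} \rangle^k_j$ with the leaves of the $\RImpL{1}{1}$ tree, and handling $k=0$ separately. The other point to check is that Lemma~\ref{lem:embedding_SPL_to_S_aux2} really applies: its $\Sigma$ must consist of translated formulas, which holds here, while its $\Gamma$ may be arbitrary modal formulas. This is presumably where a naive treatment goes wrong: a $\Sigma^\Vis$ formula is not boxed, so we must use $\Sigma^\Vis \seq{1} \Box\Sigma^\Vis$ via Lemma~\ref{lem:embedding_SPL_to_S_aux1}, which holds only because the formulas are translations.
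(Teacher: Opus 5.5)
Your proposal is correct and follows the paper's own route. The $\RImpL{2}{2}$ case goes through $\RImpL{2}{2}$ followed by $\RBoxL{2}{2}$. The $\RImpR{1}{1}$ case recombines the $2^k$ induction hypotheses with $2^k-1$ applications of $\RImpL{1}{1}$, then applies $\RImpR{1}{1}$ and Lemma~\ref{lem:embedding_SPL_to_S_aux2}. The leaves of your $\RImpL{1}{1}$ tree are literally the induction hypotheses, so no weakening is needed there and $k=0$ needs no separate treatment.
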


\begin{proof}
  We show by induction on the height of the proof-tree.
  We treat in particular the cases of $\RImpL{2}{2}$ and $\RImpR{1}{1}$.

  \medskip\noindent
  \textbf{Case} $\RImpL{2}{2}$:
  Suppose the rule we now wish to consider has the following form.
  \begin{prooftree}
    \AxiomC{$\GentzenSPL \proves^{h-1} \Gamma \seq{2} \Delta, A$}
    \AxiomC{$\GentzenSPL \proves^{h-1} B, \Gamma \seq{2} \Delta$}
    \RightLabel{$\RImpL{2}{2}$}
    \BinaryInfC{$\GentzenSPL \proves^{h} A \limp B, \Gamma \seq{2} \Delta$}
  \end{prooftree}
  Then, by the induction hypothesis, we have $\GentzenS \proves \Gamma^\Vis \seq{2} \Delta^\Vis, A^\Vis$ and $\GentzenS \proves B^\Vis, \Gamma^\Vis \seq{2} \Delta^\Vis$.
  Given this, we proceed as follows.
  \begin{prooftree}
    \AxiomC{$\Gamma^\Vis \seq{2} \Delta^\Vis, A^\Vis$}
    \AxiomC{$B^\Vis, \Gamma^\Vis \seq{2} \Delta^\Vis$}
    \RightLabel{$\RImpL{2}{2}$}
    \BinaryInfC{$A^\Vis \limp B^\Vis, \Gamma^\Vis \seq{2} \Delta^\Vis$}
    \RightLabel{$\RBoxL{2}{2}$}
    \UnaryInfC{$\Box(A^\Vis \limp B^\Vis), \Gamma^\Vis \seq{2} \Delta^\Vis$}
  \end{prooftree}
  This is nothing but $\GentzenS \proves (A \limp B)^\Vis, \Gamma^\Vis \seq{2} \Delta^\Vis$.

  \medskip\noindent
  \textbf{Case} $\RImpR{1}{1}$:
  Fix $k$, and suppose the rule we now wish to consider has the following form.
  \begin{prooftree}
    \AxiomC{$\GentzenSPL \proves^{h-1} \Delta_i, \Sigma, A \limp B, A \seq{1} B, \Gamma_i : 0 \leq i < 2^k$}
    \RightLabel{$\RImpR{1}{1}$}
    \UnaryInfC{$\GentzenSPL \proves^{h} \Sigma, \{C_j \limp D_j : 0 \le j < k\} \seq{1} A \limp B$}
  \end{prooftree}
  By the induction hypothesis, for every $i$ with $0 \leq i < 2^k$ we have $\GentzenS \proves \Delta_i^\Vis, \Sigma^\Vis, \Box(A^\Vis \limp B^\Vis), A^\Vis \seq{1} B^\Vis, \Gamma_i^\Vis$.
  By combining these sequents with $2^k - 1$ appropriate applications of $\RImpL{1}{1}$, we obtain
  $\GentzenS \proves \Sigma^\Vis, \{C_j^\Vis \limp D_j^\Vis : 0 \le j < k\}, \Box(A^\Vis \limp B^\Vis), A^\Vis \seq{1} B^\Vis$.
  We illustrate the case $k = 2$ below; the construction is easy to generalize to arbitrary $k$.
  Let $\mathcal{D}_1$ and $\mathcal{D}_2$ be the following proof-trees.
  \pagecenter{%
    \AxiomC{$\Sigma^\Vis, \Box(A^\Vis \limp B^\Vis), A^\Vis \seq{1} B^\Vis, C_0^\Vis, C_1^\Vis$}
    \AxiomC{$D_0^\Vis, \Sigma^\Vis, \Box(A^\Vis \limp B^\Vis), A^\Vis \seq{1} B^\Vis, C_1^\Vis$}
    \RightLabel{$\RImpL{1}{1}$}
    \BinaryInfC{$\Sigma^\Vis, C_0^\Vis \limp D_0^\Vis, \Box(A^\Vis \limp B^\Vis), A^\Vis \seq{1} B^\Vis, C_1^\Vis$}
    \DisplayProof
  }
  \pagecenter{%
    \AxiomC{$D_1^\Vis, \Sigma^\Vis, \Box(A^\Vis \limp B^\Vis), A^\Vis \seq{1} B^\Vis, C_0^\Vis$}
    \AxiomC{$D_0^\Vis, D_1^\Vis, \Sigma^\Vis, \Box(A^\Vis \limp B^\Vis), A^\Vis \seq{1} B^\Vis$}
    \RightLabel{$\RImpL{1}{1}$}
    \BinaryInfC{$D_1^\Vis, \Sigma^\Vis, C_0^\Vis \limp D_0^\Vis, \Box(A^\Vis \limp B^\Vis), A^\Vis \seq{1} B^\Vis$}
    \DisplayProof
  }
  Then, applying $\RImpL{1}{1}$ once more to the end sequents of $\mathcal{D}_1$ and $\mathcal{D}_2$, we obtain the desired sequent.
  \pagecenter{%
    \AxiomC{$\mathcal{D}_1$}
    \noLine
    \UnaryInfC{$\Sigma^\Vis, C_0^\Vis \limp D_0^\Vis, \Box(A^\Vis \limp B^\Vis), A^\Vis \seq{1} B^\Vis, C_1^\Vis$}
    \AxiomC{$\mathcal{D}_2$}
    \noLine
    \UnaryInfC{$D_1^\Vis, \Sigma^\Vis, C_0^\Vis \limp D_0^\Vis, \Box(A^\Vis \limp B^\Vis), A^\Vis \seq{1} B^\Vis$}
    \RightLabel{$\RImpL{1}{1}$}
    \BinaryInfC{$\Sigma^\Vis, C_0^\Vis \limp D_0^\Vis, C_1^\Vis \limp D_1^\Vis, \Box(A^\Vis \limp B^\Vis), A^\Vis \seq{1} B^\Vis$}
    \DisplayProof
  }

  The rest can be carried out as follows.
  \pagecenter{%
    \AxiomC{$\Sigma^\Vis, \{C_j^\Vis \limp D_j^\Vis : 0 \le j < k\}, \Box(A^\Vis \limp B^\Vis), A^\Vis \seq{1} B^\Vis$}
    \RightLabel{$\RImpR{1}{1}$}
    \UnaryInfC{$\Sigma^\Vis, \{C_j^\Vis \limp D_j^\Vis : 0 \le j < k\}, \Box(A^\Vis \limp B^\Vis) \seq{1} A^\Vis \limp B^\Vis$}
    \RightLabel{Lemma \ref{lem:embedding_SPL_to_S_aux2}}
    \UnaryInfC{$\Sigma^\Vis, \{\Box(C_j^\Vis \limp D_j^\Vis) : 0 \le j < k\} \seq{1} \Box(A^\Vis \limp B^\Vis)$}
    \DisplayProof
  }
  This is nothing but $\GentzenS \proves \Sigma^\Vis, \{C_j \limp D_j : 0 \le j < k\}^\Vis \seq{1} (A \limp B)^\Vis$.
\end{proof}

Following the strategy of {\cite[Lemma 7]{YS17}}, we prove the converse direction in the following strengthened form.

\begin{lem} \label{lem:embedding_S_to_SPL_general}
  Let $\Gamma, \Delta$ be sets of propositional formulas and let $\Phi_1, \Phi_2, \Psi$ be sets of propositional variables.
  If $\GentzenS \proves \Gamma^\Vis, \Phi_1, \Box\Phi_2 \seq{\lev} \Psi, \Delta^\Vis$,
  then $\GentzenSPL \proves \Gamma, \Phi_1, \Phi_2 \seq{\lev} \Psi, \Delta$.
\end{lem}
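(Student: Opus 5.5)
We argue by induction on the height $h$ of a cut-free proof in $\GentzenS$ of $\Gamma^\Vis, \Phi_1, \Box\Phi_2 \seq{\lev} \Psi, \Delta^\Vis$, taking the statement simultaneously for all $\Gamma, \Delta, \Phi_1, \Phi_2, \Psi$ and both levels. We split into cases on the last rule and on which formula of the end-sequent is principal.

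The key observation drives the case analysis. Every formula of the end-sequent has one of the following forms:
\begin{itemize}
  \item on the left: a variable (from $\Phi_1$), a boxed variable $\Box p$ (from $\Box\Phi_2$), $\bot$, a conjunction $B^\Vis \land C^\Vis$ or $p \land \Box p$, a disjunction, or a boxed implication $\Box(B^\Vis \limp C^\Vis)$;
  \item on the right: the same kinds of formulas, except that the only atomic ones are variables from $\Psi$.
\end{itemize}
A propositional variable $p$ may be read either as an element of $\Phi_1$/$\Psi$ or as $p \in \Gamma$/$\Delta$ in $\GentzenSPL$. So the axioms and $\RBotL{\lev}$ translate directly. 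Weakenings are either dropped or translated to weakenings of the corresponding formula; for $\Box p$ we weaken by $p$.

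\textbf{Propositional connective cases.} Most of these are immediate.
\begin{itemize}
  \item $\RAndL{\lev}{\lev}$ on $p \land \Box p$ has premise $p, \Box p, \ldots$. We place $p$ in $\Phi_1$ and $p$ in $\Phi_2$, and then apply $\RAndL{\lev}{\lev}$ in $\GentzenSPL$ (the formula $p$ occurs once, since we work with sets).
  \item $\RAndR{\lev}{\lev}$ on $p \land \Box p$ on the right has a second premise $\ldots \seq{\lev} \ldots, \Box p$, which falls outside the scope of the lemma. Here we discard that premise. The first premise $\ldots \seq{\lev} \ldots, p$ gives, by the induction hypothesis with $p \in \Psi$, exactly $\Gamma, \Phi_1, \Phi_2 \seq{\lev} \Psi, \Delta$ with $p \in \Delta$.
  \item The $\land$ and $\lor$ rules on translated formulas $B^\Vis \land C^\Vis$ and $B^\Vis \lor C^\Vis$ go through by the induction hypothesis and the same rule in $\GentzenSPL$.
  \item $\RLift{1}{2}$ goes through by the induction hypothesis and $\RLift{1}{2}$ in $\GentzenSPL$.
  \item The rules $\RImpL{\lev}{\lev}$ and $\RImpR{\lev}{\lev}$ of $\GentzenS$ can never have a principal formula in the end-sequent, since no formula there has $\limp$ as its main connective.
\end{itemize}

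\textbf{Case $\RBoxL{2}{2}$.} If the principal formula is $\Box p$, the premise has $p$ in place of $\Box p$, and we move $p$ to $\Phi_1$. If the principal formula is $\Box(B^\Vis \limp C^\Vis)$, the premise is $B^\Vis \limp C^\Vis, \ldots \seq{2} \ldots$. We apply Lemma \ref{lem:GentzenS_inversion_ImpL} to it, which preserves height, and obtain $\ldots \seq{2} \ldots, B^\Vis$ and $C^\Vis, \ldots \seq{2} \ldots$. The induction hypothesis then applies to both, and $\RImpL{2}{2}$ in $\GentzenSPL$ finishes the case.

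\textbf{Case $\RBoxGL{1}{1}$ (main obstacle).} The conclusion consists only of boxed formulas, with exactly one on the right. The right formula cannot be $\Box p$, since $\Psi$ contains only variables, so it is $\Box(A^\Vis \limp B^\Vis)$. This forces the following:
\begin{itemize}
  \item $\Delta = \{A \limp B\}$ and $\Psi = \emptyset$;
  \item $\Phi_1 = \emptyset$;
  \item $\Gamma$ consists of implications $C_j \limp D_j$ for $j < k$.
\end{itemize}
The premise is then
\[
\{\Box(C_j^\Vis \limp D_j^\Vis)\}, \{C_j^\Vis \limp D_j^\Vis\}, \Box\Phi_2, \Phi_2, \Box(A^\Vis \limp B^\Vis) \seq{1} A^\Vis \limp B^\Vis .
\]
We first apply Lemma \ref{lem:GentzenS_inversion_ImpR} to move $A^\Vis$ to the left and leave $B^\Vis$ on the right. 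We then apply Lemma \ref{lem:GentzenS_inversion_ImpL} once to each $C_j^\Vis \limp D_j^\Vis$, in all $2^k$ combinations. This yields, for each $i < 2^k$, a sequent of height at most $h-1$:
\[
\Delta_i^\Vis, \{\Box(C_j^\Vis \limp D_j^\Vis)\}, \Box\Phi_2, \Phi_2, \Box(A^\Vis \limp B^\Vis), A^\Vis \seq{1} B^\Vis, \Gamma_i^\Vis .
\]
Each of these has the shape required by the lemma: each $\Box(C_j^\Vis \limp D_j^\Vis)$ is $(C_j \limp D_j)^\Vis$, $\Box(A^\Vis \limp B^\Vis)$ is $(A \limp B)^\Vis$, and $\Phi_2$ is counted both in $\Phi_1$ and in $\Phi_2$. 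The induction hypothesis gives
\[
\GentzenSPL \proves \Delta_i, \Sigma, A \limp B, A \seq{1} B, \Gamma_i
\quad\text{with}\quad
\Sigma := \Phi_2 \cup \{C_j \limp D_j : j < k\}.
\]
Now $\RImpR{1}{1}$ yields $\Sigma, \{C_j \limp D_j\} \seq{1} A \limp B$, and this is exactly $\Gamma, \Phi_2 \seq{1} \Delta$.

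The delicate points here are the following.
\begin{itemize}
  \item We must check that the inversions preserve height, so that the induction on $h$ remains legitimate.
  \item We absorb the extra antecedents $C_j \limp D_j$ into $\Sigma$. This is allowed because sequents are sets, so $\Sigma \cup \{C_j \limp D_j\}$ is simply $\Sigma$.
  \item We verify that the bit-indexing of $\Delta_i$ and $\Gamma_i$ produced by the iterated inversions matches the premises of $\RImpR{1}{1}$.
\end{itemize}
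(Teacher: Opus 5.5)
Your proposal is correct and follows the paper's proof essentially step for step. It uses the same height induction generalized over $\Phi_1,\Phi_2,\Psi$, discards the $\Box p$ premise in $\RAndR{\lev}{\lev}$, and moves $p$ from $\Phi_2$ to $\Phi_1$ in $\RBoxL{2}{2}$; in $\RBoxGL{1}{1}$ it applies the height-preserving inversions $\Inv{\RImpR{1}{1}}$ and $\Inv{\RImpL{1}{1}}$ and then $\RImpR{1}{1}$ of $\GentzenSPL$ with $\Sigma = \{C_j \limp D_j\} \cup \Phi_2$.

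One small slip: when $p \land \Box p$ is principal in $\RAndL{\lev}{\lev}$, no rule of $\GentzenSPL$ is applied, since there is no conjunction on the propositional side. The induction hypothesis already gives the target sequent, because $p \in \Gamma$ and antecedents are sets.
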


\begin{proof}
  First, observe that every formula occurring in the assumed sequent is of exactly one of the following forms:
  a translation $A^\Vis$, which is either $\bot$ or has $\land$, $\lor$ or $\Box$ as its outermost connective;
  a propositional variable; or a boxed propositional variable.
  In particular, no formula in the sequent is an implication,
  and hence the last rule of a proof-tree of the assumed sequent is neither $\RImpL{\lev}{\lev}$ nor $\RImpR{\lev}{\lev}$.
  With this in mind, we prove the claim by induction on the height of the proof-tree,
  distinguishing cases according to the last rule and the position of its principal formula.

  Note that the statement is generalized over $\Gamma, \Delta, \Phi_1, \Phi_2$ and $\Psi$:
  in each case below, the premise of the last rule is re-decomposed into the form $\Gamma^\Vis, \Phi_1, \Box\Phi_2 \seq{\lev} \Psi, \Delta^\Vis$, possibly with components different from those of the conclusion, and the induction hypothesis is applied to this decomposition.
  We illustrate this point concretely in the case $\RAndL{\lev}{\lev}$ below.

  \medskip\noindent
  \textbf{Case} $\RAx{\lev}$:
  The sequent is $p \seq{\lev} p$, that is, $\Gamma = \Delta = \Phi_2 = \emptyset$ and $\Phi_1 = \Psi = \{p\}$.
  The rule $\RAx{\lev}$ of $\GentzenSPL$ yields the desired $\GentzenSPL \proves p \seq{\lev} p$.

  \medskip\noindent
  \textbf{Case} $\RBotL{\lev}$:
  The sequent is $\bot \seq{\lev} {}$, that is, $\Gamma = \{\bot\}$ and all the other components are empty.
  The rule $\RBotL{\lev}$ of $\GentzenSPL$ yields the desired sequent.

  \medskip\noindent
  \textbf{Case} $\RWL{\lev}{\lev}$, $\RWR{\lev}{\lev}$, $\RLift{1}{2}$:
  These follow straightforwardly from the induction hypothesis and the corresponding rule of $\GentzenSPL$.
  We only note that no weakening is needed in $\GentzenSPL$ when $\RWL{\lev}{\lev}$ introduces a variable $p \in \Phi_1$ with $p \in \Phi_2$, or a boxed variable $\Box p$ with $p \in \Phi_1$:
  in these cases the antecedent of the sequent given by the induction hypothesis already coincides with $\Gamma, \Phi_1, \Phi_2$ as a set.

  \medskip\noindent
  \textbf{Case} $\RAndL{\lev}{\lev}$:
  The principal formula is either $p^\Vis = p \land \Box p$ for some $p \in \Gamma$, or $(A \land B)^\Vis = A^\Vis \land B^\Vis$ for some $A \land B \in \Gamma$.
  In the former case, the application we are concerned with is the following.
  \pagecenter{%
    \AxiomC{$\GentzenS \proves^{h-1} p, \Box p, (\Gamma \setminus \{p\})^\Vis, \Phi_1, \Box\Phi_2 \seq{\lev} \Psi, \Delta^\Vis$}
    \RightLabel{$\RAndL{\lev}{\lev}$}
    \UnaryInfC{$\GentzenS \proves^{h} p \land \Box p, (\Gamma \setminus \{p\})^\Vis, \Phi_1, \Box\Phi_2 \seq{\lev} \Psi, \Delta^\Vis$}
    \DisplayProof
  }
  Here we re-decompose the premise:
  it is of the form $\Gamma'^\Vis, \Phi_1', \Box\Phi_2' \seq{\lev} \Psi, \Delta^\Vis$ with $\Gamma' := \Gamma \setminus \{p\}$, $\Phi_1' := \Phi_1 \cup \{p\}$ and $\Phi_2' := \Phi_2 \cup \{p\}$,
  which differ from the components $\Gamma, \Phi_1, \Phi_2$ of the conclusion.
  Applying the induction hypothesis to this decomposition yields $\GentzenSPL \proves \Gamma \setminus \{p\}, \Phi_1, \Phi_2, p \seq{\lev} \Psi, \Delta$,
  whose antecedent coincides with $\Gamma, \Phi_1, \Phi_2$ as a set, since $p \in \Gamma$.

  In the latter case, the application we are concerned with is the following.
  \pagecenter{%
    \AxiomC{$\GentzenS \proves^{h-1} A^\Vis, B^\Vis, (\Gamma \setminus \{A \land B\})^\Vis, \Phi_1, \Box\Phi_2 \seq{\lev} \Psi, \Delta^\Vis$}
    \RightLabel{$\RAndL{\lev}{\lev}$}
    \UnaryInfC{$\GentzenS \proves^{h} A^\Vis \land B^\Vis, (\Gamma \setminus \{A \land B\})^\Vis, \Phi_1, \Box\Phi_2 \seq{\lev} \Psi, \Delta^\Vis$}
    \DisplayProof
  }
  The induction hypothesis applied to the premise yields $\GentzenSPL \proves A, B, \Gamma \setminus \{A \land B\}, \Phi_1, \Phi_2 \seq{\lev} \Psi, \Delta$,
  and $\RAndL{\lev}{\lev}$ of $\GentzenSPL$ yields the desired sequent.

  \medskip\noindent
  \textbf{Case} $\RAndR{\lev}{\lev}$:
  The principal formula is either $p^\Vis$ for some $p \in \Delta$, or $(A \land B)^\Vis$ for some $A \land B \in \Delta$.
  In the former case, the application we are concerned with is the following.
  \pagecenter{%
    \AxiomC{$\GentzenS \proves^{h-1} \Gamma^\Vis, \Phi_1, \Box\Phi_2 \seq{\lev} \Psi, p, (\Delta \setminus \{p\})^\Vis$}
    \AxiomC{$\GentzenS \proves^{h-1} \Gamma^\Vis, \Phi_1, \Box\Phi_2 \seq{\lev} \Psi, \Box p, (\Delta \setminus \{p\})^\Vis$}
    \RightLabel{$\RAndR{\lev}{\lev}$}
    \BinaryInfC{$\GentzenS \proves^{h} \Gamma^\Vis, \Phi_1, \Box\Phi_2 \seq{\lev} \Psi, p \land \Box p, (\Delta \setminus \{p\})^\Vis$}
    \DisplayProof
  }
  The induction hypothesis applied to the left premise yields
  $\GentzenSPL \proves \Gamma, \Phi_1, \Phi_2 \seq{\lev} \Psi, p, \Delta \setminus \{p\}$,
  whose succedent coincides with $\Psi, \Delta$ as a set, since $p \in \Delta$.
  Note that the right premise, whose succedent contains the boxed variable $\Box p$, is simply not used.

  In the latter case, the application we are concerned with is the following.
  \pagecenter{%
    \AxiomC{$\GentzenS \proves^{h-1} \Gamma^\Vis, \Phi_1, \Box\Phi_2 \seq{\lev} \Psi, A^\Vis, (\Delta \setminus \{A \land B\})^\Vis$}
    \AxiomC{$\GentzenS \proves^{h-1} \Gamma^\Vis, \Phi_1, \Box\Phi_2 \seq{\lev} \Psi, B^\Vis, (\Delta \setminus \{A \land B\})^\Vis$}
    \RightLabel{$\RAndR{\lev}{\lev}$}
    \BinaryInfC{$\GentzenS \proves^{h} \Gamma^\Vis, \Phi_1, \Box\Phi_2 \seq{\lev} \Psi, A^\Vis \land B^\Vis, (\Delta \setminus \{A \land B\})^\Vis$}
    \DisplayProof
  }
  The induction hypothesis applies to both premises, and $\RAndR{\lev}{\lev}$ of $\GentzenSPL$ yields the desired sequent.

  \medskip\noindent
  \textbf{Case} $\ROrL{\lev}{\lev}$, $\ROrR{\lev}{\lev}$:
  The principal formula can only be $(A \lor B)^\Vis = A^\Vis \lor B^\Vis$ for some $A \lor B \in \Gamma$ (resp.\ $A \lor B \in \Delta$).
  The induction hypothesis applies to the premises, and the corresponding rule of $\GentzenSPL$ yields the desired sequent.

  \medskip\noindent
  \textbf{Case} $\RBoxL{2}{2}$:
  The principal formula is either $\Box p$ for some $p \in \Phi_2$, or $(E \limp F)^\Vis = \Box(E^\Vis \limp F^\Vis)$ for some $E \limp F \in \Gamma$.

  In the former case, the application we are concerned with is the following.
  \pagecenter{%
    \AxiomC{$\GentzenS \proves^{h-1} p, \Gamma^\Vis, \Phi_1, \Box(\Phi_2 \setminus \{p\}) \seq{2} \Psi, \Delta^\Vis$}
    \RightLabel{$\RBoxL{2}{2}$}
    \UnaryInfC{$\GentzenS \proves^{h} \Box p, \Gamma^\Vis, \Phi_1, \Box(\Phi_2 \setminus \{p\}) \seq{2} \Psi, \Delta^\Vis$}
    \DisplayProof
  }
  The induction hypothesis applied to the premise, with the components $\Phi_1 \cup \{p\}$ and $\Phi_2 \setminus \{p\}$, yields
  $\GentzenSPL \proves \Gamma, \Phi_1, p, \Phi_2 \setminus \{p\} \seq{2} \Psi, \Delta$,
  whose antecedent coincides with $\Gamma, \Phi_1, \Phi_2$ as a set, since $p \in \Phi_2$.

  In the latter case, the application we are concerned with is the following.
  \pagecenter{%
    \AxiomC{$\GentzenS \proves^{h-1} E^\Vis \limp F^\Vis, (\Gamma \setminus \{E \limp F\})^\Vis, \Phi_1, \Box\Phi_2 \seq{2} \Psi, \Delta^\Vis$}
    \RightLabel{$\RBoxL{2}{2}$}
    \UnaryInfC{$\GentzenS \proves^{h} \Box(E^\Vis \limp F^\Vis), (\Gamma \setminus \{E \limp F\})^\Vis, \Phi_1, \Box\Phi_2 \seq{2} \Psi, \Delta^\Vis$}
    \DisplayProof
  }
  Applying the height-preserving inversion $\Inv{\RImpL{2}{2}}$ (Lemma \ref{lem:GentzenS_inversion_ImpL}) to the premise, we obtain the following.
  \[
    \begin{aligned}
      \GentzenS & \proves^{h-1} (\Gamma \setminus \{E \limp F\})^\Vis, \Phi_1, \Box\Phi_2 \seq{2} \Psi, \Delta^\Vis, E^\Vis  \\
      \GentzenS & \proves^{h-1} F^\Vis, (\Gamma \setminus \{E \limp F\})^\Vis, \Phi_1, \Box\Phi_2 \seq{2} \Psi, \Delta^\Vis.
    \end{aligned}
  \]
  Since the height is $h - 1$, the induction hypothesis applies and yields the following.
  \[
    \begin{aligned}
      \GentzenSPL & \proves (\Gamma \setminus \{E \limp F\}), \Phi_1, \Phi_2 \seq{2} \Psi, \Delta, E  \\
      \GentzenSPL & \proves F, (\Gamma \setminus \{E \limp F\}), \Phi_1, \Phi_2 \seq{2} \Psi, \Delta.
    \end{aligned}
  \]
  Using $\RImpL{2}{2}$ we obtain $\GentzenSPL \proves E \limp F, (\Gamma \setminus \{E \limp F\}), \Phi_1, \Phi_2 \seq{2} \Psi, \Delta$, that is, $\GentzenSPL \proves \Gamma, \Phi_1, \Phi_2 \seq{2} \Psi, \Delta$.

  \medskip\noindent
  \textbf{Case} $\RBoxGL{1}{1}$:
  Since the antecedent of the conclusion of an application of $\RBoxGL{1}{1}$ consists of boxed formulas only and its succedent consists of exactly one boxed formula,
  we have $\Phi_1 = \Psi = \emptyset$, $\Gamma = \{C_j \limp D_j : 0 \le j < k\}$ for some $k \geq 0$ and formulas $C_j, D_j$ ($0 \le j < k$), and $\Delta = \{E \limp F\}$ for some formulas $E, F$.
  The application we are concerned with is thus the following.
  \pagecenter{%
    \AxiomC{$\GentzenS \proves^{h-1} \{C^\Vis_j \limp D^\Vis_j : 0 \le j < k\}, \Phi_2, \Box\{C^\Vis_j \limp D^\Vis_j : 0 \le j < k\}, \Box\Phi_2, \Box(E^\Vis \limp F^\Vis) \seq{1} E^\Vis \limp F^\Vis$}
    \RightLabel{$\RBoxGL{1}{1}$}
    \UnaryInfC{$\GentzenS \proves^{h} \Box\{C^\Vis_j \limp D^\Vis_j : 0 \le j < k\}, \Box\Phi_2 \seq{1} \Box(E^\Vis \limp F^\Vis)$}
    \DisplayProof
  }
  Applying the height-preserving inversion $\Inv{\RImpR{1}{1}}$ (Lemma \ref{lem:GentzenS_inversion_ImpR}) to the premise, we obtain the following.
  \pagecenter{\ensuremath{%
      \GentzenS \proves^{h-1} \{C^\Vis_j \limp D^\Vis_j : 0 \le j < k\}, \Phi_2, \Box\{C^\Vis_j \limp D^\Vis_j : 0 \le j < k\}, \Box\Phi_2, \Box(E^\Vis \limp F^\Vis), E^\Vis \seq{1} F^\Vis.
    }}
  Furthermore, the height-preserving inversion $\Inv{\RImpL{1}{1}}$ (Lemma \ref{lem:GentzenS_inversion_ImpL}) applies to the members of $\{C^\Vis_j \limp D^\Vis_j : 0 \le j < k\}$, so, following the definition of $\RImpR{1}{1}$ in $\GentzenSPL$ and determining $\Gamma_i, \Delta_i$ from $C_0, \dots, C_{k-1}$ and $D_0, \dots, D_{k-1}$, for each $i$ with $0 \leq i < 2^k$ we obtain the following.
  \pagecenter{\ensuremath{%
      \GentzenS \proves^{h-1} \Delta^\Vis_i, \Phi_2, \Box\{C^\Vis_j \limp D^\Vis_j : 0 \le j < k\}, \Box\Phi_2, \Box(E^\Vis \limp F^\Vis), E^\Vis \seq{1} F^\Vis, \Gamma^\Vis_i.
    }}
  Each of these sequents is again of the form required in the statement:
  since $\Box\{C^\Vis_j \limp D^\Vis_j : 0 \le j < k\} = \{C_j \limp D_j : 0 \le j < k\}^\Vis$ and $\Box(E^\Vis \limp F^\Vis) = (E \limp F)^\Vis$,
  its antecedent consists of the translations of $\Delta_i, \{C_j \limp D_j : 0 \le j < k\}, E \limp F, E$, the variables $\Phi_2$, and the boxed variables $\Box\Phi_2$.
  Since the height is $h - 1$, the induction hypothesis applies and yields the following.
  \[
    \GentzenSPL \proves \Delta_i, \{C_j \limp D_j : 0 \le j < k\}, E \limp F, E, \Phi_2 \seq{1} F, \Gamma_i.
  \]
  Applying $\RImpR{1}{1}$ of $\GentzenSPL$ with $\Sigma := \{C_j \limp D_j : 0 \le j < k\} \cup \Phi_2$ to these premises, we obtain the following, where we note that antecedents are sets and hence the duplicated occurrences of $C_j \limp D_j$ are contracted.
  \[
    \GentzenSPL \proves \{C_j \limp D_j : 0 \le j < k\}, \Phi_2 \seq{1} E \limp F.
  \]
  This is the desired sequent $\Gamma, \Phi_1, \Phi_2 \seq{1} \Psi, \Delta$.
\end{proof}

\begin{lem}[{\cite[Lemma 5]{Pet23}}] \label{lem:embedding_S_to_SPL}
  If $\GentzenS \proves \Gamma^\Vis \seq{\lev} \Delta^\Vis$, then $\GentzenSPL \proves \Gamma \seq{\lev} \Delta$.
\end{lem}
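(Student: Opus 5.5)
This is an immediate special case of Lemma \ref{lem:embedding_S_to_SPL_general}. I will take $\Phi_1 = \Phi_2 = \Psi = \emptyset$ in that lemma. The hypothesis $\GentzenS \proves \Gamma^\Vis \seq{\lev} \Delta^\Vis$ is then literally the sequent $\Gamma^\Vis, \Phi_1, \Box\Phi_2 \seq{\lev} \Psi, \Delta^\Vis$, since $\Box\emptyset = \emptyset$ and the sequents are built from sets. The conclusion of Lemma \ref{lem:embedding_S_to_SPL_general} then reads $\GentzenSPL \proves \Gamma, \emptyset, \emptyset \seq{\lev} \emptyset, \Delta$, which is exactly $\GentzenSPL \proves \Gamma \seq{\lev} \Delta$.

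Two side conditions need checking. First, the empty set is trivially a set of propositional variables. Second, $\Gamma$ and $\Delta$ are sets of propositional formulas, as the statement requires. Nothing else is involved: the height of the proof-tree, the level $\lev$, and the internal case analysis are all handled inside Lemma \ref{lem:embedding_S_to_SPL_general}.

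There is no real obstacle here. All the substantive work was done in the strengthened lemma. That lemma needed the auxiliary components $\Phi_1, \Phi_2, \Psi$ because decomposing $p^\Vis = p \land \Box p$ by $\RAndL{\lev}{\lev}$, or $\Box p$ by $\RBoxL{2}{2}$, produces bare and boxed variables that are not themselves translations. The only point needing care in the specialization is that the sequent identities hold as sets, and they do.

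Combined with Lemma \ref{lem:embedding_SPL_to_S}, this gives the full equivalence $\GentzenSPL \proves \Gamma \seq{\lev} \Delta$ iff $\GentzenS \proves \Gamma^\Vis \seq{\lev} \Delta^\Vis$. Together with Proposition \ref{prop:GentzenS_provability}, it follows that $\LogicSPL \proves A$ iff $\LogicS \proves A^\Vis$.
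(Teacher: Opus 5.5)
Your proposal is correct and is exactly the paper's proof: it specializes Lemma \ref{lem:embedding_S_to_SPL_general} with $\Phi_1 = \Phi_2 = \Psi = \emptyset$. Your closing remarks also match the paper, since combining the result with Lemma \ref{lem:embedding_SPL_to_S} and Proposition \ref{prop:GentzenS_provability} gives Theorem \ref{thm:embedding_GentzenSPL_GentzenS} and Corollary \ref{cor:embedding_GentzenSPL_S}.
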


\begin{proof}
  Take $\Phi_1 = \Phi_2 = \Psi = \emptyset$ in Lemma \ref{lem:embedding_S_to_SPL_general}.
\end{proof}

\begin{thm}[{\cite[Theorem 1]{Pet23}}] \label{thm:embedding_GentzenSPL_GentzenS}
  $\GentzenSPL \proves \Gamma \seq{\lev} \Delta$ iff $\GentzenS \proves \Gamma^\Vis \seq{\lev} \Delta^\Vis$.
\end{thm}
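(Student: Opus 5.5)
The theorem combines the two directions already established, so the plan is simply to cite them in order.

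For the left-to-right direction, suppose $\GentzenSPL \proves \Gamma \seq{\lev} \Delta$. Lemma \ref{lem:embedding_SPL_to_S} then gives $\GentzenS \proves \Gamma^\Vis \seq{\lev} \Delta^\Vis$ directly. That lemma was proved by induction on the height of the $\GentzenSPL$ proof-tree. The only nontrivial cases were $\RImpL{2}{2}$ and $\RImpR{1}{1}$:
\begin{itemize}
\item $\RImpL{2}{2}$ was handled by $\RImpL{2}{2}$ of $\GentzenS$ followed by $\RBoxL{2}{2}$.
\item $\RImpR{1}{1}$ was handled by combining the $2^k$ premises through $2^k-1$ applications of $\RImpL{1}{1}$, then applying $\RImpR{1}{1}$, and finally Lemma \ref{lem:embedding_SPL_to_S_aux2}. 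That lemma relies on Lemma \ref{lem:embedding_SPL_to_S_aux1} and on cut-admissibility (Proposition \ref{prop:GentzenS_cut_elimination}).
\end{itemize}

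For the right-to-left direction, suppose $\GentzenS \proves \Gamma^\Vis \seq{\lev} \Delta^\Vis$. Lemma \ref{lem:embedding_S_to_SPL}, which is the instance $\Phi_1=\Phi_2=\Psi=\emptyset$ of Lemma \ref{lem:embedding_S_to_SPL_general}, yields $\GentzenSPL \proves \Gamma \seq{\lev} \Delta$.

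No new obstacle arises at this stage; the substantive work lives in the two cited lemmas. The delicate point there was the generalization of Lemma \ref{lem:embedding_S_to_SPL_general} to antecedents containing bare variables $\Phi_1$ and boxed variables $\Box\Phi_2$. This generalization is needed so that the induction hypothesis applies after decomposing $p\land\Box p$, or after $\RBoxL{2}{2}$ strips a box from $\Box p$. The other delicate point was the use of height-preserving inversions (Lemmas \ref{lem:GentzenS_inversion_ImpL} and \ref{lem:GentzenS_inversion_ImpR}) in the $\RBoxGL{1}{1}$ and $\RBoxL{2}{2}$ cases, which lets the induction on height go through. Since both lemmas are established, the theorem follows by conjoining them.
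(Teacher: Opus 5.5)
Your proposal is correct and matches the paper's proof exactly. The theorem is obtained by combining Lemma \ref{lem:embedding_SPL_to_S} for the forward direction with Lemma \ref{lem:embedding_S_to_SPL}, the instance $\Phi_1=\Phi_2=\Psi=\emptyset$ of Lemma \ref{lem:embedding_S_to_SPL_general}, for the converse; your account of where the real work lies in those lemmas is also accurate.
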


\begin{proof}
  This follows by combining Lemma \ref{lem:embedding_SPL_to_S} and Lemma \ref{lem:embedding_S_to_SPL}.
\end{proof}

Combining Theorem \ref{thm:embedding_GentzenSPL_GentzenS} with the properties of $\GentzenS$ (Propositions \ref{prop:GentzenS_provability} and \ref{prop:GentzenS_cut_elimination}), we immediately obtain the following corollaries.

\begin{cor} \label{cor:embedding_GentzenSPL_S}
  $\LogicSPL \proves A$ iff $\LogicS \proves A^\Vis$ for every $A \in \FmlPL$.
\end{cor}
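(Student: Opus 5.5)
The plan is to reduce the statement to Theorem \ref{thm:embedding_GentzenSPL_GentzenS} with $\lev = 2$, $\Gamma = \emptyset$ and $\Delta = \{A\}$, and then use Proposition \ref{prop:GentzenS_provability} to pass from derivability in $\GentzenS$ to provability in $\LogicS$.

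By definition, $\LogicSPL \proves A$ means $\GentzenSPL \proves {} \seqII A$. By Theorem \ref{thm:embedding_GentzenSPL_GentzenS}, this holds if and only if $\GentzenS \proves \emptyset^\Vis \seqII \{A\}^\Vis$, that is, $\GentzenS \proves {} \seqII A^\Vis$. Note that $\emptyset^\Vis = \emptyset$ and $\{A\}^\Vis = \{A^\Vis\}$, so the translated sequent has exactly this shape.

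Next, by the second item of Proposition \ref{prop:GentzenS_provability}, $\GentzenS \proves {} \seqII A^\Vis$ holds if and only if $\LogicS \proves \lconj\emptyset \limp \ldisj\{A^\Vis\}$. By the conventions fixed in the preliminaries, this formula is $\top \limp A^\Vis$. It remains to identify $\top \limp A^\Vis$ with $A^\Vis$ in $\LogicS$. This is immediate: $\LogicS$ contains all classical tautologies (it extends $\LogicGL$) and is closed under modus ponens, so it proves $\top \limp A^\Vis$ exactly when it proves $A^\Vis$. Chaining the equivalences gives $\LogicSPL \proves A$ iff $\LogicS \proves A^\Vis$.

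There is no genuine obstacle here; all the work is carried by Theorem \ref{thm:embedding_GentzenSPL_GentzenS} and Proposition \ref{prop:GentzenS_provability}. The only points needing care are bookkeeping: the definition of $\LogicSPL$ uses level $2$, so it is the second item of Proposition \ref{prop:GentzenS_provability} that applies, and the empty antecedent is read as $\top$.
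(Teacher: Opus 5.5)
Your proposal is correct and follows the paper's route: the paper obtains this corollary immediately by combining Theorem~\ref{thm:embedding_GentzenSPL_GentzenS} (at level $2$ with empty antecedent) with the second item of Proposition~\ref{prop:GentzenS_provability}. The identification of $\top \limp A^\Vis$ with $A^\Vis$ via the closure of $\LogicS$ under modus ponens is the only detail the paper leaves implicit, and you handle it correctly.
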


\begin{cor}[Cut-admissibility of $\GentzenSPL$] \label{cor:GentzenSPL_cut_elimination}
  In $\GentzenSPL$, the cut rule $\RCut{\lev}{\lev}$ is admissible.
\end{cor}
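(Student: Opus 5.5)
The plan is to transfer cut-admissibility from $\GentzenS$ to $\GentzenSPL$ along the embedding of Theorem \ref{thm:embedding_GentzenSPL_GentzenS}, with no direct syntactic cut-elimination argument in $\GentzenSPL$. Suppose we are given the premises of an application of $\RCut{\lev}{\lev}$ in $\GentzenSPL$, namely $\GentzenSPL \proves \Gamma_1 \seq{\lev} \Delta_1, A$ and $\GentzenSPL \proves A, \Gamma_2 \seq{\lev} \Delta_2$. It suffices to show $\GentzenSPL \proves \Gamma_1, \Gamma_2 \seq{\lev} \Delta_1, \Delta_2$. An induction on the number of cuts in a $\GentzenSPL + \RCut{\lev}{\lev}$ proof, eliminating a topmost cut at each step, then yields the general statement.

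First, apply Lemma \ref{lem:embedding_SPL_to_S} to both premises. This gives $\GentzenS \proves \Gamma_1^\Vis \seq{\lev} \Delta_1^\Vis, A^\Vis$ and $\GentzenS \proves A^\Vis, \Gamma_2^\Vis \seq{\lev} \Delta_2^\Vis$.

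Second, combine these with $\RCut{\lev}{\lev}$ in $\GentzenS$, which is admissible by Proposition \ref{prop:GentzenS_cut_elimination}. This gives $\GentzenS \proves \Gamma_1^\Vis, \Gamma_2^\Vis \seq{\lev} \Delta_1^\Vis, \Delta_2^\Vis$. Since $(\cdot)^\Vis$ is applied elementwise and commutes with union of sets, this sequent is exactly $(\Gamma_1 \cup \Gamma_2)^\Vis \seq{\lev} (\Delta_1 \cup \Delta_2)^\Vis$.

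Third, apply Lemma \ref{lem:embedding_S_to_SPL} to obtain $\GentzenSPL \proves \Gamma_1, \Gamma_2 \seq{\lev} \Delta_1, \Delta_2$. Note that this lemma is stated for arbitrary sets, so it applies to $\Gamma_1 \cup \Gamma_2$ and $\Delta_1 \cup \Delta_2$.

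There is no real obstacle here, since all the work lies in the two directions of the embedding. The one point to check is that the two embedding lemmas are used at the same level $\lev$ as the cut; both are stated uniformly in $\lev$. A second point is that the translated sequent must literally have the form $\Gamma^\Vis \seq{\lev} \Delta^\Vis$ required by Lemma \ref{lem:embedding_S_to_SPL}, which holds by the set identity noted in the second step. A minor subtlety is that two distinct formulas could collapse under $(\cdot)^\Vis$ in a set. This does not happen, because $(\cdot)^\Vis$ is injective: it is defined by structural recursion, and the outermost shapes $p \land \Box p$, $\bot$, $\land$, $\lor$, $\Box$ are pairwise distinguishable.
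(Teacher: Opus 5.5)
Your proposal is correct and is essentially the paper's own argument: the paper obtains this corollary immediately from the embedding theorem together with cut-admissibility of $\GentzenS$, which amounts to translating the premises with Lemma~\ref{lem:embedding_SPL_to_S}, cutting in $\GentzenS$ by Proposition~\ref{prop:GentzenS_cut_elimination}, and pulling the conclusion back with Lemma~\ref{lem:embedding_S_to_SPL}. Your injectivity remark is harmless but unnecessary, since $(\Gamma_1 \cup \Gamma_2)^\Vis = \Gamma_1^\Vis \cup \Gamma_2^\Vis$ holds for any elementwise map, and Lemma~\ref{lem:embedding_S_to_SPL} is stated for arbitrary sets.
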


\begin{rem} \label{rem:syntacticality_embedding}
  It should be noted that this embedding proof, and consequently Corollary \ref{cor:GentzenSPL_cut_elimination},
  cannot be regarded as purely syntactic in the strict sense.
  In Lemmas \ref{lem:embedding_SPL_to_S_aux1} and \ref{lem:embedding_SPL_to_S_aux2} we use the cut,
  relying on the fact that cut is admissible in $\GentzenS$.
  The sequent calculus for $\LogicS$ proposed by \cite{Kus20} comes with a purely syntactic cut-elimination algorithm;
  however, the cut-admissibility of $\GentzenS$ from \cite{KK23} that we adopt here
  (more precisely, the system modified by adding $\land$ and $\lor$ as primitive rules)
  is established by a semantic argument.
  Consequently, for instance, we cannot construct a cut-free proof-tree of $\GentzenSPL$ explicitly from this proof.
\end{rem}

\begin{rem}[About $\LogicFPL$ and $\LogicGL$]
  As also suggested by Petrukhin's paper,
  if one is given a suitable sequent calculus for $\LogicFPL$ (that is, roughly speaking, the fragment of $\GentzenSPL$ restricted to sequents $\seqI$)
  and a sequent calculus for $\LogicGL$ (likewise, the fragment of $\GentzenS$ restricted to sequents $\seqI$, which coincides with \cite{SV82}),
  then, by reconstructing the argument above while forgetting about $\seqII$,
  one obtains a syntactic construction of the embedding of $\LogicFPL$ into $\LogicGL$ established in \cite{Vis81}.
\end{rem}

\subsection{Errors in Petrukhin's proof} \label{sect:errors_in_Petrukhin}

This embedding result is based on Petrukhin \cite{Pet23}, but the proof in the original paper appears to contain several errors.
We point them out below.
The issue concerns \cite[Lemma 5]{Pet23}, which corresponds to Lemma \ref{lem:embedding_S_to_SPL} in the present paper.

\begin{prop*}[{\cite[Lemma 5]{Pet23}}]
  If $\GentzenS \proves \Gamma^\Vis, \Phi, \Box\Phi \seq{\lev} \Psi, \Delta^\Vis$, then $\GentzenSPL \proves \Gamma, \Phi \seq{\lev} \Psi, \Delta$.
  Here $\Gamma, \Delta$ are multisets of formulas and $\Phi, \Psi$ are multisets of propositional variables.
\end{prop*}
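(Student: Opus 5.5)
The most economical route is not to redo Petrukhin's induction as stated, but to obtain this statement as a special case of Lemma~\ref{lem:embedding_S_to_SPL_general}. That lemma is formulated for sets, so I first read the multisets $\Gamma,\Delta,\Phi,\Psi$ as their underlying sets. This is harmless in our calculi, because contraction is built into the set notation for sequents in both $\GentzenS$ and $\GentzenSPL$.

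With that reading, I set $\Phi_1 := \Phi$ and $\Phi_2 := \Phi$ in Lemma~\ref{lem:embedding_S_to_SPL_general}. The hypothesis $\GentzenS \proves \Gamma^\Vis, \Phi, \Box\Phi \seq{\lev} \Psi, \Delta^\Vis$ is then literally of the form $\Gamma^\Vis, \Phi_1, \Box\Phi_2 \seq{\lev} \Psi, \Delta^\Vis$. The lemma yields $\GentzenSPL \proves \Gamma, \Phi, \Phi \seq{\lev} \Psi, \Delta$, and as a set this antecedent is just $\Gamma, \Phi$. This gives the claim. If one insists on genuine multisets, I would additionally prove height-preserving admissibility of contraction in both calculi by the usual induction on height. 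This uses the inversion lemmas (Lemmas~\ref{lem:GentzenS_inversion_ImpL} and~\ref{lem:GentzenS_inversion_ImpR}) and the analogous invertibility of the $\land$/$\lor$ rules. With contraction available, the multiset version is transported to the set version and back.

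The main obstacle, and the reason for going through the generalised lemma, is that a direct induction on the height of the $\GentzenS$-derivation with the invariant ``$\Phi$ and $\Box\Phi$ range over the same variables'' is not closed under the rules. There are two concrete failures.

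\begin{itemize}
\item In the case $\RBoxL{2}{2}$ with principal formula $\Box p$ for $p\in\Phi$, the premise has $p$ but no longer $\Box p$. It therefore fails to have the shape $\Phi',\Box\Phi'$.
\item In $\RWL{\lev}{\lev}$, a lone $p$ or a lone $\Box p$ may be introduced, which breaks the pairing in the same way.
\end{itemize}

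In the $\RBoxGL{1}{1}$ case, the premise contains $\Phi,\Box\Phi$ only because both are forced by the rule. The induction hypothesis therefore has to be applicable to sequents in which the plain and boxed variables are decoupled. That is exactly what separating $\Phi_1$ from $\Phi_2$ achieves. So the proof I propose consists of the one-line specialisation, with the case analysis of Lemma~\ref{lem:embedding_S_to_SPL_general} doing the real work, in particular its $\RBoxL{2}{2}$ and $\RBoxGL{1}{1}$ cases.
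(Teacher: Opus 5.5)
Your proposal is the paper's own argument: the paper recovers this statement, in its set-based calculi, by taking $\Phi_1 = \Phi_2 = \Phi$ in Lemma \ref{lem:embedding_S_to_SPL_general}. Your diagnosis of why the coupled $\Phi,\Box\Phi$ invariant breaks also matches the paper's account of Petrukhin's error: the $\RBoxL{2}{2}$ case with principal formula $\Box p$, and lone weakened variables. You should drop the multiset aside, though. The paper explicitly notes that height-preserving contraction and weakening are not available in $\GentzenS$, and your sketch already fails at height-preserving inversion of $\RAndL{\lev}{\lev}$: $r \land q, p \seq{1} p$ is one weakening from $p \seq{1} p$, while $r, q, p \seq{1} p$ needs two. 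With sets as the official reading, no such detour is needed.
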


The proof proceeds by induction on the height of the proof-tree of $\GentzenS \proves \Gamma^\Vis, \Phi, \Box\Phi \seq{\lev} \Psi, \Delta^\Vis$.
Petrukhin claims that the proof proceeds as in \cite[Lemma 7]{YS17}, but the problem occurs in the case of $\RBoxL{2}{2}$.
Petrukhin verifies only the case where the principal formula is $\Box(E^\Vis \limp F^\Vis)$.
To begin with, this verification itself is erroneous.
The proof-tree given there is the following (cf.\ \cite[p.~17]{Pet23}).
\pagecenter{%
  \AxiomC{$\GentzenS \proves^{h-1} E^\Vis \limp F^\Vis, \Gamma^\Vis, \Phi \seq{2} \Psi, \Delta^\Vis$}
  \RightLabel{$\RBoxL{2}{2}$}
  \UnaryInfC{$\GentzenS \proves^{h} \Box(E^\Vis \limp F^\Vis), \Gamma^\Vis, \Phi \seq{2} \Psi, \Delta^\Vis$}
  \DisplayProof
}
There are two problems.
First, $\Box\Phi$ is missing from both the conclusion and the premise.
Second, even if we supply it and regard the premise as $\GentzenS \proves^{h-1} E^\Vis \limp F^\Vis, \Gamma^\Vis, \Phi, \Box\Phi \seq{2} \Psi, \Delta^\Vis$, the induction hypothesis cannot be applied to this premise.
This is because $E^\Vis \limp F^\Vis$ never belongs to $\Sigma^\Vis$ for any $\Sigma$, and is of course not a propositional variable, so the premise is not of the form $\Gamma^\Vis, \Phi, \Box\Phi \seq{\lev} \Psi, \Delta^\Vis$.
Nevertheless, Petrukhin appears to apply the induction hypothesis to this premise and obtain $E \limp F, \Gamma, \Phi \seq{2} \Psi, \Delta$, which is not justified.

Furthermore, one should in fact also consider the case where, for some $p \in \Phi$, the principal formula is $\Box p$.
In this case the proof-tree is as follows.
\pagecenter{%
  \AxiomC{$\GentzenS \proves^{h-1} p, \Gamma^\Vis, p, (\Phi \setminus \{p\}), \Box(\Phi \setminus \{p\}) \seq{2} \Psi, \Delta^\Vis$}
  \RightLabel{$\RBoxL{2}{2}$}
  \UnaryInfC{$\GentzenS \proves^{h} \Box p, \Gamma^\Vis, p, \Phi \setminus \{p\}, \Box(\Phi \setminus \{p\}) \seq{2} \Psi, \Delta^\Vis$}
  \DisplayProof
}
Now, in order to apply the induction hypothesis to this premise and carry the argument through, one would need to obtain, by \emph{height-preserving} weakening and contraction on the premise, $\GentzenS \proves^{h-1} p, \Box p, \Gamma^\Vis, (\Phi \setminus \{p\}), \Box(\Phi \setminus \{p\}) \seq{2} \Psi, \Delta^\Vis$.
However, such height-preserving weakening and contraction cannot be carried out in $\GentzenS$\footnote{The calculus adopted by \cite{Pet23} is the sequent calculus of \cite{Kus20}, but this does not affect the details of the argument.} (cf.\ \cite{NvP11}).
The reason this problem did not arise in the proof of \cite{YS17} is that the sequent system for $\LogicKFour$ adopted there is of G3-type (cf.\ \cite{TS00}).
In that system, the weakening, contraction, and inversion rules are height-preservingly admissible\footnote{However, \cite{YS17} does not make explicit how these properties are established for their extension to $\LogicKFour$.}.

In this paper, these problems are avoided in Lemma \ref{lem:embedding_S_to_SPL_general} as follows.
Since our sequents are based on \emph{sets}, contraction is built into the formulation and never needs to be applied explicitly, let alone height-preservingly.
Moreover, the problematic case where the principal formula of $\RBoxL{2}{2}$ is $\Box p$ is absorbed by allowing the two sets of variables $\Phi_1$ and $\Phi_2$ in the statement to differ:
unboxing $\Box p$ merely moves $p$ from $\Phi_2$ to $\Phi_1$, so the induction hypothesis applies directly, without any use of weakening.

\section{Sequent calculus $\GentzenD$ for modal logic $\LogicD$} \label{sect:GentzenD}

Kashima et al.\ \cite{KKIM25} introduced two sequent calculi for $\LogicD$: one using only two-level sequents and another using three-level sequents
\footnote{Cut-elimination does not hold for their two-level calculus, so it is not suitable for our purpose (cf.\ \cite[Theorem 4.9]{KKIM25}).}.
Here we present the system $\GentzenD$, obtained by modifying their three-level calculus in the same way as in Section \ref{sect:GentzenS}, together with its basic proof-theoretic properties.

\begin{defn}
  Let $\Gamma, \Delta$ be sets of formulas and let $\lev = 1, 2, 3$.
  An expression of the form $\Gamma \seq{\lev} \Delta$ is called a \emph{sequent}.
  The sequent calculus $\GentzenD$ for $\LogicD$, in which sequents of level $\lev = 1, 2, 3$ occur, is defined by the following rules.

  \begin{center}
    \begin{minipage}{0.46\textwidth}\centering
      \begin{prooftree}
        \AxiomC{}
        \RightLabel{$\RAx{\lev}$}
        \UnaryInfC{$p \seq{\lev} p$}
      \end{prooftree}
      where $p \in \PropVar$.
    \end{minipage}\hfill
    \begin{minipage}{0.46\textwidth}\centering
      \begin{prooftree}
        \AxiomC{}
        \RightLabel{$\RBotL{\lev}$}
        \UnaryInfC{$\bot \seq{\lev} {}$}
      \end{prooftree}
    \end{minipage}

    \medskip

    \begin{minipage}{0.46\textwidth}\centering
      \begin{prooftree}
        \AxiomC{$\Gamma \seq{\lev} \Delta$}
        \RightLabel{$\RWL{\lev}{\lev}$}
        \UnaryInfC{$A, \Gamma \seq{\lev} \Delta$}
      \end{prooftree}
    \end{minipage}\hfill
    \begin{minipage}{0.46\textwidth}\centering
      \begin{prooftree}
        \AxiomC{$\Gamma \seq{\lev} \Delta$}
        \RightLabel{$\RWR{\lev}{\lev}$}
        \UnaryInfC{$\Gamma \seq{\lev} \Delta, A$}
      \end{prooftree}
    \end{minipage}

    \medskip

    \begin{minipage}{0.46\textwidth}\centering
      \begin{prooftree}
        \AxiomC{$A, B, \Gamma \seq{\lev} \Delta$}
        \RightLabel{$\RAndL{\lev}{\lev}$}
        \UnaryInfC{$A \land B, \Gamma \seq{\lev} \Delta$}
      \end{prooftree}
    \end{minipage}\hfill
    \begin{minipage}{0.46\textwidth}\centering
      \begin{prooftree}
        \AxiomC{$\Gamma \seq{\lev} \Delta, A$}
        \AxiomC{$\Gamma \seq{\lev} \Delta, B$}
        \RightLabel{$\RAndR{\lev}{\lev}$}
        \BinaryInfC{$\Gamma \seq{\lev} \Delta, A \land B$}
      \end{prooftree}
    \end{minipage}

    \medskip

    \begin{minipage}{0.46\textwidth}\centering
      \begin{prooftree}
        \AxiomC{$A, \Gamma \seq{\lev} \Delta$}
        \AxiomC{$B, \Gamma \seq{\lev} \Delta$}
        \RightLabel{$\ROrL{\lev}{\lev}$}
        \BinaryInfC{$A \lor B, \Gamma \seq{\lev} \Delta$}
      \end{prooftree}
    \end{minipage}\hfill
    \begin{minipage}{0.46\textwidth}\centering
      \begin{prooftree}
        \AxiomC{$\Gamma \seq{\lev} \Delta, A, B$}
        \RightLabel{$\ROrR{\lev}{\lev}$}
        \UnaryInfC{$\Gamma \seq{\lev} \Delta, A \lor B$}
      \end{prooftree}
    \end{minipage}

    \medskip

    \begin{minipage}{0.46\textwidth}\centering
      \begin{prooftree}
        \AxiomC{$\Gamma \seq{\lev} \Delta, A$}
        \AxiomC{$B, \Gamma \seq{\lev} \Delta$}
        \RightLabel{$\RImpL{\lev}{\lev}$}
        \BinaryInfC{$A \limp B, \Gamma \seq{\lev} \Delta$}
      \end{prooftree}
    \end{minipage}\hfill
    \begin{minipage}{0.46\textwidth}\centering
      \begin{prooftree}
        \AxiomC{$A, \Gamma \seq{\lev} \Delta, B$}
        \RightLabel{$\RImpR{\lev}{\lev}$}
        \UnaryInfC{$\Gamma \seq{\lev} \Delta, A \limp B$}
      \end{prooftree}
    \end{minipage}

    \medskip

    \begin{minipage}{0.46\textwidth}\centering
      \begin{prooftree}
        \AxiomC{$\Gamma \seq{1} \Delta$}
        \RightLabel{$\RLift{1}{2}$}
        \UnaryInfC{$\Gamma \seq{2} \Delta$}
      \end{prooftree}
    \end{minipage}\hfill
    \begin{minipage}{0.46\textwidth}\centering
      \begin{prooftree}
        \AxiomC{$\Box\Gamma \seq{2} \Box\Delta$}
        \RightLabel{$\RLift{2}{3}$}
        \UnaryInfC{$\Box\Gamma \seq{3} \Box\Delta$}
      \end{prooftree}
    \end{minipage}

    \medskip

    \begin{minipage}{0.46\textwidth}\centering
      \begin{prooftree}
        \AxiomC{$\Box\Gamma, \Gamma, \Box A \seq{1} A$}
        \RightLabel{$\RBoxGL{1}{1}$}
        \UnaryInfC{$\Box\Gamma \seq{1} \Box A$}
      \end{prooftree}
    \end{minipage}\hfill
    \begin{minipage}{0.46\textwidth}\centering
      \begin{prooftree}
        \AxiomC{$A, \Gamma \seq{2} \Delta$}
        \RightLabel{$\RBoxL{2}{2}$}
        \UnaryInfC{$\Box A, \Gamma \seq{2} \Delta$}
      \end{prooftree}
    \end{minipage}
  \end{center}
\end{defn}

The provability of $\seq{1}$ and $\seq{2}$ coincides with that of $\GentzenS$, and the level-$3$ sequents $\seqIII$ capture provability in $\LogicD$.

\begin{rem}
  Our calculus differs from the three-level system of \cite{KKIM25} in the same two respects as in Remark \ref{rem:GentzenS_differences},
  and the facts stated below are likewise recovered semantically.
\end{rem}

In particular, the identity sequents for arbitrary formulas are derivable.

\begin{lem} \label{lem:GentzenD_axioms}
  For every $A \in \FmlML$, $\GentzenD \proves A \seq{\lev} A$.
\end{lem}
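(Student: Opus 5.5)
The plan is to mirror the proof of Lemma \ref{lem:GentzenS_original_axioms} and proceed by induction on $A$, proving the claim simultaneously for all three levels $\lev = 1, 2, 3$. The point to check is that every case except the boxed one uses only rules that $\GentzenD$ provides uniformly at every level. In particular, unlike $\GentzenSPL$, $\GentzenD$ has $\RImpL{\lev}{\lev}$ and $\RImpR{\lev}{\lev}$ at all levels, including level $3$.

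The propositional cases go as follows. If $A \equiv p$, I use $\RAx{\lev}$. If $A \equiv \bot$, I use $\RBotL{\lev}$ followed by $\RWR{\lev}{\lev}$. For $A \equiv B \land C$ and $A \equiv B \lor C$, I apply the induction hypothesis at the same level $\lev$, weaken, and then use the $\land$ and $\lor$ rules of level $\lev$. For $A \equiv B \limp C$, I reuse the proof-tree from Lemma \ref{lem:GentzenS_original_axioms} verbatim at level $\lev$: weakening applied to $B \seq{\lev} B$ and to $C \seq{\lev} C$, then $\RImpL{\lev}{\lev}$, then $\RImpR{\lev}{\lev}$. All of these rules exist at each $\lev \in \{1,2,3\}$.

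The only case that needs care is $A \equiv \Box B$, because $\GentzenD$ has no right rule for $\Box$ at levels $2$ and $3$. At level $1$, I take the induction hypothesis $B \seq{1} B$, weaken it to $B, \Box B \seq{1} B$, and apply $\RBoxGL{1}{1}$ with $\Gamma = \{B\}$ to get $\Box B \seq{1} \Box B$. Level $2$ then follows by $\RLift{1}{2}$. For level $3$, the sequent $\Box B \seq{2} \Box B$ has the shape $\Box\Gamma \seq{2} \Box\Delta$ with $\Gamma = \Delta = \{B\}$, so $\RLift{2}{3}$ applies and gives $\Box B \seq{3} \Box B$. This is the step I expect to be the only real obstacle, since it is where the side condition of $\RLift{2}{3}$ (both sides boxed) has to be met. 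It is met trivially here, because the level-$2$ sequent is derived directly from level $1$ with boxed formulas on both sides.

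Since every case is handled uniformly in $\lev$, with the box case climbing through the levels via the two lift-up rules, the induction closes and yields $\GentzenD \proves A \seq{\lev} A$ for all $A \in \FmlML$ and $\lev = 1, 2, 3$.
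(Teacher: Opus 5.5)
Your proposal is correct and follows essentially the same route as the paper, which simply reuses the proof of Lemma \ref{lem:GentzenS_original_axioms} uniformly in $\lev$. You also spell out the one step the paper leaves implicit: for $\lev = 3$ in the $\Box B$ case, $\RLift{2}{3}$ applies to $\Box B \seq{2} \Box B$ because both sides consist of boxed formulas.
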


\begin{proof}
  The argument is the same as for Lemma \ref{lem:GentzenS_original_axioms}.
\end{proof}

\begin{prop}[Provability of $\GentzenD$ {\cite[Proposition 3.6]{KKIM25}}] \label{prop:GentzenD_provability}
  The following equivalences hold.
  \begin{enumerate}
    \item $\GentzenD \proves \Gamma \seq{1} \Delta$ iff $\LogicGL \proves \lconj \Gamma \limp \ldisj \Delta$.
    \item $\GentzenD \proves \Gamma \seq{2} \Delta$ iff $\LogicS \proves \lconj \Gamma \limp \ldisj \Delta$.
    \item $\GentzenD \proves \Gamma \seq{3} \Delta$ iff $\LogicD \proves \lconj \Gamma \limp \ldisj \Delta$.
  \end{enumerate}
\end{prop}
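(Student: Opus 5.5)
The plan is to treat levels $1,2$ by reduction to $\GentzenS$, and level $3$ by a soundness induction plus a semantic completeness argument adapted from \cite{KKIM25}, as announced in the preceding remark.

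\textbf{Levels $1$ and $2$.} No rule of $\GentzenD$ has a premise of level $3$ and a conclusion of level $1$ or $2$. Hence any proof-tree whose end-sequent has level $\lev \le 2$ uses only rules of level $\le 2$. These are exactly the rules of $\GentzenS$, so items 1 and 2 follow directly from Proposition \ref{prop:GentzenS_provability}.

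\textbf{Soundness for level $3$.} I would argue by induction on the height of the proof-tree, reading $\Gamma \seq{3} \Delta$ as $\LogicD \proves \lconj\Gamma \limp \ldisj\Delta$. The propositional rules and weakenings are sound because $\LogicD$ contains classical logic and is closed under modus ponens. The only genuinely modal case is $\RLift{2}{3}$. For it, I need that $\LogicS \proves \lconj\Box\Gamma \limp \ldisj\Box\Delta$ implies $\LogicD \proves \lconj\Box\Gamma \limp \ldisj\Box\Delta$.
\begin{itemize}
\item By Visser's characterization, $\LogicS \proves A$ iff $\LogicGL \proves \lconj\{\Box B \limp B : \Box B \in \mathrm{Sub}(A)\} \limp A$.
\item So the hypothesis gives a $\LogicGL$-derivation from finitely many reflection instances.
\item Semantically, $\LogicD$ is complete for Beklemishev's tail models: finite $\LogicGL$-frames below an infinite descending $\omega$-tail, where boxed formulas eventually stabilise along the tail.
\item A formula whose boxed subformulas are stable along the tail is evaluated at a sufficiently deep tail point. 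That point satisfies all the relevant reflection instances, so a disjunction of boxes that fails at the root would also fail there.
\end{itemize}
This is the semantic route I would take, and it yields soundness of $\RLift{2}{3}$.

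\textbf{Completeness for level $3$.} I would first derive the axioms of $\LogicD$ at level $3$.
\begin{itemize}
\item For $\Box(\Box A \lor \Box B) \limp \Box A \lor \Box B$: first obtain $\Box(\Box A \lor \Box B) \seq{2} \Box A, \Box B$, using $\RBoxL{2}{2}$, the rule $\ROrL{2}{2}$ and Lemma \ref{lem:GentzenD_axioms}. Then apply $\RLift{2}{3}$, followed by $\ROrR{3}{3}$ and $\RImpR{3}{3}$.
\item For $\lnot\Box\bot$: obtain $\Box\bot \seq{2} {}$ by $\RBotL{2}$ and $\RBoxL{2}{2}$, then lift it.
\end{itemize}
Closure of level-$3$ provability under modus ponens needs cut admissibility at level $3$.

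\textbf{Main obstacle.} The hard step is the $\LogicGL$ theorems themselves. A $\LogicGL$-theorem with non-boxed parts cannot be lifted directly, since $\RLift{2}{3}$ only accepts fully boxed sequents. I would therefore prove completeness semantically rather than axiomatically. Suppose $\Gamma \seq{3} \Delta$ is not cut-free derivable. Saturate it level-$3$-wise using the invertible propositional rules. Then build a countermodel as follows.
\begin{itemize}
\item Every boxed subset pair $(\Box\Gamma', \Box\Delta')$ with $\Box\Gamma' \seq{2} \Box\Delta'$ unprovable has, by item 2, an $\LogicS$-countermodel, hence a Visser-style tail model.
\item Since $\RLift{2}{3}$ is the only way boxed information enters level $3$, these countermodels glue into a single $\LogicD$ tail model refuting the root sequent.
\end{itemize}
This simultaneously gives cut admissibility. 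I expect the gluing step, ensuring that the boxed formulas stabilise coherently along one common tail, to be the delicate point, and I would follow the construction of \cite[Proposition 3.6]{KKIM25} there, checking that the primitive $\land,\lor$ rules and atomic axioms do not disturb it, in light of Lemma \ref{lem:GentzenD_axioms}.
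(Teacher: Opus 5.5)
Your reduction of levels $1$ and $2$ to $\GentzenS$ is exactly the paper's observation. For level $3$ the paper gives no argument of its own beyond citing \cite{KKIM25}, so deferring to that construction matches it. The problem is the level-$3$ sketch around that deferral: it misses the one ingredient that separates $\LogicD$ from $\LogicS$.

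The models you describe are a finite $\LogicGL$ part plus an $\omega$-tail on which boxed formulas stabilise, with formulas evaluated at deep tail points. These are Visser's models for $\LogicS$. As your own soundness step uses, every tail point beyond stabilisation satisfies all the relevant reflection instances. So no gluing of such models can refute $\Box p \seq{3} p$. That sequent is underivable in $\GentzenD$: only weakenings, axioms and $\RLift{2}{3}$ can occur above it, and neither $\Box p \seq{2} {}$ nor ${} \seq{2} {}$ is derivable. A $\LogicD$-countermodel needs an evaluation point whose boxed type is the limit type of the tail but whose atomic valuation is free. Concretely, this is a root seeing the entire tail, or in finite terms a root $r_0$ with a successor of the same boxed type on the relevant subformulas.

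With that ingredient, no gluing of several countermodels is needed. An underivable sequent has one underivable atomic leaf $\Phi, \Box\Gamma' \seq{3} \Psi, \Box\Delta'$. This forces $\Phi \cap \Psi = \emptyset$ and $\Box\Gamma' \seq{2} \Box\Delta'$ underivable.
\begin{enumerate}
\item Take a single $\LogicS$-countermodel to $\Box\Gamma' \seq{2} \Box\Delta'$ whose root $r$ satisfies reflection for the relevant boxed subformulas.
\item Add a fresh root $r_0$ seeing $r$ and all of $r$'s successors.
\item Make $p$ true at $r_0$ iff $p \in \Phi$.
\end{enumerate}
Reflection at $r$ makes $r_0$ and $r$ agree on boxed subformulas. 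Hence $r_0$ refutes the leaf, and therefore the original sequent.

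Your soundness argument for $\RLift{2}{3}$ is this construction run backwards. It is fine once the root sees the deep tail point. That same fact is what keeps $\lconj \Box\Gamma$ true at the deep point, a side your argument did not address.

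Finally, the $\LogicGL$-theorems are not the obstacle you take them to be. The root valuation of a $\LogicGL$-countermodel can be changed freely. So every atomic clause of a $\LogicGL$-theorem is either an axiom up to weakening or has a $\LogicGL$-valid boxed part, which lifts by $\RLift{1}{2}$ and $\RLift{2}{3}$.
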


\begin{prop}[Cut-admissibility of $\GentzenD$ {\cite[Theorem 4.8]{KKIM25}}] \label{prop:GentzenD_cut_elimination}
  The cut rule is admissible in $\GentzenD$.
  That is, with the cut rule $\RCut{\lev}{\lev}$ defined as below, if $\GentzenD + \RCut{\lev}{\lev} \proves \Gamma \seq{\lev} \Delta$ then $\GentzenD \proves \Gamma \seq{\lev} \Delta$.
  \begin{prooftree}
    \AxiomC{$\Gamma_1 \seq{\lev} \Delta_1, A$}
    \AxiomC{$A, \Gamma_2 \seq{\lev} \Delta_2$}
    \RightLabel{$\RCut{\lev}{\lev}$}
    \BinaryInfC{$\Gamma_1, \Gamma_2 \seq{\lev} \Delta_1, \Delta_2$}
  \end{prooftree}
\end{prop}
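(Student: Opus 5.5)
The plan is to derive cut-admissibility from Proposition \ref{prop:GentzenD_provability}, read as a statement about the \emph{cut-free} calculus $\GentzenD$. Roughly, $\GentzenD + \RCut{\lev}{\lev}$ is sound for $\LogicGL$, $\LogicS$ and $\LogicD$ at levels $1$, $2$ and $3$, and cut-free $\GentzenD$ is complete for the same logics, so the two calculi prove the same sequents. This mirrors how Remark \ref{rem:GentzenS_differences} says the facts of \cite{KK23} are recovered for $\GentzenS$.

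\emph{Soundness step.} By induction on a derivation in $\GentzenD + \RCut{\lev}{\lev}$, I show that every derivable $\Gamma \seq{\lev} \Delta$ satisfies $\mathbf{L}_\lev \proves \lconj\Gamma \limp \ldisj\Delta$, where $\mathbf{L}_1 = \LogicGL$, $\mathbf{L}_2 = \LogicS$ and $\mathbf{L}_3 = \LogicD$. The rule-by-rule checks are as follows.
\begin{itemize}
  \item Axioms, weakenings and the propositional rules hold because each $\mathbf{L}_\lev$ contains all classical tautologies and is closed under modus ponens.
  \item $\RCut{\lev}{\lev}$ is sound for the same reason, since it is a propositional consequence of its two premises.
  \item $\RBoxGL{1}{1}$ is the usual soundness of the $\LogicGL$ box rule, using necessitation and the Löb axiom.
  \item $\RBoxL{2}{2}$ is sound because $\LogicS \proves \Box A \limp A$.
  \item $\RLift{1}{2}$ is sound because $\LogicGL \subseteq \LogicS$.
\end{itemize}
The only delicate case is $\RLift{2}{3}$. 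Here one needs that $\LogicS \proves \lconj\Box\Gamma \limp \ldisj\Box\Delta$ implies $\LogicD \proves \lconj\Box\Gamma \limp \ldisj\Box\Delta$. I would take this from the analysis in \cite{KKIM25} (and Beklemishev's semantics for $\LogicD$ \cite{Bek89}). The idea is that for boxed formulas the extra $\LogicS$-reflection can be simulated in $\LogicD$ using $\lnot\Box\bot$ and the disjunction axiom $\Box(\Box A \lor \Box B) \limp \Box A \lor \Box B$.

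\emph{Completeness step.} Next I invoke the left-to-right direction of Proposition \ref{prop:GentzenD_provability} for the cut-free system: if $\mathbf{L}_\lev \proves \lconj\Gamma \limp \ldisj\Delta$, then $\GentzenD \proves \Gamma \seq{\lev} \Delta$ without cut. This is established semantically as in \cite{KKIM25}. For a sequent with no cut-free proof, one builds a saturated failed proof-search tree and reads off a countermodel: a finite transitive irreflexive Kripke model for level $1$, a tail model in the style of Visser \cite{Vis84} for level $2$, and a Beklemishev-style $\LogicD$-model for level $3$.

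Given both steps, cut-admissibility is immediate. A derivation in $\GentzenD + \RCut{\lev}{\lev}$ yields $\mathbf{L}_\lev$-provability by soundness, and hence a cut-free derivation by completeness.

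The main obstacle is the completeness of the cut-free calculus at level $3$. Our version differs from \cite{KKIM25} in two ways: the axioms are restricted to atoms, and $\land$, $\lor$ are primitive. So I would first check that Lemma \ref{lem:GentzenD_axioms} and the invertibility of the propositional rules let the countermodel construction of \cite{KKIM25} go through unchanged. A failed search at level $3$ must be reduced either to a failed level-$2$ search for the boxed part, via $\RLift{2}{3}$, or to a propositional failure. Showing this reduction gives a $\LogicD$-countermodel is where the real work lies.
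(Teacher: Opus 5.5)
Your proposal is correct and takes essentially the paper's route. The paper gives no proof of its own: it cites \cite[Theorem 4.8]{KKIM25} and remarks that, for the modified calculus, the result is recovered semantically. That is the argument you outline: soundness of $\GentzenD + \RCut{\lev}{\lev}$, with the $\RLift{2}{3}$ case resting on the transfer from $\LogicS$ to $\LogicD$ for boxed sequents in \cite{KKIM25}, together with countermodel-based completeness of the cut-free calculus. One small slip: the completeness you need is the right-to-left (``if'') direction of Proposition \ref{prop:GentzenD_provability}, not the left-to-right one.
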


\begin{lem}[Inversion rules $\Inv{\RImpL{\lev}{\lev}}$, $\Inv{\RImpR{\lev}{\lev}}$] \label{lem:GentzenD_inversion}
  In $\GentzenD$ the inversion rules $\Inv{\RImpL{\lev}{\lev}}$ and $\Inv{\RImpR{\lev}{\lev}}$ are height-preservingly admissible.
  \begin{center}
    \begin{minipage}{0.46\textwidth}\centering
      \begin{prooftree}
        \AxiomC{$\GentzenD \proves^h A \limp B, \Gamma \seq{\lev} \Delta$}
        \RightLabel{$\Inv{\RImpL{\lev}{\lev}}$}
        \UnaryInfC{$\GentzenD \proves^h \Gamma \seq{\lev} \Delta, A$}
      \end{prooftree}
    \end{minipage}\hfill
    \begin{minipage}{0.46\textwidth}\centering
      \begin{prooftree}
        \AxiomC{$\GentzenD \proves^h A \limp B, \Gamma \seq{\lev} \Delta$}
        \RightLabel{$\Inv{\RImpL{\lev}{\lev}}$}
        \UnaryInfC{$\GentzenD \proves^h B, \Gamma \seq{\lev} \Delta$}
      \end{prooftree}
    \end{minipage}

    \medskip

    \begin{prooftree}
      \AxiomC{$\GentzenD \proves^h \Gamma \seq{\lev} \Delta, A \limp B$}
      \RightLabel{$\Inv{\RImpR{\lev}{\lev}}$}
      \UnaryInfC{$\GentzenD \proves^h A, \Gamma \seq{\lev} \Delta, B$}
    \end{prooftree}
  \end{center}
\end{lem}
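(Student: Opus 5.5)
We will prove the three claims simultaneously by induction on the height $h$ of the proof-tree, exactly as in Lemmas \ref{lem:GentzenS_inversion_ImpL} and \ref{lem:GentzenS_inversion_ImpR}, uniformly in $\lev = 1, 2, 3$. For $\Inv{\RImpL{\lev}{\lev}}$ the induction hypothesis is $\GentzenD \proves^h A \limp B, \Gamma \seq{\lev} \Delta$, and we distinguish cases by the last rule. If $A \limp B$ is principal, that rule is either $\RImpL{\lev}{\lev}$ or $\RWL{\lev}{\lev}$.

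\begin{itemize}
\item In the $\RImpL{\lev}{\lev}$ case, the premises are already the desired sequents of height $h-1 \le h$. Because sequents are sets, $A \limp B$ may also occur in the context $\Gamma$ of the premises. We then apply the induction hypothesis to the premises to remove it; this is the only subtlety and is handled by the induction.
\item In the $\RWL{\lev}{\lev}$ case, we have $\GentzenD \proves^{h-1} \Gamma \seq{\lev} \Delta$. One application of $\RWR{\lev}{\lev}$ (respectively $\RWL{\lev}{\lev}$) gives height $h$.
\end{itemize}

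If $A \limp B$ is not principal and the last rule is one of the propositional rules, weakenings, $\RBoxL{2}{2}$ or $\RLift{1}{2}$, then $A \limp B$ occurs in the premises at the same level. We apply the induction hypothesis to each premise and reapply the same rule. This works because all these rules have arbitrary contexts $\Gamma, \Delta$ and are context-preserving. The base cases $\RAx{\lev}$ and $\RBotL{\lev}$ cannot contain $A \limp B$ in the antecedent unless it is principal, which is impossible.

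The remaining rules are $\RBoxGL{1}{1}$ and the new rule $\RLift{2}{3}$. Their conclusions contain only boxed formulas in the antecedent (and succedent). So an unboxed implication $A \limp B$ cannot occur in the antecedent of their conclusion, and these cases are vacuous. This observation is the point I expect to need checking for the new rule $\RLift{2}{3}$. It holds since $\RLift{2}{3}$ requires the form $\Box\Gamma \seq{3} \Box\Delta$.

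For $\Inv{\RImpR{\lev}{\lev}}$ we proceed symmetrically. In the principal cases $\RImpR{\lev}{\lev}$ and $\RWR{\lev}{\lev}$, the latter is handled by $\RWL{\lev}{\lev}$ and $\RWR{\lev}{\lev}$ applied to the height-$(h-1)$ premise. In nonprincipal cases we use the induction hypothesis and reapply the rule. Again $\RBoxGL{1}{1}$ and $\RLift{2}{3}$ have only boxed formulas in the succedent, so an implication in the succedent rules them out.
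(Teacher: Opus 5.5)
\textbf{There is a genuine gap, and it is one your argument shares with the paper's.} Your argument is the paper's own: the paper just refers back to Lemmas \ref{lem:GentzenS_inversion_ImpL} and \ref{lem:GentzenS_inversion_ImpR}. Your treatment of the principal $\RImpL{\lev}{\lev}$ case, and of the vacuous $\RBoxGL{1}{1}$ and $\RLift{2}{3}$ cases, is fine.

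The gap is the non-principal step ``apply the induction hypothesis to each premise and reapply the same rule''. Sequents are sets and contraction is built into the rules. So $A \limp B$ can lie in the context of the conclusion and at the same time be a side formula of the last rule in its premise. The induction hypothesis then deletes that side formula as well, and the rule cannot be reapplied without extra height.

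This cannot be patched, because the statement fails as written. Start from $s \seq{1} s$ (height $h_0$) and obtain $q \limp r, s \seq{1} s$ by $\RWL{1}{1}$. Then apply $\RAndL{1}{1}$ with side formulas $q \limp r, s$ and context $\{q \limp r\}$; this is legitimate, since $A, \Gamma = \Gamma$ when $A \in \Gamma$. The result is $q \limp r, (q \limp r) \land s \seq{1} s$ at height $h_0 + 2$.

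Take $\Gamma = \{(q \limp r) \land s\}$, which does not contain $q \limp r$. Then $\Inv{\RImpL{1}{1}}$ demands $(q \limp r) \land s \seq{1} s, q$ at height $h_0+2$. A proof of this sequent must end with one of two rules, since the other weakenings yield underivable sequents:
\begin{itemize}
\item $\RAndL{1}{1}$, with premise $q \limp r, s \seq{1} s, q$ (possibly with $(q \limp r) \land s$ kept); or
\item $\RWR{1}{1}$, with premise $(q \limp r) \land s \seq{1} s$.
\end{itemize}
None of these premises is an axiom or follows from axioms by a single rule, so height $h_0+3$ is needed.

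The same derivations work at levels $2$ and $3$. The right-hand version fails in the same way. The sequent $s \seq{1} q \limp r, (q \limp r) \lor s$ has height $h_0+2$: use $\RWR{1}{1}$, then $\ROrR{1}{1}$ with $q \limp r$ also in the context. But $q, s \seq{1} r, (q \limp r) \lor s$ needs height at least $h_0+3$, which refutes $\Inv{\RImpR{1}{1}}$.

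So neither your proof nor the paper's establishes the lemma, and no proof can. A restricted version does hold, and it is what Lemma \ref{lem:embedding_D_to_DPL_general} actually uses. Restrict to sequents all of whose formulas are subformulas of Visser translations; this class is preserved when passing to premises in cut-free proofs. In this class, every implication that is a proper subformula occurs directly under a $\Box$.

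On the antecedent side, set aside $\RBoxGL{1}{1}$, whose conclusion contains no unboxed implication. The only other rule that can have $A \limp B$ as an antecedent side formula of a premise is then $\RBoxL{2}{2}$ with principal formula $\Box(A \limp B)$. In the merged case, apply the induction hypothesis to the premise and then add $\Box(A \limp B)$ by one $\RWL{2}{2}$; this stays within height $h$. On the succedent side, no rule other than $\RBoxGL{1}{1}$ can have an implication as a side formula. So for $\Inv{\RImpR{\lev}{\lev}}$ your non-principal step goes through verbatim within this class.
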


\begin{proof}
  The argument is the same as for Lemmas \ref{lem:GentzenS_inversion_ImpL} and \ref{lem:GentzenS_inversion_ImpR}.
\end{proof}

\section{Sequent calculus $\GentzenDPL$ for propositional logic $\LogicDPL$} \label{sect:GentzenDPL}

We define $\GentzenDPL$, the sequent calculus for $\LogicDPL$, which is obtained by adding to $\GentzenSPL$ a single rule $\RLift{2}{3}$ that arises naturally from the lift rule of $\GentzenD$.

\begin{defn}
  Let $\Gamma, \Delta$ be sets of formulas and let $\lev = 1, 2, 3$.
  An expression of the form $\Gamma \seq{\lev} \Delta$ is called a \emph{sequent}.
  The sequent calculus $\GentzenDPL$ for $\LogicDPL$, in which sequents of level $\lev = 1, 2, 3$ occur, is defined by the following rules.

  \begin{center}
    \begin{minipage}{0.46\textwidth}\centering
      \begin{prooftree}
        \AxiomC{}
        \RightLabel{$\RAx{\lev}$}
        \UnaryInfC{$p \seq{\lev} p$}
      \end{prooftree}
      where $p \in \PropVar$.
    \end{minipage}\hfill
    \begin{minipage}{0.46\textwidth}\centering
      \begin{prooftree}
        \AxiomC{}
        \RightLabel{$\RBotL{\lev}$}
        \UnaryInfC{$\bot \seq{\lev} {}$}
      \end{prooftree}
    \end{minipage}

    \medskip

    \begin{minipage}{0.46\textwidth}\centering
      \begin{prooftree}
        \AxiomC{$\Gamma \seq{\lev} \Delta$}
        \RightLabel{$\RWL{\lev}{\lev}$}
        \UnaryInfC{$A, \Gamma \seq{\lev} \Delta$}
      \end{prooftree}
    \end{minipage}\hfill
    \begin{minipage}{0.46\textwidth}\centering
      \begin{prooftree}
        \AxiomC{$\Gamma \seq{\lev} \Delta$}
        \RightLabel{$\RWR{\lev}{\lev}$}
        \UnaryInfC{$\Gamma \seq{\lev} \Delta, A$}
      \end{prooftree}
    \end{minipage}

    \medskip

    \begin{minipage}{0.46\textwidth}\centering
      \begin{prooftree}
        \AxiomC{$A, B, \Gamma \seq{\lev} \Delta$}
        \RightLabel{$\RAndL{\lev}{\lev}$}
        \UnaryInfC{$A \land B, \Gamma \seq{\lev} \Delta$}
      \end{prooftree}
    \end{minipage}\hfill
    \begin{minipage}{0.46\textwidth}\centering
      \begin{prooftree}
        \AxiomC{$\Gamma \seq{\lev} \Delta, A$}
        \AxiomC{$\Gamma \seq{\lev} \Delta, B$}
        \RightLabel{$\RAndR{\lev}{\lev}$}
        \BinaryInfC{$\Gamma \seq{\lev} \Delta, A \land B$}
      \end{prooftree}
    \end{minipage}

    \medskip

    \begin{minipage}{0.46\textwidth}\centering
      \begin{prooftree}
        \AxiomC{$A, \Gamma \seq{\lev} \Delta$}
        \AxiomC{$B, \Gamma \seq{\lev} \Delta$}
        \RightLabel{$\ROrL{\lev}{\lev}$}
        \BinaryInfC{$A \lor B, \Gamma \seq{\lev} \Delta$}
      \end{prooftree}
    \end{minipage}\hfill
    \begin{minipage}{0.46\textwidth}\centering
      \begin{prooftree}
        \AxiomC{$\Gamma \seq{\lev} \Delta, A, B$}
        \RightLabel{$\ROrR{\lev}{\lev}$}
        \UnaryInfC{$\Gamma \seq{\lev} \Delta, A \lor B$}
      \end{prooftree}
    \end{minipage}

    \medskip

    \begin{minipage}{0.46\textwidth}\centering
      \begin{prooftree}
        \AxiomC{$\Gamma \seq{2} \Delta, A$}
        \AxiomC{$B, \Gamma \seq{2} \Delta$}
        \RightLabel{$\RImpL{2}{2}$}
        \BinaryInfC{$A \limp B, \Gamma \seq{2} \Delta$}
      \end{prooftree}
    \end{minipage}\hfill
    \begin{minipage}{0.46\textwidth}\centering
      \begin{prooftree}
        \AxiomC{$\Gamma \seq{1} \Delta$}
        \RightLabel{$\RLift{1}{2}$}
        \UnaryInfC{$\Gamma \seq{2} \Delta$}
      \end{prooftree}
    \end{minipage}

    \medskip

    \begin{prooftree}
      \AxiomC{$\Delta_i, \Sigma, A \limp B, A \seq{1} B, \Gamma_i : 0 \leq i < 2^k$}
      \RightLabel{$\RImpR{1}{1}$}
      \UnaryInfC{$\Sigma, \{C_j \limp D_j : 0 \le j < k\} \seq{1} A \limp B$}
    \end{prooftree}

    where $k$, $\Delta_i$ and $\Gamma_i$ are defined in the same way as in $\GentzenSPL$.

    \medskip

    \begin{prooftree}
      \AxiomC{$\{A_i \limp B_i : 0 \le i < n\}, \Phi \seq{2} \{C_j \limp D_j : 0 \le j < m\}$}
      \RightLabel{$\RLift{2}{3}$}
      \UnaryInfC{$\{A_i \limp B_i : 0 \le i < n\}, \Phi \seq{3} \{C_j \limp D_j : 0 \le j < m\}$}
    \end{prooftree}

    where $n, m \geq 0$, the formulas $A_0, \dots, A_{n-1}, B_0, \dots, B_{n-1}, C_0, \dots, C_{m-1}, D_0, \dots, D_{m-1}$ are arbitrary, and $\Phi$ is a set of propositional variables.
  \end{center}
\end{defn}

The reason why propositional variables are allowed to occur in the antecedent of $\RLift{2}{3}$, but not in the succedent, will be explained in Section \ref{sect:Embedding_GentzenDPL_GentzenD} (Lemmas \ref{lem:GD_box_to_Vis} and \ref{lem:GD_Vis_to_box}, and Remark \ref{rem:Lift_no_succedent_variables}).

As in Section \ref{sect:GentzenSPL}, the identity sequent is derivable for every formula.

\begin{lem} \label{lem:GentzenDPL_axioms}
  For every $A \in \FmlPL$, $\GentzenDPL \proves A \seq{\lev} A$.
\end{lem}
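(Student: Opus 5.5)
We prove the claim by induction on $A$, handling $\lev = 1, 2$ exactly as in Lemma \ref{lem:GentzenSPL_axioms} and treating $\lev = 3$ separately. The rules used in that proof ($\RAx{\lev}$, $\RBotL{\lev}$, weakening, the $\land$/$\lor$ rules, $\RImpR{1}{1}$ and $\RLift{1}{2}$) are all rules of $\GentzenDPL$ with the same shape. Hence the derivations of $A \seq{1} A$ and $A \seq{2} A$ carry over verbatim.

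For $\lev = 3$ the difficulty is that $\GentzenDPL$ has no general lift from level $2$ to level $3$. The only rule connecting these levels is $\RLift{2}{3}$. It requires the antecedent to consist of implications and propositional variables, and the succedent to consist of implications only. There are also no level-$3$ implication rules at all. So we cannot simply lift $A \seq{2} A$ for arbitrary $A$, and the induction at level $3$ must use $\RLift{2}{3}$ only where its side condition is met.

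The induction at level $3$ proceeds by cases on $A$:
\begin{itemize}
  \item If $A \equiv p$, the sequent $p \seq{3} p$ is an instance of $\RAx{3}$.
  \item If $A \equiv \bot$, we apply $\RBotL{3}$ followed by $\RWR{3}{3}$.
  \item If $A \equiv B \land C$ or $A \equiv B \lor C$, we use the induction hypothesis at level $3$ for $B$ and $C$. Together with $\RWL{3}{3}$ and $\RWR{3}{3}$ and the level-$3$ rules $\RAndL{3}{3}$, $\RAndR{3}{3}$, $\ROrL{3}{3}$, $\ROrR{3}{3}$, this gives $A \seq{3} A$ as in the usual argument.
  \item If $A \equiv B \limp C$, we first obtain $B \limp C \seq{2} B \limp C$ from the already established level-$2$ case. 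We then apply $\RLift{2}{3}$ with $n = m = 1$ and $\Phi = \emptyset$. Both sides are then a single implication, so the side condition is met, and we get $B \limp C \seq{3} B \limp C$.
\end{itemize}

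The implication case is the only point needing care, and it is harmless. No level-$3$ rule for $\limp$ is ever invoked, because the implication is not decomposed at level $3$ but handled entirely at levels $1$ and $2$ and then lifted as a whole. The remaining point is the order of proof: we establish the statement for $\lev = 1, 2$ first, for all $A$, and only then run the level-$3$ induction. This ensures the level-$2$ identity for implications is available whenever it is needed.
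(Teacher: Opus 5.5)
Your proposal is correct and follows the paper's proof: levels $1$ and $2$ go through exactly as in Lemma \ref{lem:GentzenSPL_axioms}, and at level $3$ the only new step is applying $\RLift{2}{3}$ to $B \limp C \seq{2} B \limp C$, which is allowed because each side consists of a single implication. Establishing levels $1,2$ before level $3$ is a harmless variant of the paper's uniform induction on $A$.
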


\begin{proof}
  The argument is the same as for Lemma \ref{lem:GentzenSPL_axioms}.
  When $\lev = 3$ and $A \equiv B \limp C$, we further apply $\RLift{2}{3}$ to $B \limp C \seq{2} B \limp C$, which is applicable since both sides consist of implications.
\end{proof}

We define \emph{Dzhaparidze Propositional Logic} $\LogicDPL$ as the set of formulas $\{A \in \FmlPL : \GentzenDPL \proves \seqIII A\}$.

\section{Embedding of $\GentzenDPL$ into $\GentzenD$} \label{sect:Embedding_GentzenDPL_GentzenD}

The embedding of $\GentzenDPL$ into $\GentzenD$ is obtained by adapting the argument of Section \ref{sect:Embedding_GentzenSPL_GentzenS},
where the only new rule is $\RLift{2}{3}$.

\begin{lem} \label{lem:GentzenD_admits_BoxK4}
  The rule $\RBoxKFour{1}{\lev}$ is admissible in $\GentzenD$.
  \begin{prooftree}
    \AxiomC{$\Box\Gamma, \Gamma \seq{1} A$}
    \RightLabel{$\RBoxKFour{1}{\lev}$}
    \UnaryInfC{$\Box\Gamma \seq{\lev} \Box A$}
  \end{prooftree}
\end{lem}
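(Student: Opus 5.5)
The plan is to mirror the proof of Lemma \ref{lem:GentzenS_admits_BoxK4} for $\lev = 1, 2$, and then handle the new level $\lev = 3$ with the additional lift rule $\RLift{2}{3}$ of $\GentzenD$.

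For $\lev = 1$, we start from the premise $\Box\Gamma, \Gamma \seq{1} A$. Weakening with $\RWL{1}{1}$ adds $\Box A$ to the antecedent, giving $\Box\Gamma, \Gamma, \Box A \seq{1} A$. One application of $\RBoxGL{1}{1}$ then yields $\Box\Gamma \seq{1} \Box A$.

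For $\lev = 2$, we apply $\RLift{1}{2}$ to this sequent and obtain $\Box\Gamma \seq{2} \Box A$.

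For $\lev = 3$, the key observation is that $\Box\Gamma \seq{2} \Box A$ has the shape required by $\RLift{2}{3}$. Its antecedent is a set of boxed formulas $\Box\Gamma$, and its succedent is the set $\Box\{A\}$ of boxed formulas. So $\RLift{2}{3}$ applies directly with the premise's $\Gamma$ and $\Delta := \{A\}$, giving $\Box\Gamma \seq{3} \Box A$.

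No step here is a real obstacle. The only point to check is that the side condition of $\RLift{2}{3}$ is met: every formula on both sides of the level-$2$ sequent must be boxed. This holds because the conclusion of $\RBoxGL{1}{1}$ already has exactly this form, and $\RLift{1}{2}$ does not change it.
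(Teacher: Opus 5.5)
Your proposal is correct and matches the paper's proof: you derive $\Box\Gamma \seq{1} \Box A$ by $\RWL{1}{1}$ followed by $\RBoxGL{1}{1}$, then apply $\RLift{1}{2}$, and for $\lev = 3$ also $\RLift{2}{3}$. You rightly note that the side condition of $\RLift{2}{3}$ holds because both sides of the level-$2$ sequent consist only of boxed formulas.
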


\begin{proof}
  As in Lemma \ref{lem:GentzenS_admits_BoxK4}, we first obtain $\Box\Gamma \seq{1} \Box A$.
  When $\lev = 2$, we apply $\RLift{1}{2}$, and when $\lev = 3$, we apply $\RLift{1}{2}$ and $\RLift{2}{3}$, the latter being applicable since both $\Box\Gamma$ and $\Box A$ consist of boxed formulas.
\end{proof}

\begin{lem} \label{lem:embedding_DPL_to_D_aux1}
  For every $A \in \FmlPL$, $\GentzenD \proves A^\Vis \seq{\lev} \Box A^\Vis$.
\end{lem}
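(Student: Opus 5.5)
The plan is to repeat the proof of Lemma \ref{lem:embedding_SPL_to_S_aux1} verbatim: induction on $A$, uniformly in $\lev = 1, 2, 3$. The point to check is that no step needs a lift from level $2$ to level $3$ applied to a sequent that is not purely boxed. Such a lift is unavailable, because $\RLift{2}{3}$ requires both sides to consist of boxed formulas, while $A^\Vis$ in general is not boxed. So I will avoid proving the claim at level $2$ and lifting it. Instead, every derivation is built directly at level $\lev$, using only the following ingredients.

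\begin{itemize}
\item The level-uniform rules of $\GentzenD$: $\RAx{\lev}$, $\RBotL{\lev}$, weakening, and the rules for $\land$ and $\lor$. These exist at every level, including $3$.
\item The admissible rule $\RBoxKFour{1}{\lev}$ of Lemma \ref{lem:GentzenD_admits_BoxK4}. It already handles the passage from level $1$ to level $3$ correctly, since its conclusion $\Box\Gamma \seq{\lev} \Box A$ is purely boxed.
\item The cut rule $\RCut{\lev}{\lev}$, which is admissible at every level by Proposition \ref{prop:GentzenD_cut_elimination}.
\item The identity sequents of Lemma \ref{lem:GentzenD_axioms}, used at level $1$ as premises of $\RBoxKFour{1}{\lev}$.
\end{itemize}

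The cases go as follows.

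\begin{itemize}
\item Case $A \equiv \bot$: $\bot \seq{\lev} \Box\bot$ follows from $\RBotL{\lev}$ and $\RWR{\lev}{\lev}$.
\item Case $A \equiv p$: copy the earlier tree. Derive $\Box p, p \seq{1} p \land \Box p$ by $\RAndR{1}{1}$ from identity sequents and weakening. Apply $\RBoxKFour{1}{\lev}$ to get $\Box p \seq{\lev} \Box(p \land \Box p)$. Finish at level $\lev$ with $\RWL{\lev}{\lev}$ and $\RAndL{\lev}{\lev}$.
\item Case $A \equiv B \limp C$: a single application of $\RBoxKFour{1}{\lev}$ to the weakened identity sequent $\Box(B^\Vis \limp C^\Vis), B^\Vis \limp C^\Vis \seq{1} \Box(B^\Vis \limp C^\Vis)$ suffices.
\item Case $A \equiv B \land C$: the earlier tree works word for word at level $\lev$. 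The induction hypotheses are the level-$\lev$ sequents $B^\Vis \seq{\lev} \Box B^\Vis$ and $C^\Vis \seq{\lev} \Box C^\Vis$. The sequent $\Box B^\Vis, \Box C^\Vis \seq{\lev} \Box(B^\Vis \land C^\Vis)$ comes from $\RBoxKFour{1}{\lev}$. The two are combined by $\RCut{\lev}{\lev}$, followed by $\RAndL{\lev}{\lev}$.
\item Case $A \equiv B \lor C$ (omitted earlier): by the induction hypothesis we have $B^\Vis \seq{\lev} \Box B^\Vis$. By $\RBoxKFour{1}{\lev}$ applied to $\Box B^\Vis, B^\Vis \seq{1} B^\Vis \lor C^\Vis$ (identity, weakening, $\ROrR{1}{1}$) we have $\Box B^\Vis \seq{\lev} \Box(B^\Vis \lor C^\Vis)$. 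Cutting at level $\lev$ gives $B^\Vis \seq{\lev} \Box(B^\Vis \lor C^\Vis)$. The same argument gives the analogous sequent for $C^\Vis$, and $\ROrL{\lev}{\lev}$ concludes.
\end{itemize}

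I expect no real obstacle beyond the structural check described at the start. Every occurrence of "$\RLift{1}{2}$" in the proofs for $\GentzenS$ happened inside $\RBoxKFour{1}{\lev}$, whose $\GentzenD$-version already covers level $3$. All remaining rules used are available at level $3$, and cut is admissible there as well.
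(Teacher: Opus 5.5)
Your proposal is correct and is essentially the paper's proof. The paper just says the argument is the same as for Lemma \ref{lem:embedding_SPL_to_S_aux1}: induction on $A$ uniformly in $\lev$, with every change of level confined to $\RBoxKFour{1}{\lev}$ (which in $\GentzenD$ reaches level $3$ via $\RLift{2}{3}$ applied to a purely boxed sequent) and cuts at level $\lev$. Your check that no $\RLift{2}{3}$ is ever applied to a non-boxed sequent, and your explicit $\bot$ and $\lor$ cases, fill in exactly what the paper leaves implicit.
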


\begin{proof}
  Same as for Lemma \ref{lem:embedding_SPL_to_S_aux1}.
\end{proof}

\begin{lem} \label{lem:embedding_DPL_to_D_aux2}
  The following rule is admissible in $\GentzenD$.
  \begin{prooftree}
    \AxiomC{$\Sigma^\Vis, \Gamma, \Box A \seq{1} A$}
    \UnaryInfC{$\Sigma^\Vis, \Box\Gamma \seq{\lev} \Box A$}
  \end{prooftree}
\end{lem}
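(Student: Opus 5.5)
The plan is to follow the proof of Lemma \ref{lem:embedding_SPL_to_S_aux2}. The one change concerns the level-$3$ case: the final lift must happen \emph{before} the cuts, not after them. The reason is that $\RLift{2}{3}$ in $\GentzenD$ only applies when both sides consist entirely of boxed formulas. The formulas in $\Sigma^\Vis$, such as $p \land \Box p$ or $B^\Vis \lor C^\Vis$, are generally not boxed, so we cannot first reach $\Sigma^\Vis, \Box\Gamma \seq{2} \Box A$ and then lift it.

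First, starting from the premise $\Sigma^\Vis, \Gamma, \Box A \seq{1} A$, I would apply $\RWL{1}{1}$ to add $\Box\Sigma^\Vis$ and $\Box\Gamma$. I would then drop the auxiliary $\Box A$ so as to match the form $\Box\Gamma', \Gamma' \seq{1} A$ with $\Gamma' = \Sigma^\Vis \cup \Gamma$. One route is to apply $\RBoxGL{1}{1}$ directly, exactly as in the $\LogicS$ case, which gives $\Box\Sigma^\Vis, \Box\Gamma \seq{1} \Box A$. Then I would lift to the desired level $\lev$ by $\RLift{1}{2}$, and additionally by $\RLift{2}{3}$ when $\lev = 3$. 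The latter application is legitimate because the sequent $\Box\Sigma^\Vis, \Box\Gamma \seq{2} \Box A$ consists only of boxed formulas. This is the same mechanism already used in Lemma \ref{lem:GentzenD_admits_BoxK4}. The result is $\GentzenD \proves \Box\Sigma^\Vis, \Box\Gamma \seq{\lev} \Box A$ for each $\lev = 1, 2, 3$.

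Second, for each $B \in \Sigma$, Lemma \ref{lem:embedding_DPL_to_D_aux1} at the same level $\lev$ provides $B^\Vis \seq{\lev} \Box B^\Vis$. I would apply $\RCut{\lev}{\lev}$ once per element of $\Sigma$, which is admissible by Proposition \ref{prop:GentzenD_cut_elimination}, to replace $\Box B^\Vis$ by $B^\Vis$ successively. This yields $\Sigma^\Vis, \Box\Gamma \seq{\lev} \Box A$. Since $\Sigma$ is finite, only finitely many cuts are needed, and the sets-based notation absorbs any contractions.

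The only delicate point is that Lemma \ref{lem:embedding_DPL_to_D_aux1} really holds at level $3$, since its proof was only cited as ``the same''. I would check this case by case. In the case of $p$, $\RBoxKFour{1}{3}$ from Lemma \ref{lem:GentzenD_admits_BoxK4} produces $\Box p \seq{3} \Box(p \land \Box p)$, and $\RWL{3}{3}$ and $\RAndL{3}{3}$ are available at level $3$. The cases of $\land$ and $\lor$ use only the propositional rules at level $3$, $\RBoxKFour{1}{3}$, and $\RCut{3}{3}$. The case of $\limp$ is a single application of $\RBoxKFour{1}{3}$. So none of these cases needs to lift a sequent containing unboxed formulas. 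I therefore expect the main obstacle to be recognising that the lift must precede the cuts; after that, the argument goes through verbatim.
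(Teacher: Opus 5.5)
Your proposal is correct and follows the paper's proof exactly. You derive $\Box\Sigma^\Vis, \Box\Gamma \seq{1} \Box A$ by $\RWL{1}{1}$ and $\RBoxGL{1}{1}$, lift it with $\RLift{1}{2}$ and $\RLift{2}{3}$ while every formula is still boxed, and only then remove each $\Box B^\Vis$ by $\RCut{\lev}{\lev}$ against Lemma~\ref{lem:embedding_DPL_to_D_aux1}. Your case check that Lemma~\ref{lem:embedding_DPL_to_D_aux1} holds at level $3$ is a sound verification of what the paper leaves as ``the same'', since every lift there passes through $\RBoxKFour{1}{3}$ on a fully boxed sequent.
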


\begin{proof}
  As in Lemma \ref{lem:embedding_SPL_to_S_aux2}, we first obtain $\Box\Sigma^\Vis, \Box\Gamma \seq{1} \Box A$ by $\RWL{1}{1}$ and $\RBoxGL{1}{1}$.
  Since every formula in this sequent is boxed, we can lift it to $\Box\Sigma^\Vis, \Box\Gamma \seq{\lev} \Box A$ by applying $\RLift{1}{2}$ and $\RLift{2}{3}$ as appropriate.
  Then, by Lemma \ref{lem:embedding_DPL_to_D_aux1} and $\RCut{\lev}{\lev}$, we successively replace the elements of $\Box\Sigma^\Vis$ and obtain $\Sigma^\Vis, \Box\Gamma \seq{\lev} \Box A$.
\end{proof}

The following two lemmas explain the shape of the rule $\RLift{2}{3}$ of $\GentzenDPL$:
in $\GentzenD$, the formulas $p^\Vis = p \land \Box p$ and $\Box p$ are interderivable at level $2$, while at level $3$ only the direction from $p^\Vis$ to $\Box p$ survives.
Hence a boxed variable $\Box p$ occurring in the antecedent of an application of $\RLift{2}{3}$ of $\GentzenD$ corresponds, on the propositional side, to the bare propositional variable $p$.

\begin{lem} \label{lem:GD_box_to_Vis}
  For every $p \in \PropVar$, $\GentzenD \proves \Box p \seq{2} p^\Vis$.
\end{lem}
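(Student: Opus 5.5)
We need $\Box p \seq{2} p \land \Box p$. Since $p^\Vis = p \land \Box p$, it suffices to derive the two premises of $\RAndR{2}{2}$ and apply that rule:
\[
\Box p \seq{2} p
\qquad\text{and}\qquad
\Box p \seq{2} \Box p .
\]

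For the left premise, start from $\RAx{2}$, which gives $p \seq{2} p$. Then apply $\RBoxL{2}{2}$ with principal formula $\Box p$ and empty context. This yields $\Box p \seq{2} p$, so the reflection behaviour available at level $2$ is exactly what supplies the unboxed conjunct.

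For the right premise, use Lemma~\ref{lem:GentzenD_axioms} with $A \equiv \Box p$ at level $2$. Concretely, $p \seq{1} p$ comes from $\RAx{1}$. Weakening by $\RWL{1}{1}$ gives $p, \Box p \seq{1} p$, and $\RBoxGL{1}{1}$ gives $\Box p \seq{1} \Box p$. Finally $\RLift{1}{2}$ gives $\Box p \seq{2} \Box p$.

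Combining the two premises by $\RAndR{2}{2}$ gives $\Box p \seq{2} p \land \Box p$, which is $\Box p \seq{2} p^\Vis$. There is no real obstacle in this argument. The only point worth noting is that the claim is tied to level $2$: $\RBoxL{2}{2}$ has no counterpart at levels $1$ or $3$, which is why only the direction $p^\Vis \seq{} \Box p$ survives at level $3$, as remarked before the statement.
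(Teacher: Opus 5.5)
Your proof is correct and is the paper's proof: you obtain $\Box p \seq{2} p$ from $\RAx{2}$ and $\RBoxL{2}{2}$, obtain $\Box p \seq{2} \Box p$ from Lemma~\ref{lem:GentzenD_axioms}, and combine them with $\RAndR{2}{2}$. You also unfold the identity sequent correctly, using $\RBoxGL{1}{1}$ with $\Gamma = \{p\}$ and then $\RLift{1}{2}$.
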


\begin{proof}
  Follows from:
  \begin{prooftree}
    \AxiomC{}
    \RightLabel{$\RAx{2}$}
    \UnaryInfC{$p \seq{2} p$}
    \RightLabel{$\RBoxL{2}{2}$}
    \UnaryInfC{$\Box p \seq{2} p$}
    \AxiomC{\scriptsize(Lemma \ref{lem:GentzenD_axioms})}
    \noLine
    \UnaryInfC{$\Box p \seq{2} \Box p$}
    \RightLabel{$\RAndR{2}{2}$}
    \BinaryInfC{$\Box p \seq{2} p \land \Box p$}
  \end{prooftree}
\end{proof}

\begin{lem} \label{lem:GD_Vis_to_box}
  For every $p \in \PropVar$, $\GentzenD \proves p^\Vis \seq{3} \Box p$.
\end{lem}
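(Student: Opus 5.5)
We need $p \land \Box p \seq{3} \Box p$ in $\GentzenD$. The idea is to derive the sequent at level $2$, where everything is boxed, and then lift it to level $3$. The difficulty is that $\RLift{2}{3}$ only applies to sequents whose antecedent and succedent consist entirely of boxed formulas, and $p \land \Box p$ is not boxed. Also, $\GentzenD$ has no rule $\RAndL{\lev}{\lev}$ at level $3$ that lets us first decompose and then lift. Actually $\RAndL{\lev}{\lev}$ is available for all $\lev = 1,2,3$, since the definition ranges over $\lev = 1, 2, 3$. So the plan is to reduce to the boxed sequent $\Box p \seq{3} \Box p$ and then introduce the conjunction at level $3$.

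Concretely, the derivation goes as follows.

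First, by Lemma \ref{lem:GentzenD_axioms} with $\lev = 3$, we obtain $\Box p \seq{3} \Box p$ directly. Alternatively, derive $\Box p \seq{1} \Box p$ as in Lemma \ref{lem:GentzenS_original_axioms}, then apply $\RLift{1}{2}$, then $\RLift{2}{3}$; the last step is legitimate since both sides consist of boxed formulas.

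Next, apply $\RWL{3}{3}$ to get $p, \Box p \seq{3} \Box p$.

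Finally, apply $\RAndL{3}{3}$ to obtain $p \land \Box p \seq{3} \Box p$, which is $p^\Vis \seq{3} \Box p$.

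The only point needing care is that weakening and $\land$-left are primitive at level $3$ in $\GentzenD$, which the definition confirms. So there is no real obstacle here. The asymmetry with Lemma \ref{lem:GD_box_to_Vis} is that $\RBoxL{2}{2}$ has no level-$3$ counterpart, so the converse direction is unavailable at level $3$.
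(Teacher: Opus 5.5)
Your derivation (obtain $\Box p \seq{3} \Box p$ from Lemma \ref{lem:GentzenD_axioms}, weaken with $\RWL{3}{3}$, then apply $\RAndL{3}{3}$) is correct and is exactly the paper's proof. Your closing remark about the converse is only heuristic: the absence of a level-$3$ box-left rule does not by itself show non-derivability, which the paper instead attributes to $\Box A \limp A$ not being a theorem of $\LogicD$. The lemma does not depend on that remark.
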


\begin{proof}
  Follows from:
  \begin{prooftree}
    \AxiomC{\scriptsize(Lemma \ref{lem:GentzenD_axioms})}
    \noLine
    \UnaryInfC{$\Box p \seq{3} \Box p$}
    \RightLabel{$\RWL{3}{3}$}
    \UnaryInfC{$p, \Box p \seq{3} \Box p$}
    \RightLabel{$\RAndL{3}{3}$}
    \UnaryInfC{$p \land \Box p \seq{3} \Box p$}
  \end{prooftree}
\end{proof}

\begin{rem} \label{rem:Lift_no_succedent_variables}
  We do not allow propositional variables to occur in the succedent of the rule $\RLift{2}{3}$ of $\GentzenDPL$.
  As we will see in the proof of Lemma \ref{lem:embedding_DPL_to_D}, such an extension would require $\GentzenD \proves \Box p \seq{3} p^\Vis$, and hence $\GentzenD \proves \Box p \seq{3} p$, which is not available since the reflection $\Box A \limp A$ is not a theorem of $\LogicD$.
  This asymmetry does not cause any problem in the converse direction either: the succedent of the conclusion of an application of $\RLift{2}{3}$ of $\GentzenD$ consists of boxed formulas, and the boxed formulas among the translations are exactly those of implications, so no propositional variable arises there.
\end{rem}

\begin{lem} \label{lem:embedding_DPL_to_D}
  If $\GentzenDPL \proves \Gamma \seq{\lev} \Delta$, then $\GentzenD \proves \Gamma^\Vis \seq{\lev} \Delta^\Vis$.
\end{lem}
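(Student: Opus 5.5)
We argue by induction on the height of the proof-tree in $\GentzenDPL$, following Lemma \ref{lem:embedding_SPL_to_S} almost verbatim. For every rule already present in $\GentzenSPL$, the translation argument there goes through unchanged at levels $1$ and $2$. This is because $\GentzenD$ contains all the rules of $\GentzenS$ used in that proof, and Lemmas \ref{lem:GentzenD_admits_BoxK4}, \ref{lem:embedding_DPL_to_D_aux1} and \ref{lem:embedding_DPL_to_D_aux2} replace their $\GentzenS$ counterparts. Concretely:
\begin{itemize}
  \item the axioms, weakenings, $\land$/$\lor$ rules and $\RLift{1}{2}$ translate to the same rule of $\GentzenD$ at the same level (these structural and propositional rules are available at level $3$ as well);
  \item $\RImpL{2}{2}$ is simulated by $\RImpL{2}{2}$ followed by $\RBoxL{2}{2}$;
  \item $\RImpR{1}{1}$ is simulated by $2^k-1$ applications of $\RImpL{1}{1}$, then $\RImpR{1}{1}$, then Lemma \ref{lem:embedding_DPL_to_D_aux2} with $\lev=1$.
\end{itemize}
Cut is used freely by Proposition \ref{prop:GentzenD_cut_elimination}.

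The only new case is $\RLift{2}{3}$. Suppose the premise is $\{A_i \limp B_i\}, \Phi \seq{2} \{C_j \limp D_j\}$. By the induction hypothesis,
\[
\GentzenD \proves \{\Box(A_i^\Vis \limp B_i^\Vis)\}, \Phi^\Vis \seq{2} \{\Box(C_j^\Vis \limp D_j^\Vis)\},
\]
where $\Phi^\Vis = \{p \land \Box p : p \in \Phi\}$.

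\begin{itemize}
  \item \textbf{Step 1: make the premise fully boxed.} For each $p \in \Phi$, cut against Lemma \ref{lem:GD_box_to_Vis} ($\Box p \seq{2} p^\Vis$), replacing every $p^\Vis$ in the antecedent by $\Box p$. This gives
  \[
  \{\Box(A_i^\Vis \limp B_i^\Vis)\}, \Box\Phi \seq{2} \{\Box(C_j^\Vis \limp D_j^\Vis)\}.
  \]
  Both sides now consist solely of boxed formulas.
  \item \textbf{Step 2: lift.} Apply $\RLift{2}{3}$ of $\GentzenD$ to obtain the same sequent at level $3$.
  \item \textbf{Step 3: restore the translations.} Cut with Lemma \ref{lem:GD_Vis_to_box} ($p^\Vis \seq{3} \Box p$) for each $p \in \Phi$, replacing $\Box p$ back by $p^\Vis$ in the antecedent. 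This yields exactly $(\{A_i \limp B_i\}, \Phi)^\Vis \seq{3} \{C_j \limp D_j\}^\Vis$.
\end{itemize}
When $\Phi$ is empty, only Step 2 is needed.

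One bookkeeping point is where the argument needs care: the level-$3$ instances of the remaining rules. $\GentzenDPL$ has level-$3$ axioms, weakenings and $\land$/$\lor$ rules, but no level-$3$ implication rules. Hence these cases only require the matching level-$3$ rules of $\GentzenD$, which exist, so no difficulty arises.

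The main obstacle is Step 1 of the lift case. The induction hypothesis yields $p \land \Box p$ rather than a boxed formula, so $\RLift{2}{3}$ cannot be applied directly. The two directional lemmas resolve this only because each replacement happens at the right level: $\Box p \Rightarrow p^\Vis$ is used at level $2$, and $p^\Vis \Rightarrow \Box p$ at level $3$. This is also why variables may appear only in the antecedent of $\RLift{2}{3}$, as noted in Remark \ref{rem:Lift_no_succedent_variables}. I would check carefully that each cut has its cut formula on the correct side.
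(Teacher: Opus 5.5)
Your proposal is correct and follows the paper's proof essentially step for step. The inherited cases are handled as in Lemma \ref{lem:embedding_SPL_to_S} via Lemmas \ref{lem:embedding_DPL_to_D_aux1} and \ref{lem:embedding_DPL_to_D_aux2}, and the new $\RLift{2}{3}$ case goes exactly by your three steps: cut with Lemma \ref{lem:GD_box_to_Vis} at level $2$, lift, then cut with Lemma \ref{lem:GD_Vis_to_box} at level $3$. One small imprecision: $\RAx{\lev}$ does not translate to the same rule, because $p^\Vis = p \land \Box p$ is not a variable, so that case needs the derived identity sequent of Lemma \ref{lem:GentzenD_axioms} instead.
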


\begin{proof}
  We show by induction on the height of the proof-tree.
  We consider the case of $\RLift{2}{3}$; the remaining cases are handled exactly as in Lemma \ref{lem:embedding_SPL_to_S}, using Lemmas \ref{lem:embedding_DPL_to_D_aux1} and \ref{lem:embedding_DPL_to_D_aux2} in place of Lemmas \ref{lem:embedding_SPL_to_S_aux1} and \ref{lem:embedding_SPL_to_S_aux2}.

  \medskip\noindent
  \textbf{Case} $\RLift{2}{3}$:
  Suppose the rule we now wish to consider has the following form, where $\Phi$ is a set of propositional variables.
  \begin{prooftree}
    \AxiomC{$\GentzenDPL \proves^{h-1} \{A_i \limp B_i : 0 \le i < n\}, \Phi \seq{2} \{C_j \limp D_j : 0 \le j < m\}$}
    \RightLabel{$\RLift{2}{3}$}
    \UnaryInfC{$\GentzenDPL \proves^{h} \{A_i \limp B_i : 0 \le i < n\}, \Phi \seq{3} \{C_j \limp D_j : 0 \le j < m\}$}
  \end{prooftree}
  By the induction hypothesis, we have $\GentzenD \proves \{\Box(A^\Vis_i \limp B^\Vis_i) : 0 \le i < n\}, \Phi^\Vis \seq{2} \{\Box(C^\Vis_j \limp D^\Vis_j) : 0 \le j < m\}$.
  Here every formula on both sides other than $\Phi^\Vis$ is boxed, whereas the formulas $p^\Vis = p \land \Box p$ for $p \in \Phi$ are not, so the rule $\RLift{2}{3}$ of $\GentzenD$ is not applicable yet.
  First, using Lemma \ref{lem:GD_box_to_Vis} and $\RCut{2}{2}$ with the cut formula $p^\Vis$, we successively replace each $p^\Vis$ in the antecedent by $\Box p$, and obtain
  \[
    \GentzenD \proves \{\Box(A^\Vis_i \limp B^\Vis_i) : 0 \le i < n\}, \Box\Phi \seq{2} \{\Box(C^\Vis_j \limp D^\Vis_j) : 0 \le j < m\}.
  \]
  Since every formula in this sequent is boxed, the rule $\RLift{2}{3}$ of $\GentzenD$ applies and yields
  \[
    \GentzenD \proves \{\Box(A^\Vis_i \limp B^\Vis_i) : 0 \le i < n\}, \Box\Phi \seq{3} \{\Box(C^\Vis_j \limp D^\Vis_j) : 0 \le j < m\}.
  \]
  Finally, using Lemma \ref{lem:GD_Vis_to_box} and $\RCut{3}{3}$ with the cut formula $\Box p$, we successively replace each $\Box p$ back by $p^\Vis$, and obtain
  \[
    \GentzenD \proves \{\Box(A^\Vis_i \limp B^\Vis_i) : 0 \le i < n\}, \Phi^\Vis \seq{3} \{\Box(C^\Vis_j \limp D^\Vis_j) : 0 \le j < m\},
  \]
  which is nothing but $\GentzenD \proves \{A_i \limp B_i : 0 \le i < n\}^\Vis, \Phi^\Vis \seq{3} \{C_j \limp D_j : 0 \le j < m\}^\Vis$.
\end{proof}

As in Section \ref{sect:Embedding_GentzenSPL_GentzenS}, the converse direction is proved in the following strengthened form.

\begin{lem} \label{lem:embedding_D_to_DPL_general}
  Let $\Gamma, \Delta$ be sets of propositional formulas and let $\Phi_1, \Phi_2, \Psi$ be sets of propositional variables.
  If $\GentzenD \proves \Gamma^\Vis, \Phi_1, \Box\Phi_2 \seq{\lev} \Psi, \Delta^\Vis$,
  then $\GentzenDPL \proves \Gamma, \Phi_1, \Phi_2 \seq{\lev} \Psi, \Delta$.
\end{lem}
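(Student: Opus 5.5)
We prove the claim by induction on the height of a proof-tree of $\Gamma^\Vis, \Phi_1, \Box\Phi_2 \seq{\lev} \Psi, \Delta^\Vis$ in $\GentzenD$, generalized over $\Gamma, \Delta, \Phi_1, \Phi_2, \Psi$ exactly as in Lemma \ref{lem:embedding_S_to_SPL_general}. As there, no formula in the sequent is an implication, so the last rule is neither $\RImpL{\lev}{\lev}$ nor $\RImpR{\lev}{\lev}$. All rules of $\GentzenD$ other than $\RLift{2}{3}$ are shared with $\GentzenS$, and their cases carry over verbatim. This covers the axioms, weakenings, $\RLift{1}{2}$, the $\land$/$\lor$ rules with their re-decompositions, $\RBoxL{2}{2}$ (whether unboxing $\Box p$, which moves $p$ from $\Phi_2$ to $\Phi_1$, or unboxing $\Box(E^\Vis \limp F^\Vis)$), and $\RBoxGL{1}{1}$. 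Wherever Lemmas \ref{lem:GentzenS_inversion_ImpL} and \ref{lem:GentzenS_inversion_ImpR} were used, we use the height-preserving inversions of Lemma \ref{lem:GentzenD_inversion}. We also observe that the $\land$, $\lor$ and weakening rules of $\GentzenDPL$ exist at level $3$, so those cases work uniformly for $\lev = 3$.

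The only new case is $\RLift{2}{3}$. Its conclusion has the form $\Box\Pi \seq{3} \Box\Xi$, so every formula on both sides is boxed. Since the sequent is also $\Gamma^\Vis, \Phi_1, \Box\Phi_2 \seq{3} \Psi, \Delta^\Vis$, we first analyse which components can be boxed. A translation $A^\Vis$ is boxed exactly when $A$ is an implication, because $p^\Vis = p \land \Box p$ is a conjunction and $\bot$, $\land$, $\lor$ translate to non-boxed formulas. Bare variables are not boxed. Hence $\Phi_1 = \Psi = \emptyset$, and $\Gamma = \{A_i \limp B_i : 0 \le i < n\}$ and $\Delta = \{C_j \limp D_j : 0 \le j < m\}$. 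The premise is then the same sequent at level $2$, namely $\Gamma^\Vis, \Box\Phi_2 \seq{2} \Delta^\Vis$, with height $h-1$. The induction hypothesis, applied with the same decomposition, gives $\GentzenDPL \proves \Gamma, \Phi_2 \seq{2} \Delta$. The antecedent consists of implications together with the set $\Phi_2$ of propositional variables, and the succedent consists only of implications. Therefore the rule $\RLift{2}{3}$ of $\GentzenDPL$, with $\Phi := \Phi_2$, applies and yields $\Gamma, \Phi_1, \Phi_2 \seq{3} \Psi, \Delta$, as required.

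The main point to check carefully is this shape analysis. It must use the decomposition of the conclusion as given, and one must confirm that a boxed formula in the antecedent is either $(E \limp F)^\Vis$ or $\Box p$ with $p \in \Phi_2$, with no ambiguity. Such an ambiguity could arise if some $\Box p$ coincided with a translation, but $\Box p$ is never $A^\Vis$, since a boxed translation has an implication under the box. This is also why the asymmetry noted in Remark \ref{rem:Lift_no_succedent_variables} causes no trouble: the succedent cannot contain boxed variables because $\Psi$ consists of bare variables. Everything else is routine bookkeeping identical to Lemma \ref{lem:embedding_S_to_SPL_general}.
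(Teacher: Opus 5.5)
Your proposal is correct and matches the paper's proof essentially step for step. The cases other than $\RLift{2}{3}$ are inherited from Lemma~\ref{lem:embedding_S_to_SPL_general}, using the inversions of Lemma~\ref{lem:GentzenD_inversion}; in the $\RLift{2}{3}$ case, the observation that the only boxed translations are translations of implications forces $\Phi_1 = \Psi = \emptyset$ and makes $\Gamma$ and $\Delta$ sets of implications, so the induction hypothesis at level $2$ followed by $\RLift{2}{3}$ of $\GentzenDPL$ with $\Phi := \Phi_2$ gives the result.
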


\begin{proof}
  As in Lemma \ref{lem:embedding_S_to_SPL_general}, no formula in the assumed sequent is an implication, and hence the last rule of a proof-tree of the assumed sequent is neither $\RImpL{\lev}{\lev}$ nor $\RImpR{\lev}{\lev}$; note that this also excludes these rules at level $3$.
  We show by induction on the height of the proof-tree.
  All the cases other than $\RLift{2}{3}$ are handled exactly as in Lemma \ref{lem:embedding_S_to_SPL_general}.

  \medskip\noindent
  \textbf{Case} $\RLift{2}{3}$:
  We have $\Phi_1 = \Psi = \emptyset$ since every formula in the conclusion of an application of $\RLift{2}{3}$ of $\GentzenD$ is boxed.
  Moreover, since the boxed formulas among the translations are exactly those of implications,
  we have $\Gamma = \{A_i \limp B_i : 0 \le i < n\}$ and $\Delta = \{C_j \limp D_j : 0 \le j < m\}$ for some $n, m \geq 0$ and formulas $A_i, B_i$ ($0 \le i < n$), $C_j, D_j$ ($0 \le j < m$).
  Thus, the application we are concerned with is the following.
  \begin{prooftree}
    \AxiomC{$\GentzenD \proves^{h-1} \{A_i \limp B_i : 0 \le i < n\}^\Vis, \Box\Phi_2 \seq{2} \{C_j \limp D_j : 0 \le j < m\}^\Vis$}
    \RightLabel{$\RLift{2}{3}$}
    \UnaryInfC{$\GentzenD \proves^{h} \{A_i \limp B_i : 0 \le i < n\}^\Vis, \Box\Phi_2 \seq{3} \{C_j \limp D_j : 0 \le j < m\}^\Vis$}
  \end{prooftree}
  The induction hypothesis applied to the premise yields $\GentzenDPL \proves \{A_i \limp B_i : 0 \le i < n\}, \Phi_2 \seq{2} \{C_j \limp D_j : 0 \le j < m\}$.
  Since $\{A_i \limp B_i : 0 \le i < n\}$ and $\{C_j \limp D_j : 0 \le j < m\}$ consist of implications and $\Phi_2$ consists of propositional variables,
  the rule $\RLift{2}{3}$ of $\GentzenDPL$ applies and yields
  $\GentzenDPL \proves \{A_i \limp B_i : 0 \le i < n\}, \Phi_2 \seq{3} \{C_j \limp D_j : 0 \le j < m\}$,
  which is the desired sequent since $\Phi_1 = \emptyset$.
\end{proof}

\begin{lem} \label{lem:embedding_D_to_DPL}
  If $\GentzenD \proves \Gamma^\Vis \seq{\lev} \Delta^\Vis$, then $\GentzenDPL \proves \Gamma \seq{\lev} \Delta$.
\end{lem}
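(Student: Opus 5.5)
This is the specialization of Lemma \ref{lem:embedding_D_to_DPL_general} to the case without auxiliary variables, exactly parallel to how Lemma \ref{lem:embedding_S_to_SPL} was obtained from Lemma \ref{lem:embedding_S_to_SPL_general}. I would take $\Phi_1 = \Phi_2 = \Psi = \emptyset$ in Lemma \ref{lem:embedding_D_to_DPL_general}. Then the hypothesis $\GentzenD \proves \Gamma^\Vis, \Phi_1, \Box\Phi_2 \seq{\lev} \Psi, \Delta^\Vis$ is literally the assumption $\GentzenD \proves \Gamma^\Vis \seq{\lev} \Delta^\Vis$, since $\Box\emptyset = \emptyset$. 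The conclusion $\GentzenDPL \proves \Gamma, \Phi_1, \Phi_2 \seq{\lev} \Psi, \Delta$ is then exactly $\GentzenDPL \proves \Gamma \seq{\lev} \Delta$, and this holds uniformly for $\lev = 1, 2, 3$.

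There is no real obstacle here: all the work was done in the generalized lemma. There, the case $\RLift{2}{3}$ was the only new case relative to Lemma \ref{lem:embedding_S_to_SPL_general}, and it was resolved by observing that the boxed formulas among the translations are exactly the translations of implications. The only point I would double-check is that the instantiation with empty sets is legitimate. The generalized statement is quantified over arbitrary sets of propositional variables, including the empty set, so it is.
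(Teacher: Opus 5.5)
Your proposal is correct and is exactly the paper's proof: the paper also obtains this lemma by instantiating Lemma \ref{lem:embedding_D_to_DPL_general} with $\Phi_1 = \Phi_2 = \Psi = \emptyset$.
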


\begin{proof}
  Take $\Phi_1 = \Phi_2 = \Psi = \emptyset$ in Lemma \ref{lem:embedding_D_to_DPL_general}.
\end{proof}

\begin{thm} \label{thm:embedding_GentzenDPL_GentzenD}
  $\GentzenDPL \proves \Gamma \seq{\lev} \Delta$ iff $\GentzenD \proves \Gamma^\Vis \seq{\lev} \Delta^\Vis$.
\end{thm}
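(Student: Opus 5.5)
The theorem follows by combining the two directions already established, exactly as Theorem \ref{thm:embedding_GentzenSPL_GentzenS} was obtained from Lemmas \ref{lem:embedding_SPL_to_S} and \ref{lem:embedding_S_to_SPL}.

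For the left-to-right direction, I invoke Lemma \ref{lem:embedding_DPL_to_D}. That lemma is proved by induction on the height of a $\GentzenDPL$ proof-tree, and its cases need a brief check at level $3$. The structural and connective rules at level $3$ translate rule-for-rule into the same rules of $\GentzenD$. The rule $\RLift{2}{3}$ is handled by the case spelled out in that proof. In that case, Lemma \ref{lem:GD_box_to_Vis} is used before the lift and Lemma \ref{lem:GD_Vis_to_box} after it, with the cuts justified by Proposition \ref{prop:GentzenD_cut_elimination}. The level-$1$ and level-$2$ rules are handled as in Lemma \ref{lem:embedding_SPL_to_S}, using Lemmas \ref{lem:embedding_DPL_to_D_aux1} and \ref{lem:embedding_DPL_to_D_aux2}.

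For the right-to-left direction, I invoke Lemma \ref{lem:embedding_D_to_DPL}, which is the instance $\Phi_1=\Phi_2=\Psi=\emptyset$ of Lemma \ref{lem:embedding_D_to_DPL_general}. The point to confirm is that the height-preserving inversions used there, for $\RBoxL{2}{2}$ and $\RBoxGL{1}{1}$, are available in $\GentzenD$. They are, by Lemma \ref{lem:GentzenD_inversion}. One should also confirm that no level-$3$ implication rule can be the last rule, since no translated formula is an implication; this is exactly what the proof of Lemma \ref{lem:embedding_D_to_DPL_general} notes.

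I expect no real obstacle here, since both directions are stated earlier in the paper. The only delicate ingredient is the $\RLift{2}{3}$ case in each direction. Going forward, the asymmetry between Lemmas \ref{lem:GD_box_to_Vis} and \ref{lem:GD_Vis_to_box} is what forces propositional variables to be excluded from the succedent of $\RLift{2}{3}$. Going backward, boxed formulas among the translations are exactly translations of implications, together with the $\Box\Phi_2$ part. Both points are already handled in the cited lemmas, so the proof reduces to combining them.
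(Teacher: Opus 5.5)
Your proposal is correct and matches the paper's proof, which obtains the theorem by combining Lemma~\ref{lem:embedding_DPL_to_D} and Lemma~\ref{lem:embedding_D_to_DPL}; your extra checks (the $\RLift{2}{3}$ case in each direction and the absence of level-$3$ implication rules) only re-verify what the proofs of those lemmas already contain. One caution on the point you single out, a defect of the paper rather than of your argument: the $\Inv{\RImpR{\lev}{\lev}}$ half of Lemma~\ref{lem:GentzenD_inversion} is not literally height-preserving when $A \limp B$ was introduced by $\RWR{\lev}{\lev}$, because both $\RWL{\lev}{\lev}$ and $\RWR{\lev}{\lev}$ are then needed (e.g.\ $p \seq{1} p, q \limp r$ is one weakening above $\RAx{1}$, whereas $q, p \seq{1} p, r$ needs two), so the appeal to the induction hypothesis in the $\RBoxGL{1}{1}$ case of Lemma~\ref{lem:embedding_D_to_DPL_general} needs a small repair, for instance by inducting lexicographically on the maximal number of non-weakening inferences along a branch and then on the height, a measure whose first component both inversions preserve.
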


\begin{proof}
  This follows by combining Lemma \ref{lem:embedding_DPL_to_D} and Lemma \ref{lem:embedding_D_to_DPL}.
\end{proof}

Combining Theorem \ref{thm:embedding_GentzenDPL_GentzenD} with the properties of $\GentzenD$ (Propositions \ref{prop:GentzenD_provability} and \ref{prop:GentzenD_cut_elimination}), we immediately obtain the following corollaries.

\begin{cor} \label{cor:embedding_GentzenDPL_D}
  $\LogicDPL \proves A$ iff $\LogicD \proves A^\Vis$ for every $A \in \FmlPL$.
\end{cor}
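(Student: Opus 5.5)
We obtain the corollary by chaining the three equivalences already available, in the same way that Corollary \ref{cor:embedding_GentzenSPL_S} was obtained for $\LogicS$. By definition, $\LogicDPL \proves A$ means $\GentzenDPL \proves {}\seqIII A$, i.e.\ the level-$3$ sequent with empty antecedent and succedent $\{A\}$ is derivable in $\GentzenDPL$.

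First, apply Theorem \ref{thm:embedding_GentzenDPL_GentzenD} with $\Gamma = \emptyset$, $\Delta = \{A\}$ and $\lev = 3$. Since $\emptyset^\Vis = \emptyset$ and $\{A\}^\Vis = \{A^\Vis\}$, this gives that $\GentzenDPL \proves {}\seqIII A$ holds if and only if $\GentzenD \proves {}\seqIII A^\Vis$.

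Second, invoke clause 3 of Proposition \ref{prop:GentzenD_provability}. It gives that $\GentzenD \proves {}\seqIII A^\Vis$ holds if and only if $\LogicD \proves \lconj\emptyset \limp \ldisj\{A^\Vis\}$. By the conventions of Section \ref{sect:preliminaries}, this formula is $\top \limp A^\Vis$.

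Finally, $\top \limp A^\Vis$ and $A^\Vis$ are interderivable in $\LogicD$. This holds because $\LogicD$ contains all theorems of $\LogicGL$, hence all classical tautologies such as $(\top \limp B) \leftrightarrow B$, and $\LogicD$ is closed under modus ponens. Composing the three equivalences yields $\LogicDPL \proves A$ iff $\LogicD \proves A^\Vis$.

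There is no real obstacle here. The only points needing care are that the translation commutes with the empty antecedent and the singleton succedent, and the bookkeeping of $\lconj\emptyset = \top$. Cut-admissibility (Proposition \ref{prop:GentzenD_cut_elimination}) is not needed separately, since it is already absorbed into Theorem \ref{thm:embedding_GentzenDPL_GentzenD} through Lemmas \ref{lem:embedding_DPL_to_D_aux2} and \ref{lem:embedding_DPL_to_D}.
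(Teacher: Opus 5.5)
Your proof is correct and takes the same approach as the paper, which obtains this corollary by combining Theorem~\ref{thm:embedding_GentzenDPL_GentzenD} (at level $3$ with empty antecedent) with clause 3 of Proposition~\ref{prop:GentzenD_provability}. Your extra bookkeeping ($\lconj\emptyset = \top$, and $\top \limp A^\Vis$ being interderivable with $A^\Vis$ in $\LogicD$) only spells out what the paper leaves implicit, and you are right that cut-admissibility enters only through the theorem and the companion Corollary~\ref{cor:GentzenDPL_cut_elimination}.
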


\begin{cor}[Cut-admissibility of $\GentzenDPL$] \label{cor:GentzenDPL_cut_elimination}
  In $\GentzenDPL$, the cut rule $\RCut{\lev}{\lev}$ is admissible.
\end{cor}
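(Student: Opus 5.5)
The statement to prove is Corollary \ref{cor:GentzenDPL_cut_elimination}. I would derive it semantically-syntactically through the embedding, in parallel with Corollary \ref{cor:GentzenSPL_cut_elimination}, rather than attempt a direct cut-elimination inside $\GentzenDPL$.

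Suppose $\GentzenDPL + \RCut{\lev}{\lev} \proves \Gamma \seq{\lev} \Delta$. I would argue by induction on the number of applications of the cut rule in a proof-tree, always eliminating a topmost cut. So it suffices to treat a single cut whose two premises are cut-free derivable:
\[
\GentzenDPL \proves \Gamma_1 \seq{\lev} \Delta_1, A
\qquad\text{and}\qquad
\GentzenDPL \proves A, \Gamma_2 \seq{\lev} \Delta_2 .
\]

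First, apply Lemma \ref{lem:embedding_DPL_to_D} to both premises. This gives $\GentzenD \proves \Gamma_1^\Vis \seq{\lev} \Delta_1^\Vis, A^\Vis$ and $\GentzenD \proves A^\Vis, \Gamma_2^\Vis \seq{\lev} \Delta_2^\Vis$.

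Next, apply $\RCut{\lev}{\lev}$ in $\GentzenD$ with cut formula $A^\Vis$. By Proposition \ref{prop:GentzenD_cut_elimination} this yields $\GentzenD \proves \Gamma_1^\Vis, \Gamma_2^\Vis \seq{\lev} \Delta_1^\Vis, \Delta_2^\Vis$. Since translation commutes with union of sets, this is the sequent $(\Gamma_1,\Gamma_2)^\Vis \seq{\lev} (\Delta_1,\Delta_2)^\Vis$.

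Finally, apply Lemma \ref{lem:embedding_D_to_DPL}, the converse direction of Theorem \ref{thm:embedding_GentzenDPL_GentzenD}. This returns $\GentzenDPL \proves \Gamma_1, \Gamma_2 \seq{\lev} \Delta_1, \Delta_2$, and the derivation it produces is cut-free, since Lemma \ref{lem:embedding_D_to_DPL_general} only builds proofs from primitive rules of $\GentzenDPL$. Replacing the cut by this derivation lowers the number of cuts, and the induction closes.

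The only point needing care is the bookkeeping in the first step. Lemma \ref{lem:embedding_DPL_to_D} is stated for $\GentzenDPL$ without cut, so I apply it only to cut-free premises; this is exactly why the induction removes a topmost cut. This part is unproblematic. As the paper remarks for $\GentzenSPL$, the resulting argument is not a purely syntactic cut-elimination, because it relies on the semantic cut-admissibility of $\GentzenD$.
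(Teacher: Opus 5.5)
Your proposal is correct and follows the paper's own route. The paper obtains the corollary by combining Theorem~\ref{thm:embedding_GentzenDPL_GentzenD} with the cut-admissibility of $\GentzenD$ (Proposition~\ref{prop:GentzenD_cut_elimination}), exactly as you do by translating the premises, cutting on $A^\Vis$ in $\GentzenD$, and pulling the result back; your induction removing a topmost cut just makes explicit the bookkeeping the paper leaves implicit when it says the corollary follows immediately.
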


We note that the discussion on whether the embedding is purely syntactic is similar to that in Remark \ref{rem:syntacticality_embedding}.

\section{Concluding Remarks and Open Problems} \label{sect:conclusion}

In this paper, we examined the proof by Petrukhin \cite{Pet23} of the embedding of the propositional logic $\LogicSPL$ into Solovay's provability logic $\LogicS$, and gave a correct proof.
Furthermore, extending that proof, we proposed a new propositional logic $\LogicDPL$ by means of the sequent calculus for Japaridze's provability logic $\LogicD$ introduced by Kashima et al.\ \cite{KKIM25}, and proved the corresponding embedding as well.
In the rest of this section, we discuss some issues concerning $\LogicSPL$ and $\LogicDPL$ that were not addressed in this paper, together with directions for future work.

\subsection{Semantics}

We did not discuss the properties that the logics $\LogicSPL$ and $\LogicDPL$ have in their own right.
For instance, $\LogicFPL$ as a provability logic was already considered in Visser's original paper \cite{Vis81}.
Since $\LogicSPL$ and $\LogicDPL$ are the counterparts of $\LogicS$ and $\LogicD$ respectively,
it is natural to expect that this line of investigation makes sense for them as well.
Moreover, Petrukhin \cite{Pet23} also examines natural deduction and semantics for $\LogicSPL$.
In particular, as for semantics, he introduces a suitable counterpart of Visser's tail models \cite{Vis84}
on the propositional side and proves its completeness through the embedding.
From the perspective of Kripke semantics for propositional logics,
Visser's translation maps $p$ to $p \land \Box p$ precisely so that $\Box p$ restores the persistency%
\footnote{For a Kripke model $\langle W, R, \forces \rangle$, if $x \forces p$ and $x R y$ then $y \forces p$.}
of Kripke models.
On the other hand, we believe that it should also be possible to define semantics for $\LogicSPL$ directly
and to prove its completeness without going through the embedding, but so far we have not succeeded.
As for $\LogicD$, semantics has been defined by Beklemishev \cite{Bek89,Bek90} and by Kashima et al.\ \cite{KKIM25}.
Note that a discussion of the semantics of $\LogicDPL$ via the embedding would be possible, but we did not present it here.

We consider the problem of semantics to be rather important, for the following somewhat subtle reason.
Suppose that there is a propositional logic that corresponds (or \emph{should} correspond) to $\LogicS$ or $\LogicD$.
Then, for practical purposes, we consider that the justification of the correspondence should be witnessed by
some semantics, such as Kripke-style or arithmetical interpretations.
In this paper, we defined the logics $\LogicSPL$ and $\LogicDPL$ by sequent calculi,
and we cannot eliminate the possibility that they are, in a sense, ad hoc systems built solely in order to claim the embeddability.
In other words, it is conceivable that different logics $\LogicSPL'$ and $\LogicDPL'$,
obtained by slightly modifying the definitions of $\GentzenSPL$ and $\GentzenDPL$, can also be embedded into $\LogicS$ and $\LogicD$;
in such a case, one may ask what the difference between them really is, or object that they are \emph{merely} syntactically different.
If, on the other hand, the correspondence is matched precisely at the semantic level,
one could respond to such criticism with some substantial grounds.
For these reasons, we regard the semantic witnessing of $\LogicSPL$ and $\LogicDPL$ as a major problem for future work.

\subsection{Hilbert-style formulations}

Hilbert-style systems for $\LogicBPL$ and $\LogicFPL$ have been studied in \cite{SO97,IKK01}.
A natural question is then what Hilbert-style systems for $\LogicSPL$ and $\LogicDPL$ look like.
However, we defined $\LogicSPL$ and $\LogicDPL$ by sequent calculi, and moreover by sequent calculi with several levels of sequents.
Defining Hilbert-style systems seems to be far from easy.
In fact, we conjecture that no Hilbert-style system exists in the language with a single level of implication $\limp$.
If this conjecture is true, further questions arise:
what is a logic in a language equipped with several levels of implications $\limp^1$ and $\limp^2$,
and what do such implications mean?

\subsection{Syntactic cut-elimination}

As we noted in Remark \ref{rem:syntacticality_embedding},
the embeddability and the cut-admissibility of our systems for $\LogicSPL$ and $\LogicDPL$ rely
on the cut-admissibility of the sequent calculi for $\LogicS$ and $\LogicD$,
and the latter is established by semantic arguments, so our results are not purely syntactic.
It is still open to give a constructive cut-elimination algorithm for the sequent calculi $\GentzenSPL$ and $\GentzenDPL$.

\section*{Acknowledgements}

The author would like to thank Kohei Tominaga, Naoyuki Hatanaka, Haruka Kogure, and Taishi Kurahashi
for carefully reading the first draft of this paper
and providing valuable discussions and corrections of minor errors.

\subsection*{Use of AI}

The author declares that AI/LLMs were involved in the preparation of this paper.
While writing this draft, a review by Claude Fable pointed out a fatal error
in a part of the proof in Section \ref{sect:Embedding_GentzenSPL_GentzenS} as originally given.
This part was fixed with Claude, and the resulting arguments were verified by the author.
Any remaining errors are the sole responsibility of the author.

\bibliographystyle{plain}
\bibliography{refs}

\end{document}